\theoremstyle{remark}
\newtheorem*{notation*}{\protect\notationname}
\theoremstyle{plain}
\newtheorem{thm}{\protect\theoremname}[section]
\theoremstyle{definition}
\newtheorem{defn}[thm]{\protect\definitionname}
\theoremstyle{definition}
\newtheorem{example}[thm]{\protect\examplename}
\theoremstyle{remark}
\newtheorem{rem}[thm]{\protect\remarkname}
\theoremstyle{plain}
\newtheorem{lem}[thm]{\protect\lemmaname}
\theoremstyle{plain}
\newtheorem{prop}[thm]{\protect\propositionname}
\theoremstyle{definition}
\newtheorem{xca}[thm]{\protect\exercisename}
\theoremstyle{remark}
\newtheorem*{claim*}{\protect\claimname}
\theoremstyle{plain}
\newtheorem{cor}[thm]{\protect\corollaryname}
\theoremstyle{definition}
\newtheorem*{xca*}{\protect\exercisename}
\theoremstyle{remark}
\newtheorem{note}[thm]{\protect\notename}
 \theoremstyle{definition}
\providecommand{\claimname}{Claim}
\providecommand{\corollaryname}{Corollary}
\providecommand{\definitionname}{Definition}
\providecommand{\examplename}{Example}
\providecommand{\exercisename}{Exercise}
\providecommand{\lemmaname}{Lemma}
\providecommand{\notationname}{Notation}
\providecommand{\notename}{Note}
\providecommand{\propositionname}{Proposition}
\providecommand{\remarkname}{Remark}
\providecommand{\theoremname}{Theorem}
\global\long\def\om{\omega}%
\global\long\def\spec{\mathrm{Spec}}%
\global\long\def\ot{\otimes}%
\global\long\def\op{\oplus}%
\global\long\def\su{\subseteq}%
\global\long\def\A{\mathbb{A}}%
\global\long\def\F{\mathcal{F}}%
\global\long\def\C{\mathcal{C}}%
\global\long\def\Z{\mathbb{Z}}%
\global\long\def\Q{\mathbb{Q}}%
\global\long\def\P{\mathbb{P}}%
\global\long\def\proj{\mathrm{Proj}}%
\global\long\def\ua{^{\ast}}%
\global\long\def\da{_{\ast}}%
\global\long\def\G{\mathcal{G}}%
\global\long\def\H{\mathcal{H}}%
\global\long\def\hom{\mathrm{Hom}}%
\global\long\def\D{\mathcal{D}}%
\global\long\def\coh{\mathrm{Coh}}%
\global\long\def\qcoh{\mathrm{QCoh}}%
\global\long\def\tor{\mathrm{Tor}}%
\global\long\def\ord{\mathrm{ord}}%
\global\long\def\m{\mathfrak{m}}%
\global\long\def\ch{\mathrm{ch}}%
\global\long\def\td{\mathrm{td}}%
\global\long\def\E{\mathcal{E}}%
\begin{document}
\title{Grothendieck's proof of Hirzebruch-Riemann-Roch theorem}
\author{Giacomo Graziani}
\date{}
\maketitle
\begin{abstract}
The Riemann-Roch Theorem is one of the cornerstones of algebraic geometry,
connecting algebraic data (sheaf cohomology) with geometric ones (intersection
theory). This survey paper provides a self-contained introduction
and a complete proof of the Hirzebruch-Riemann-Roch (HRR) Theorem
for smooth projective varieties over an algebraically closed field.

Starting from the classical formulations for curves and surfaces,
we introduce the two modern tools necessary for the generalization:
the Grothendieck group $K_{0}(X)$ as the natural setting for the
Euler characteristic, and the Chow ring $A_{\bullet}(X)$ as the setting
for cycles and intersection theory. We then construct the fundamental
bridge between these two worlds\textemdash the Chern character ($\mathrm{ch}$)
and the Todd class ($\mathrm{td}$) \textemdash culminating in a full
proof of the HRR formula:

\[
\chi(X,\mathcal{E})=\int_{X}\mathrm{ch}(\mathcal{E})\cdot\mathrm{td}(X)
\]
We conclude by showing how this general formula recovers the classical
theorems for curves and surfaces.
\end{abstract}

\tableofcontents{}

\newpage{}

\section{Introduction}

Given an algebraic variety $X$, we can adopt (at least) two perspectives
in its study: the first one stems from the algebraic world, namely
Morita's theory, homological and commutative algebra which allow us
to identify $X$ with its category of coherent $\mathcal{O}_{X}$-modules.
On the other hand of course we have the geometry of $X$ which is
usually phrased in terms of property of particular embeddings (e.g.
in $\P^{n}$) or, and this is intrinsic in our variety, in terms of
its subvarieties and their mutual relations, that is, in its intersection
theory. The real interest in algebraic geometry comes from the interplay
between these points of view and the Hirzebruch-Riemann-Roch theorem
is one bright example of such a relation. 

The quest for this kind of relations has been ultimately started by
the work of Bernhard Riemann and Gustav Roch to address the problem
of computing the dimension of the space of meromorphic functions on
a compact Riemann surface with prescribed zeros and poles. On the
one side one finds that, given a divisor $D$\footnote{At this point, a divisor may be thought of as a compact way of describing
the zero (and polar) locus we're interested in. Namely here 
\[
D=\sum_{\mbox{finite}}n_{i}P_{i}
\]
is a formal sum where $P_{i}$ are points and $n_{i}\in\Z$. In this
case we simply define $\deg\left(D\right)=\sum n_{i}$.}, the relation is
\[
l\left(D\right)-l\left(K-D\right)=\deg\left(D\right)+1-g,
\]
where $l\left(D\right)$ is the dimension we were looking for, $K$
is a canonically defined divisor and $g$ is the genus (a topological
invariant). Phrased in more modern terms, one can associate to every
coherent $\mathcal{O}_{X}$-module $\F$ a sequence of finite dimensional vector
spaces $H^{i}\left(X,\F\right)$ and, under this perspective, the
integer $\ell\left(D\right)$ is just the dimension of $H^{0}\left(X,\mathcal{O}_{X}\left(D\right)\right)$.
Working harder one can show that 
\[
\ell\left(K-D\right)=\dim_{\Bbbk}H^{1}\left(X,\mathcal{O}_{X}\left(D\right)\right),
\]
this is a particular case of the Serre duality\footnote{For a proof of Riemann-Roch theorem in these terms, you can look at
\cite[Theorem IV.1.3, pag. 295]{Har} }. At this point one is lead to guess how a generalization to higher
dimensions looks like: the obvious generalization of the left hand
side is given by the (finite) sum
\[
\chi\left(X,\mathcal{O}_{X}\left(D\right)\right)=\sum_{i\ge0}\left(-1\right)^{i}\dim_{\Bbbk}H^{i}\left(X,\mathcal{O}_{X}\left(D\right)\right),
\]
this has the nice property of being additive on bounded exact sequences.
The tough part is to understand the right hand side. The two-dimensional
case, that is for smooth surfaces, was worked out essentially by Guido
Castelnuovo and can be written in cohomological terms as\footnote{For a proof, see \cite[Theorem V.1.6, pag. 362]{Har}.}
\[
\chi\left(S,\mathcal{O}_{S}\left(D\right)\right)=\frac{1}{2}D\cdot\left(D-K\right)+\chi\left(S,\mathcal{O}_{S}\right)
\]
where the $\cdot$ denotes the intersection product between divisors.
This can be seen as an evidence that the right hand side has to do
with the intersection theory of our variety, and this will be addressed
in Section \ref{sec:Intersection-theory}.

The main ideas are essentially two: in the first part we introduce
the Grothendieck group of an exact category, this will be a universal
group for ``functions'' which are additive on short exact sequences,
therefore it is the right source both for $\chi\left(X,\bullet\right)$
and for the so-called Chern character $\ch\left(\bullet\right)$, on the
other hand one can associate to $X$ a ring $A_{\bullet}\left(X\right)$
made up of classes of subvarieties modulo a suitable equivalence relation
and whose product is given essentially by intersecting such varieties.
One can wonder whether this can be further generalized, and the answer
is yes, but at the cost of another strong idea, which is ultimately
one of the cornerstones of Grothendieck's approach to geometry: the
focus of the theorem should be a morphism between two schemes rather
than a scheme itself. This leads to the general Grothendieck-Riemann-Roch
theorem and it is essentially the content of SGA6.

What we actually prove here is the so-called Hirzebruch-Riemann-Roch
theorem, although this name usually refers to the complex case, where
also transcendental methods are applied in the course of the proof. 
\begin{notation*}
Let us fix some notation and terminology: $\Bbbk$ will denote an algebraically
closed fields of arbitrary characteristic\footnote{Even if much of the work done here can be adapted with little effort
to arbitrary fields} and all geometric objects will be defined over $\Bbbk$. When we say
that $X$ is a variety, we mean that $X$ is a reduced and irreducible.
\end{notation*}
\newpage
\section{The $K$-theory}
\subsection*{Summary}

For the reader interested primarily in the proof of the Hirzebruch-Riemann-Roch Theorem, the technical construction of K-theory can be treated as a toolbox whose main properties we now outline.

To every smooth projective variety $X$, we associate an abelian group $K_0(X)$ generated by isomorphism classes of locally free sheaves (vector bundles). Since $X$ is smooth, any coherent sheaf admits a finite resolution by vector bundles; this fundamental result allows us to identify $K_0(X)$ with the group $G_0(X)$ generated by all coherent sheaves (Theorem \ref{thm:K0generatedbyprojective}). This group carries a commutative ring structure induced by the tensor product: $[\mathcal{F}] \cdot [\mathcal{G}] = [\mathcal{F} \otimes \mathcal{G}]$ (Remark \ref{rem:ringstructureprojectives}).

A crucial feature of the theory is its functoriality: any projective morphism $f: X \to Y$ induces a pushforward homomorphism $f_!: K_0(X) \to K_0(Y)$, defined via the alternating sum of higher direct images (Definition \ref{defn:fshriek}). This operation recovers classical invariants in specific cases: if $f$ is the map to a point, $f_![\mathcal{F}]$ is the Euler characteristic $\chi(X, \mathcal{F})$ (Example \ref{exa:eulercharacteristic}); if $i: Z \hookrightarrow X$ is a closed embedding, $i_!$ simplifies to the classical direct image $i_*$ (Proposition \ref{prop:fshriek}).

Finally, explicitly computing these groups is possible: for the projective space $\mathbb{P}^n$, the ring $K_0(\mathbb{P}^n)$ is $$K_0(\mathbb{P}^n)\simeq\frac{\Z [x]}{(1-x)^{n+1}}$$ where $x$ corresponds to $[\mathcal O _{\P ^n} (1)]$ (Theorem \ref{thm:K0projectivespace}).

\subsection{The Grothendieck group}

Here we introduce the basics about the Grothendieck group of rings
and schemes.
\begin{defn}
A category with exact sequences is a full additive subcategory $\mathcal{P}$
of an abelian category $\C$ which is closed under extensions, that
is, given an exact sequence 
\[
0\to P'\to P\to P''\to0
\]
in $\C$ with $P',P''$ in $\mathcal{P}$, then also $P$ is an object
of $\mathcal{P}$. An exact sequence in $\mathcal{P}$ is an exact
sequence in $\C$ involving only objects of $\mathcal{P}$.\footnote{One should also add some smallness condition $\mathcal{P}$: in \cite[Definition 3.1.1, pag.109]{Ros}
a category with exact sequences is supposed to have a small skeleton,
that is $\mathcal{P}$ is equivalent to a small category $\mathcal{P}_{0}$
(i.e. a category whose objects form a set).

We will ignore this kind of set theoretic issues and always suppose
that our categories have small skeleton: in fact, as long as we're
concerned with category of modules over a scheme or over a (possibly
non-commutative) ring, this condition is automatically satisfied (see
\cite[Examples 3.1.2 (2), (3), (7), pagg. 109-111]{Ros}, where he
also deals with other categories of modules.}
\end{defn}

\begin{example}
\label{exa:Our-main-examples}Our main examples of categories with
exact sequences will be 
\begin{itemize}
\item An abelian category $\C$ is a category with exact sequences as long
as it is equivalent to a small category.
\item For a ring $A$, let $\proj_{A}^{fg}$ be the category of finitely
generated projective $A$-modules and $\mathrm{Mod}_{A}^{fg}$ the
category of finitely generated $A$-modules: they are categories with
exact sequences, embedded in the category $\mathrm{Mod}_{A}$ of $A$-modules.
When $A$ is noetherian, the category $\mathrm{Mod}_{A}^{fg}$ is
itself abelian. Moreover $\proj_{A}^{fg}$ has the remarkable property
that every exact sequence is split.
\item For an algebraic variety $X$ (or, more generally, a scheme) the category
$\coh_{X}^{lf}$ of locally free coherent $\mathcal{O}_{X}$-modules, embedded
in the category $\qcoh_{X}$ of quasi-coherent $\mathcal{O}_{X}$-modules or
even in $\coh_{X}$ of coherent $\mathcal{O}_{X}$-modules when $X$ is locally
noetherian.
\end{itemize}
\end{example}

\begin{defn}
Fix a category with exact sequences $\mathcal{P}$ and let $S_{\mathcal{P}}$
be the set of isomorphism classes of objects of $\mathcal{P}$. The
Grothendieck group $K_{0}\left(\mathcal{P}\right)$ of $\mathcal{P}$
is defined as
\[
K_{0}\left(\mathcal{P}\right)=\frac{\Z\left[S_{\mathcal{P}}\right]}{R}
\]
where $\Z\left[S_{\mathcal{P}}\right]$ is the free abelian group
on $S_{\mathcal{P}}$ and $R$ is the subgroup generated by the relation
$\left[A\right]=\left[B\right]+\left[C\right]$ every time an exact
sequence of the form
\[
0\to B\to A\to C\to0
\]
exists in $\mathcal{P}$.
\end{defn}

\begin{example}
Let $A$ be a ring, we denote with $K_{0}\left(A\right)$ the group
$K_{0}\left(\mathrm{\proj}_{A}^{fg}\right)$ and by $G_{0}\left(A\right)$
the group $K_{0}\left(\mathrm{Mod}_{A}^{fg}\right)$. In the same
vein, given a projective variety $X$, we denote with $K_{0}\left(X\right)$
the group $K_{0}\left(\mathrm{Coh}_{X}^{lf}\right)$ and $G_{0}\left(X\right)$
the group $K_{0}\left(\coh_{X}\right)$ .
\end{example}

\begin{rem}
\label{rem:exactinducesmap}Let $\F:\C\to\D$ be an exact functor
between two categories with exact sequences, then one readily verifies
that we have a well defined group homomorphism
\begin{eqnarray*}
\F_{\ast}:K_{0}\left(\C\right) & \to & K_{0}\left(\D\right)\\
\left[X\right] & \mapsto & \left[\F\left(X\right)\right].
\end{eqnarray*}
This is a particular case of Example \ref{exa:Flowershriek} since an exact functor has vanishing higher derived functors ($R^i\mathcal{F}=0$ for $i>0$).
\end{rem}

\subsection{Dévissage theorem}
We see here how, in case the base variety or ring is regular, the $K$-groups we get considering coherent sheaves are the same that we get considering only locally free coherent sheaves.
\begin{lem}
\label{lem:alternatesumvanishes}Let $\mathcal{P}$ be a category
with exact sequences seen as a full subcategory of an abelian category
$\C$ and suppose that, for every epimorphism $\pi$ in $\mathcal{P}$,
$\ker\left(\pi\right)$ is still an object of $\mathcal{P}$. Let
\[
0\to X_{n}\to\dots\to X_{1}\to X_{0}\to0
\]
be an exact sequence in $\mathcal{P}$. Then the equality
\[
\sum_{i=0}^{n}\left(-1\right)^{i}\left[X_{i}\right]=0
\]
holds in $K_{0}\left(\mathcal{P}\right)$.
\end{lem}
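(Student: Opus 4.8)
The plan is to induct on the length $n$, reducing the general statement to the defining relation of $K_{0}(\mathcal{P})$, which is available only for short exact sequences whose three terms all lie in $\mathcal{P}$. The standing hypothesis on kernels of epimorphisms is what will let me peel short exact sequences off a long one while staying inside $\mathcal{P}$.

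For the base cases, when $n\le1$ the sequence is either $0\to X_{0}\to0$, forcing $[X_{0}]=0$, or an isomorphism $X_{1}\cong X_{0}$, so that $[X_{0}]-[X_{1}]=0$. These settle the claim for short sequences.

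For the inductive step with $n\ge2$, I would split off the right-hand end. Exactness at $X_{0}$ makes $X_{1}\to X_{0}$ an epimorphism, so by hypothesis $K:=\ker(X_{1}\to X_{0})$ is again an object of $\mathcal{P}$, and
\[
0\to K\to X_{1}\to X_{0}\to0
\]
is a short exact sequence in $\mathcal{P}$, giving $[X_{0}]-[X_{1}]=-[K]$. Using the identification $\operatorname{im}(X_{2}\to X_{1})=\ker(X_{1}\to X_{0})=K$, the map $X_{2}\to X_{1}$ factors as $X_{2}\twoheadrightarrow K\hookrightarrow X_{1}$, and one checks that
\[
0\to X_{n}\to\cdots\to X_{2}\to K\to0
\]
is exact in $\C$ with all its terms in $\mathcal{P}$, hence an exact sequence in $\mathcal{P}$ of length $n-1$. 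Applying the inductive hypothesis to it yields $[K]+\sum_{i=2}^{n}(-1)^{i-1}[X_{i}]=0$, i.e. $\sum_{i=2}^{n}(-1)^{i}[X_{i}]=[K]$. Combining this with $[X_{0}]-[X_{1}]=-[K]$ gives $\sum_{i=0}^{n}(-1)^{i}[X_{i}]=-[K]+[K]=0$, as desired.

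I expect the only delicate point to be the bookkeeping that certifies the truncated sequence as an exact sequence in $\mathcal{P}$: exactness at $K$ follows from $X_{2}\to K$ being an epimorphism, exactness at $X_{2}$ from $\ker(X_{2}\to K)=\ker(X_{2}\to X_{1})=\operatorname{im}(X_{3}\to X_{2})$, and exactness further to the left is inherited verbatim from the original sequence. The hypothesis that kernels of epimorphisms remain in $\mathcal{P}$ is precisely what makes $K$ admissible; without it neither the short exact sequence nor the truncation would even make sense inside $K_{0}(\mathcal{P})$.
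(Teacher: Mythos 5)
Your proof is correct and takes essentially the same route as the paper's: both induct on $n$ and split the long sequence at $K=\ker\left(X_{1}\to X_{0}\right)$ into the short exact sequence $0\to K\to X_{1}\to X_{0}\to0$ and the truncated sequence $0\to X_{n}\to\dots\to X_{2}\to K\to0$, then invoke the inductive hypothesis. Your write-up merely makes explicit the bookkeeping the paper leaves implicit (that $K$ lies in $\mathcal{P}$ by the kernel hypothesis, that the truncation is exact, and the sign arithmetic), which is a faithful expansion rather than a different argument.
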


\begin{proof}
Let us proceed by induction on $n$: the cases when $n\le2$ are clear.
Suppose the statement holds for $n-1$, then we can consider the two
exact sequences
\[
0\to K\to X_{1}\to X_{0}\to0
\]
\[
0\to X_{n}\to\dots\to X_{2}\to K\to0
\]
and apply the inductive hypothesis.
\end{proof}
\begin{lem}
\label{lem:K0classcomplexhomology}Let $\C$ be an abelian category
and let $X_{\bullet}$ be a bounded complex in $\C$. Then we have the
following equality in $K_{0}\left(\C\right)$
\[
\sum\left(-1\right)^{i}\left[X_{i}\right]=\sum\left(-1\right)^{i}\left[H_{i}\left(X_{\bullet}\right)\right].
\]
\end{lem}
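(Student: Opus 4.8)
The plan is to reduce the statement to repeated applications of the defining relation of $K_{0}\left(\C\right)$, by slicing the complex into short exact sequences built from its cycles, boundaries, and homology objects. Writing the differentials as $d_{i}\colon X_{i}\to X_{i-1}$, I would introduce the cycles $Z_{i}=\ker d_{i}$, the boundaries $B_{i}=\mathrm{im}\,d_{i+1}$, and recall that $H_{i}\left(X_{\bullet}\right)=Z_{i}/B_{i}$. Since $\C$ is abelian these all exist as genuine objects, and $B_{i}\su Z_{i}$ because $d_{i}\circ d_{i+1}=0$.

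The key observation is that every $X_{i}$ fits into two canonical short exact sequences. First, factoring $d_{i}$ through its image gives
\[
0\to Z_{i}\to X_{i}\to B_{i-1}\to0,
\]
since $X_{i}/Z_{i}\cong\mathrm{im}\,d_{i}=B_{i-1}$. Second, the definition of homology gives
\[
0\to B_{i}\to Z_{i}\to H_{i}\to0.
\]
Applying the relation in $K_{0}\left(\C\right)$ to each yields $\left[X_{i}\right]=\left[Z_{i}\right]+\left[B_{i-1}\right]$ and $\left[Z_{i}\right]=\left[B_{i}\right]+\left[H_{i}\right]$, hence
\[
\left[X_{i}\right]=\left[B_{i}\right]+\left[B_{i-1}\right]+\left[H_{i}\right].
\]

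It then remains to form the alternating sum over $i$ and check that the boundary contributions cancel. Multiplying by $\left(-1\right)^{i}$ and summing, the term $\sum_{i}\left(-1\right)^{i}\left[B_{i-1}\right]$ reindexes (via $j=i-1$) to $-\sum_{j}\left(-1\right)^{j}\left[B_{j}\right]$, which is exactly the negative of $\sum_{i}\left(-1\right)^{i}\left[B_{i}\right]$; these two sums cancel and leave $\sum_{i}\left(-1\right)^{i}\left[X_{i}\right]=\sum_{i}\left(-1\right)^{i}\left[H_{i}\right]$, as desired. Because the complex is bounded, only finitely many of the $X_{i},B_{i},Z_{i},H_{i}$ are nonzero, so all sums are finite and the reindexing is legitimate with no stray tail terms. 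I do not expect a genuine obstacle here; the only point requiring care is the bookkeeping of indices and fixing the homological convention $d_{i}\colon X_{i}\to X_{i-1}$ at the outset, so that $\mathrm{im}\,d_{i}$ is correctly identified with $B_{i-1}$ rather than $B_{i}$. One can also view Lemma \ref{lem:alternatesumvanishes} as the special case in which $X_{\bullet}$ is acyclic, so that every $\left[H_{i}\right]$ vanishes.
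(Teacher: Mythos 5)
Your proof is correct and is essentially the same as the paper's: both use the two canonical short exact sequences $0\to B_i\to Z_i\to H_i\to 0$ and $0\to Z_i\to X_i\to \mathrm{Im}(d_i)\to 0$, then cancel the boundary terms in the alternating sum by reindexing. Your concluding remark that Lemma \ref{lem:alternatesumvanishes} (for abelian $\C$) is the acyclic special case is a nice sanity check, though not needed for the argument.
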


\begin{proof}
Let $d_{i}:X_{i}\to X_{i-1}$ be the differentials, then we have exact
sequences
\[
0\to\mathrm{Im}\left(d_{i+1}\right)\to\ker\left(d_{i}\right)\to H_{i}\left(X_{\bullet}\right)\to0
\]
\[
0\to\ker\left(d_{i}\right)\to C_{i}\to\mathrm{Im}\left(d_{i}\right)\to0.
\]
We compute
\begin{eqnarray*}
\sum\left(-1\right)^{i}\left[H_{i}\left(X_{\bullet}\right)\right] & = & \sum\left(-1\right)^{i}\left[\ker\left(d_{i}\right)\right]-\sum\left(-1\right)^{i}\left[\mathrm{Im}\left(d_{i+1}\right)\right]\\
 & = & \sum\left(-1\right)^{i}\left[\ker\left(d_{i}\right)\right]+\sum\left(-1\right)^{i}\left[\mathrm{Im}\left(d_{i}\right)\right]\\
 & = & \sum\left(-1\right)^{i}\left[C_{i}\right].
\end{eqnarray*}
\end{proof}
\begin{thm}
\label{thm:K0generatedbyprojective}[Dévissage] Let $A$ be a regular ring\footnote{We say that a local ring $\left(A,\mathfrak{m},k\right)$ is a regular
local ring if it is noetherian and
\[
\dim A=\dim_{k}\frac{\mathfrak{m}}{\mathfrak{m}^{2}}
\]
We say that a (non necessarily local) ring $A$ is regular if it is
noetherian and for every prime ideal $\mathfrak{p}\in\spec\left(A\right)$,
the local ring $A_{\mathfrak{p}}$ is regular. }(resp. let $X$ be a smooth projective variety). Then the natural
inclusions $$K_{0}\left(A\right)\to G_{0}\left(A\right)\quad\quad\mbox{and}\quad\quad K_{0}\left(X\right)\to G_{0}\left(X\right)$$
are isomorphisms.\footnote{Look at the Resolution Theorem \cite[Theorem 3.1.13, pag. 121]{Ros}
(or the subsequent page for the $K_{1}$-analogue) for a generalization
of this result. In general one has a pletora of dévissage theorems
for reducing the study of a category to that of a well-behaved subcategory
by means of suitable exact sequences (e.g. resolutions).}
\end{thm}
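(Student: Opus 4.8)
The plan is to produce an explicit two-sided inverse to the inclusion map, exploiting the fact that regularity forces every object to admit a \emph{finite} resolution by the well-behaved subcategory. I will carry out the argument for the ring case, since the variety case is formally identical: one replaces $\proj_A^{fg}$ by $\coh_X^{lf}$ and $\mathrm{Mod}_A^{fg}$ by the abelian category $\coh_X$, and invokes Lemma \ref{lem:K0classcomplexhomology} and Lemma \ref{lem:alternatesumvanishes} in $\coh_X$ in place of $\mathrm{Mod}_A^{fg}$, once the geometric resolution statement is available.

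The one substantial input, and where I expect the real difficulty to lie, is the resolution property: over a regular (noetherian) ring every finitely generated module $M$ fits in a finite exact sequence
\[
0\to P_n\to\cdots\to P_1\to P_0\to M\to 0
\]
with each $P_i$ finitely generated projective. This is precisely the statement that a regular ring has finite global dimension, which rests on the Auslander--Buchsbaum--Serre characterization (a noetherian local ring is regular iff it has finite global dimension) together with $\mathrm{gl.dim}\,A=\sup_{\p}\mathrm{gl.dim}\,A_{\p}$. For the geometric version one argues on stalks: each $\mathcal O_{X,x}$ is a regular local ring of dimension $\dim X$, and since $X$ is projective every coherent sheaf is a quotient of a locally free sheaf (a finite sum of twists $\mathcal O_X(-m)$), so iterating kernels yields a locally free resolution whose $(\dim X)$-th syzygy is locally free, because over a regular local ring of dimension $d$ the $d$-th syzygy of any module is free. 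Promoting this pointwise freeness and the boundedness of the resolution to an honest global finite locally free resolution is the main obstacle.

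Granting this, the inclusion functor $\proj_A^{fg}\hookrightarrow\mathrm{Mod}_A^{fg}$ is exact, so by Remark \ref{rem:exactinducesmap} it already induces a homomorphism $\iota\colon K_0(A)\to G_0(A)$. Surjectivity is then immediate: given $[M]\in G_0(A)$, choose a finite projective resolution as above and regard it as a bounded complex in $\mathrm{Mod}_A^{fg}$ with homology concentrated in degree $0$ equal to $M$; Lemma \ref{lem:K0classcomplexhomology} gives $[M]=\sum_{i=0}^n(-1)^i[P_i]$ in $G_0(A)$, which lies in the image of $\iota$.

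For injectivity I would construct the inverse directly, setting $\Phi([M])=\sum_{i=0}^n(-1)^i[P_i]\in K_0(A)$ for a chosen finite projective resolution $P_\bullet\to M$, and then verify three points. For \emph{independence of the resolution}, given two resolutions I lift $\mathrm{id}_M$ to a chain map and pass to its mapping cone, a bounded acyclic complex whose terms are finitely generated projective; since the kernel of any epimorphism of finitely generated projectives is again finitely generated projective (the sequence splits), Lemma \ref{lem:alternatesumvanishes} applies to $\proj_A^{fg}$ and forces the alternating sum of cone terms to vanish in $K_0(A)$, whence the two alternating sums agree (a generalized Schanuel lemma would do as well). For \emph{additivity on short exact sequences} $0\to M'\to M\to M''\to 0$, the Horseshoe Lemma splices chosen resolutions of $M'$ and $M''$ into one of $M$ with $P_i=P_i'\oplus P_i''$, giving $\Phi([M])=\Phi([M'])+\Phi([M''])$ and showing $\Phi$ descends to $G_0(A)$. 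Finally, $\Phi\circ\iota=\mathrm{id}$ because a projective module is its own length-zero resolution, and $\iota\circ\Phi=\mathrm{id}$ by the surjectivity computation, so $\iota$ is an isomorphism.
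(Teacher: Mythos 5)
Your proposal is correct and follows essentially the same route as the paper: the key input is the existence of finite projective (resp.\ locally free) resolutions guaranteed by the Auslander--Buchsbaum--Serre theorem, combined with Lemma \ref{lem:alternatesumvanishes} and Lemma \ref{lem:K0classcomplexhomology} to identify $[M]$ with $\sum_i(-1)^i[P_i]$. If anything, your write-up is more complete than the paper's one-line argument, since you actually verify that the candidate inverse $[M]\mapsto\sum_i(-1)^i[P_i]$ is well defined (independence of the resolution via the mapping cone in $\proj_A^{fg}$, additivity via the Horseshoe Lemma), which is precisely the content the paper leaves implicit by deferring to the Resolution Theorem cited in its footnote.
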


\begin{proof}
Suppose $A$ is a regular ring and let $M$ be a finitely generated $A$-module, then, in view of Serre-Auslander-Buchsbaum's Theorem (see for example \cite[Theorem 8.62, page 491]{Rot}) there exists an exact sequence
\[
0\to P_{n}\to\dots\to P_{0}\to M\to0
\]
with $P_{i}$'s finitely generated and projective, hence the statement for $K_{0}\left(A\right)$ follows from Lemma \ref{lem:alternatesumvanishes}. For smooth projective varieties the proof follows the same pattern, except that the existence of a finite resolution made up of locally free coherent sheaves is a bit more delicate and follows from Serre-Auslander-Buchsbaum's Theorem combined with \cite[Exercise III.6.8, page 238]{Har}.
\end{proof}
From now on, when dealing with regular rings or varieties, we'll always identify the corresponding $K_{0}$ and $G_{0}$.
\begin{example}
\begin{enumerate}
\item Let $\Bbbk$ be a field, then two finitely generated $\Bbbk$-modules are isomorphic if and only if they have the same dimension. It follows that $K_{0}\left(\Bbbk\right)\simeq\Z$ induced by 
\[
V\mapsto\dim_{\Bbbk}V.
\]
\item More generally, let $A$ be a regular local ring, then all finitely generated projective modules are free and hence they have a well-defined rank. Like in the case of fields, and in view of Theorem \ref{thm:K0generatedbyprojective}, we deduce that $K_{0}\left(A\right)\simeq\Z$ induced by the rank function.
\end{enumerate}
\end{example}

\subsection{Multiplicative structure}

\begin{rem}\label{rem:ringstructureprojectives}
Let $R$ be any commutative ring. Then the assignment
\[
\left(\left[P\right],\left[Q\right]\right)\mapsto\left[P\ot_{R}Q\right]
\]
defines a commutative ring structure on $K_{0}\left(R\right)$. More generally if $X$ is a projective variety then $K_0 (X)$ becomes a commutative ring via
\[ [\F]\cdot [\G] = [\F\ot\G] .\]
If fact the definitions are well posed since locally free modules are flat, moreover associativity boils down to usual associativity for the tensor product.
\end{rem}

Diving deeper into homological algebra one could prove

\begin{prop}\label{prop:ringtor}
Let $R$ be a regular ring, then the assignment
\[
\left(\left[M\right],\left[N\right]\right)\mapsto\left[M\right]\cdot\left[N\right]=\sum_{i\ge0}\left(-1\right)^{i}\left[\tor_{i}^{R}\left(M,N\right)\right]
\]
makes $G_{0}\left(R\right)$ into a commutative ring such that the
inclusion $K_{0}\left(R\right)\to G_{0}\left(R\right)$ is a ring
isomorphism (cfr. Theorem \ref{thm:K0generatedbyprojective}
and Remark \ref{rem:ringstructureprojectives}).
\end{prop}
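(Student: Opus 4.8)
The plan is to verify that the formula descends to a well-defined biadditive pairing on $G_{0}(R)$ and then to identify it, via the isomorphism of Theorem~\ref{thm:K0generatedbyprojective}, with the tensor-product structure of Remark~\ref{rem:ringstructureprojectives}; all ring axioms would then be inherited for free rather than checked by hand.

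First I would check that the expression makes sense for fixed finitely generated modules $M$ and $N$. Since $R$ is regular it is in particular noetherian, so each $\tor_{i}^{R}(M,N)$ is again finitely generated; and since $M$ admits a finite projective resolution (Serre--Auslander--Buchsbaum, exactly as in the proof of Theorem~\ref{thm:K0generatedbyprojective}), the groups $\tor_{i}^{R}(M,N)$ vanish for $i$ large and the sum is finite. Next I would show the pairing is biadditive on classes. Given a short exact sequence $0\to M'\to M\to M''\to 0$, the long exact sequence of $\tor_{\bullet}^{R}(-,N)$ is a bounded exact sequence in $\mathrm{Mod}_{R}^{fg}$, a category to which Lemma~\ref{lem:alternatesumvanishes} applies (kernels of maps of finitely generated modules over a noetherian ring are again finitely generated). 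Taking the alternating sum of its terms yields $[M]\cdot[N]=[M']\cdot[N]+[M'']\cdot[N]$, and the symmetry $\tor_{i}^{R}(M,N)\cong\tor_{i}^{R}(N,M)$ gives additivity in the second variable as well; hence the pairing descends to a map $G_{0}(R)\times G_{0}(R)\to G_{0}(R)$ and is automatically commutative.

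The heart of the argument is a single computation. I would fix finite resolutions $P_{\bullet}\to M$ and $Q_{\bullet}\to N$ by finitely generated projectives. Since $\tor_{i}^{R}(M,N)=H_{i}(P_{\bullet}\ot_{R}N)$, Lemma~\ref{lem:K0classcomplexhomology} gives
\[
[M]\cdot[N]=\sum_{i}(-1)^{i}\big[H_{i}(P_{\bullet}\ot_{R}N)\big]=\sum_{i}(-1)^{i}[P_{i}\ot_{R}N].
\]
Because each $P_{i}$ is projective, hence flat, the complex $P_{i}\ot_{R}Q_{\bullet}$ stays exact and is therefore a finite projective resolution of $P_{i}\ot_{R}N$, so Lemma~\ref{lem:alternatesumvanishes} yields $[P_{i}\ot_{R}N]=\sum_{j}(-1)^{j}[P_{i}\ot_{R}Q_{j}]$. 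Substituting, I obtain the symmetric double sum
\[
[M]\cdot[N]=\sum_{i,j}(-1)^{i+j}[P_{i}\ot_{R}Q_{j}].
\]

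Finally I would read off the conclusion. Under the isomorphism $K_{0}(R)\xrightarrow{\sim}G_{0}(R)$ of Theorem~\ref{thm:K0generatedbyprojective} one has $[M]=\sum_{i}(-1)^{i}[P_{i}]$ and $[N]=\sum_{j}(-1)^{j}[Q_{j}]$, so the double sum above is precisely the product $[M]\cdot[N]$ computed with the tensor-product structure of Remark~\ref{rem:ringstructureprojectives}. Thus the $\tor$-pairing coincides with the already-established commutative ring structure on $K_{0}(R)$; associativity, distributivity and the existence of the unit $[R]$ are inherited, and the inclusion $K_{0}(R)\to G_{0}(R)$ is tautologically a ring isomorphism. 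I expect the only genuinely delicate point to be associativity, which a direct approach would force through a triple-$\tor$ spectral sequence or a balancing argument; transporting the structure from $K_{0}(R)$ sidesteps this entirely, so the real work is concentrated in the well-definedness of the middle step and in the flatness manipulation that produces the symmetric double sum.
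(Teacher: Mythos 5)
Your proof is correct and follows essentially the same route as the paper: the paper's proof consists of the one-line assertion that the Tor formula is what one obtains by transporting the tensor-product ring structure of $K_{0}(R)$ along the d\'evissage isomorphism of Theorem~\ref{thm:K0generatedbyprojective}, which is precisely the identification you establish. Your double-sum computation $[M]\cdot[N]=\sum_{i,j}(-1)^{i+j}\left[P_{i}\ot_{R}Q_{j}\right]$, obtained via Lemmas~\ref{lem:alternatesumvanishes} and~\ref{lem:K0classcomplexhomology}, together with the biadditivity check through the long exact sequence of $\tor$, is exactly the verification the paper leaves implicit (and partly relegates to the subsequent remark as an exercise).
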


\begin{proof}
This is the formula obtained transporting the ring structure of $K_0 (R)$ via the Dévissage isomorphism of Theorem \ref{thm:K0generatedbyprojective}. See \cite[Chapter V]{Ser} for a discussion of intersection multiplicities via Tor.
\end{proof}
\begin{rem}
I leave as an exercise to the willing reader to show directly that
this definition really endows $G_{0}\left(R\right)$ with a ring structure.
The only non trivial point to show is associativity.
\end{rem}

\subsection{Functoriality}

\begin{example}
\label{exa:Flowershriek}Let $\F:\C\to\D$ be a left exact additive
functor between two abelian categories and suppose that its right
derived functors $\left\{ R^{i}\F\right\} _{i\ge0}$ exist and that,
for every object $X$, we have $R^{i}\F\left(X\right)=0$ for $i\gg0$
(depending on $X$). Then the assignment 
\[
\F_{!}:K_{0}\left(\C\right)\to K_{0}\left(\D\right)
\]
given by
\[
\F_{!}\left[X\right]=\sum_{i\ge0}\left(-1\right)^{i}\left[R^{i}\F\left(X\right)\right].
\]
is a well defined group homomorphism: if $0\to X\to Y\to Z\to0$
is an exact sequence in $\mathcal C$ then we have an exact sequence
\[
0\to \mathcal{F}(X) \to \mathcal F (Y) \to \mathcal F (Z) \to R^1 \mathcal{F}(X) \to R^1 \mathcal{F}(Y) \to R^1 \mathcal{F}(Z)\to\dots
\]
that is finite by assumption. Since $\mathcal D$ is abelian it has kernels hence Lemma \ref{lem:alternatesumvanishes} tells that $$\F_{!}\left[Y\right]=\F_{!}\left[X\right]+\F_{!}\left[Z\right]$$ 

Our main example will be the following: let $f:X\to Y$ be a morphism between projective varieties, in particular it has finite relative dimension and $R^i f_\ast$ preserves coherence so the discussion above applies to $f_\ast$  hence $(f_\ast ) _{!}:G_{0}\left(X\right)\to G_{0}\left(Y\right)$ is well defined and, when $Y$ is smooth, it restricts to $K_0 (X)\to K_0 (Y)$ in view of Theorem \ref{thm:K0generatedbyprojective}.
\end{example}

\begin{defn}\label{defn:fshriek}
    Let $f:X\to Y$ be a morphism between projective varieties, we denote
    \[f_! = (f_\ast ) _{!}:G_{0}\left(X\right)\to G_{0}\left(Y\right) \]
    the map introduced in Example \ref{exa:Flowershriek}. When $Y$ is smooth with an abuse of notation we also denote 
    \[f_! :K_{0}\left(X\right)\to K_{0}\left(Y\right) \]
    the induced map.
\end{defn}

\begin{xca}
Let $f:A\to B$ be a (commutative) ring homomorphism. We have an induced
homomorphism 
\begin{align*}
K_{0}\left(A\right) & \to K_{0}\left(B\right)\\
\left[P\right] & \mapsto\left[P\ot_{A}B\right].
\end{align*}
Show that this works as long as we're working with projective modules.
What conditions are needed to extend this map to $G_{0}\left(A\right)\to G_{0}\left(B\right)$?
\end{xca}

We give now some examples

\begin{example}\label{exa:eulercharacteristic}
    Let $X$ be a projective variety and consider the map $f:X\to \{ \ast\}$ to a point. Clearly the category of coherent sheaves on a point is just the category of finite dimensional vector spaces and they are classified by their dimension, hence
    \[K_0 (\{ \ast\}) = G_0 (\{ \ast\})=\mathbb Z .\]
    Since $R^i f_\ast \F = H^i \left( X,\F\right)$ we have $f_!\F=\sum_{i\ge0} (-1)^i\dim _k H^i \left( X,\F\right)$ that is $$f_!\F = \chi\left(X,\F\right)$$ 
    is the Euler characteristic of $\F$.
\end{example}

\begin{prop}\label{prop:fshriek}
    Let $f:X\to Y$ be a closed embedding between two projective varieties with $Y$ smooth. If $\F$ is a coherent sheaf on $X$ then
    \begin{enumerate}
        \item $f_![\F]=[f_\ast \F]$
        \item $\chi\left( Y, j_![\F]\right)=\chi\left(X,[\F]\right)$
    \end{enumerate}
\end{prop}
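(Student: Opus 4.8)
The plan is to dispatch (1) by a direct sheaf-cohomology computation and then deduce (2) as a formal consequence via the Leray spectral sequence.

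For (1), I would start from the definition $f_![\F]=\sum_{i\ge 0}(-1)^i[R^i f_\ast\F]$ (Definition \ref{defn:fshriek}), so that the claim reduces to the vanishing $R^i f_\ast\F=0$ for all $i>0$. The crucial point I would invoke is that a closed embedding is an affine morphism: the preimage of an affine open $U=\spec(A)\su Y$ is $\spec(A/I)$ for the ideal $I$ cutting out $X$, hence again affine. Since $R^i f_\ast\F$ is the sheaf associated to $U\mapsto H^i(f^{-1}(U),\F)$ and higher cohomology of a quasi-coherent sheaf on an affine scheme vanishes (Serre vanishing), all the higher direct images vanish and $f_![\F]=[f_\ast\F]$. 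In other words, $f_\ast$ is exact along a closed embedding.

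For (2), I would write $g\colon Y\to\{\ast\}$ and $h\colon X\to\{\ast\}$ for the structure morphisms, noting that $h=g\circ f$. By Example \ref{exa:eulercharacteristic} the statement reads $g_!\bigl(f_![\F]\bigr)=h_![\F]$, i.e. the functoriality $g_!\circ f_!=(g\circ f)_!$ evaluated at $[\F]$. Using (1) it suffices to compare $\chi(Y,f_\ast\F)$ with $\chi(X,\F)$, and here I would bring in the Grothendieck--Leray spectral sequence $E_2^{p,q}=H^p(Y,R^q f_\ast\F)\Rightarrow H^{p+q}(X,\F)$. By the vanishing from (1) only the row $q=0$ survives, so the sequence degenerates and yields $H^p(X,\F)\cong H^p(Y,f_\ast\F)$ for every $p$; taking alternating sums of dimensions then gives the desired equality of Euler characteristics.

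The hard part is the vanishing $R^i f_\ast\F=0$ for $i>0$, which is the only genuinely nontrivial ingredient and rests on affineness of closed embeddings together with Serre's vanishing theorem. Once that is secured, (1) is immediate and (2) follows formally; in fact one could replace the spectral sequence in (2) by the general composition law $(g\circ f)_!=g_!\circ f_!$ for projective pushforwards, of which this is the special case when the target is a point, but the degeneration argument seems the most economical given what has been set up so far.
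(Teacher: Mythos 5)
Your proposal is correct and follows essentially the same route as the paper: both rest on the vanishing $R^if_\ast\F=0$ for $i>0$ (the paper cites \cite[Proposition III.8.1, pag. 250]{Har}, you justify it via affineness of closed embeddings plus vanishing of quasi-coherent cohomology on affines), and both then identify $\chi(Y,f_\ast\F)$ with $\chi(X,\F)$, the paper implicitly via the extension-by-zero description and you explicitly via the degenerate Leray spectral sequence.
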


\begin{proof}If $\F$ is a coherent sheaf on $X$ then, since $R^i f_\ast \F$ is the sheaf associated with the presheaf
\[U\mapsto H^i \left( f^{-1} \left( U\right),\F_{\vert f^{-1} \left( U\right)}\right),\]
we see that $R^i f_\ast \F =0$ for $i\ge1$ and $f_\ast \F$ is just extension by 0 outside the image of $f$ (see \cite[Proposition III.8.1, pag. 250]{Har}). Then
\[ f_![\F]=[f_\ast \F] \]
for every coherent sheaf $\F$ on $X$. In particular we compute:
\[\chi\left( Y, j_![\F]\right)= \sum_{i\ge0}\left(-1\right)^{i}\chi\left(Y ,\left[R^{i}j_\ast \F\right]\right) = \chi\left(Y ,\left[j_\ast \F\right]\right) = \chi\left(X ,\left[ \F\right]\right) .\]
\end{proof}

\subsection{An important example: the Grothendieck group of projective spaces}
\begin{lem} \label{lem:chiO(a)}
Let $1\le a\le n$ be an integer and $\Bbbk$ be
a field, then 
$$\chi\left(\P_{\Bbbk}^{n},\mathcal{O}_{\P_{\Bbbk}^{n}}\left(-a\right)\right)=0 \quad\mbox{and}\quad \chi\left(\P_{\Bbbk}^{n},\mathcal{O}_{\P_{\Bbbk}^{n}}\left(a\right)\right)=\binom{n+a}{n} .$$
\end{lem}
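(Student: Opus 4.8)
The plan is to turn the problem into a recursion in the dimension $n$ and the twist $d$, driven by the fact that the Euler characteristic $\chi(\mathbb{P}^n,-)$ is the pushforward to a point, hence additive on short exact sequences. Writing $\chi_n(d)=\chi(\mathbb{P}^n_\Bbbk,\mathcal{O}(d))$, I would first choose a linear form cutting out a hyperplane $H\cong\mathbb{P}^{n-1}$ and twist its ideal-sheaf sequence by $\mathcal{O}(d)$ to obtain
\[
0\to\mathcal{O}_{\mathbb{P}^n}(d-1)\to\mathcal{O}_{\mathbb{P}^n}(d)\to\mathcal{O}_H(d)\to0,
\]
in which $\mathcal{O}_H(d)$ is the pushforward of $\mathcal{O}_{\mathbb{P}^{n-1}}(d)$ along the closed embedding $H\hookrightarrow\mathbb{P}^n$. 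Since $\chi(\mathbb{P}^n,-)$ is the pushforward $f_!$ to a point (Example~\ref{exa:eulercharacteristic}), hence a group homomorphism, and a closed embedding preserves Euler characteristics (Proposition~\ref{prop:fshriek}), this sequence yields the recursion $\chi_n(d)=\chi_n(d-1)+\chi_{n-1}(d)$.

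Next I would prove the uniform closed formula $\chi_n(d)=\binom{n+d}{n}$ for all $n\ge0$ and all $d\in\mathbb{Z}$, reading $\binom{n+d}{n}$ as the degree-$n$ polynomial $\tfrac{1}{n!}\prod_{j=1}^{n}(d+j)$, by induction on $n$. The base case $n=0$ is immediate, as $\mathbb{P}^0$ is a point and $\chi_0(d)=1=\binom{d}{0}$. For the inductive step, the recursion gives $\chi_n(d)-\chi_n(d-1)=\chi_{n-1}(d)=\binom{n+d-1}{n-1}$, whereas Pascal's rule gives $\binom{n+d}{n}-\binom{n+d-1}{n}=\binom{n+d-1}{n-1}$; hence $\chi_n(d)-\binom{n+d}{n}$ is constant in $d$.

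The one genuinely geometric ingredient, and the step I expect to be the main obstacle, is pinning down this $d$-independent constant. I would evaluate at $d=0$, which reduces matters to $\chi_n(0)=\chi(\mathbb{P}^n,\mathcal{O})=1$, i.e. to $H^0(\mathbb{P}^n,\mathcal{O})=\Bbbk$ together with the vanishing $H^i(\mathbb{P}^n,\mathcal{O})=0$ for $i>0$ (\cite[Theorem III.5.1]{Har}); everything else in the argument is the formal recursion and Pascal's rule. Granting this, the constant equals $1-\binom{n}{n}=0$ and the closed formula holds for every $d$.

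Finally I would specialize to $1\le a\le n$. The positive assertion is the formula itself, $\chi_n(a)=\binom{n+a}{n}$. For the negative one, $\chi_n(-a)=\binom{n-a}{n}$, and since $0\le n-a\le n-1<n$ this binomial coefficient vanishes, giving $\chi_n(-a)=0$. As a sanity check one can instead read both claims directly off the cohomology of $\mathcal{O}(d)$: for $a\ge0$ only $H^0$ contributes, of dimension $\binom{n+a}{n}$, while for $1\le a\le n$ one has $H^0(\mathcal{O}(-a))=0$, vanishing middle cohomology, and $H^n(\mathcal{O}(-a))=0$ because $-a>-n-1$, so in fact all cohomology groups vanish.
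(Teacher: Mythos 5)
Your proof is correct, but it takes a genuinely different route from the paper's. The paper argues directly from the computation of the cohomology of line bundles on projective space (\cite[Theorem III.5.1, page 225]{Har}): for $\mathcal{O}_{\P^{n}}\left(-a\right)$ only $H^{0}$ and $H^{n}$ can be nonzero, $H^{0}$ vanishes because $-a<0$, and $H^{n}$ is killed by Serre duality since $\omega_{\P^{n}}\simeq\mathcal{O}_{\P^{n}}\left(-n-1\right)$ and $a-n-1<0$; the positive twist is handled the same way, with $\dim H^{0}=\binom{n+a}{n}$. Your argument is instead the classical Hilbert-polynomial recursion: the hyperplane ideal-sheaf sequence, together with additivity of $\chi$ (Example \ref{exa:eulercharacteristic}) and invariance of $\chi$ under closed-embedding pushforward (Proposition \ref{prop:fshriek}), gives $\chi_{n}\left(d\right)=\chi_{n}\left(d-1\right)+\chi_{n-1}\left(d\right)$, and induction on $n$ with Pascal's rule yields the uniform identity $\chi_{n}\left(d\right)=\binom{n+d}{n}$ for \emph{all} $d\in\Z$, read as the polynomial $\frac{1}{n!}\prod_{j=1}^{n}\left(d+j\right)$, from which both assertions follow at once (the vanishing because $d=-a$ is a root of that polynomial when $1\le a\le n$). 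There is no circularity in your citations: neither Example \ref{exa:eulercharacteristic} nor Proposition \ref{prop:fshriek} depends on this lemma, which is only used later in Theorem \ref{thm:K0projectivespace}. What your route buys: it avoids Serre duality entirely, needs only the single cohomological input $\chi\left(\P^{n},\mathcal{O}_{\P^{n}}\right)=1$ (a small piece of \cite[Theorem III.5.1]{Har}) rather than the full description of all $H^{i}\left(\P^{n},\mathcal{O}\left(d\right)\right)$, and proves a stronger, uniform statement for every twist. What the paper's route buys: given the cited machinery it is shorter, and it establishes the stronger sheaf-level fact that every individual cohomology group of $\mathcal{O}_{\P^{n}}\left(-a\right)$ vanishes for $1\le a\le n$, not merely their alternating sum --- a fact your closing sanity check recovers by essentially reproducing the paper's argument.
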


\begin{proof}
In view of \cite[Theorem III.5.1, page 225]{Har} we have
\begin{eqnarray*}
\chi\left(\P_{\Bbbk}^{n},\mathcal{O}_{\P_{\Bbbk}^{n}}\left(-a\right)\right) & = & \dim_{\Bbbk}H^{0}\left(\P_{\Bbbk}^{n},\mathcal{O}_{\P_{\Bbbk}^{n}}\left(-a\right)\right)+\left(-1\right)^{n}\dim_{\Bbbk}H^{n}\left(\P_{\Bbbk}^{n},\mathcal{O}_{\P_{k}^{n}}\left(-a\right)\right)\\
 & = & \pm\dim_{\Bbbk}H^{n}\left(\P_{\Bbbk}^{n},\mathcal{O}_{\P_{\Bbbk}^{n}}\left(-a\right)\right).
\end{eqnarray*}
Since $\omega_{\P_{\Bbbk}^{n}}\simeq\mathcal{O}_{\P_{\Bbbk}^{n}}\left(-n-1\right)$,
by Serre duality\footnote{For us, the statement of Serre duality boils down to the following:
let $X$ be a smooth projective variety over a field $\Bbbk$ and let
$d=\dim X$. Denote with $\omega_{X}$ the canonical sheaf of $X$
(that is, the determinant of the sheaf of 1-forms) then, for every
coherent locally free $\mathcal{O}_{X}$-module $\F$ and for every $i\ge0$,
we have
\[
\dim_{\Bbbk}H^{i}\left(X,\F\right)=\dim_{\Bbbk}H^{d-i}\left(X,\F^{\vee}\ot\omega_{X}\right)
\]
where $\F^{\vee}=\mathcal{H}om_{O_{X}}\left(\F,\mathcal{O}_{X}\right)$ denotes
the dual sheaf. See \cite[Section III.7]{Har}, in particular \cite[Corollary III.7.7, page 244]{Har}.}we get
\[
\dim_{\Bbbk}H^{n}\left(\P_{\Bbbk}^{n},\mathcal{O}_{\P_{\Bbbk}^{n}}\left(-a\right)\right)=\dim_{\Bbbk}H^{0}\left(\P_{\Bbbk}^{n},\mathcal{O}_{\P_{\Bbbk}^{n}}\left(a-n-1\right)\right)=0
\]
as $a-n-1<0$. The same proof shows also the second statement.
\end{proof}
\begin{thm}
\label{thm:K0projectivespace}Let $\Bbbk$ be a field, then we have a
ring isomorphism
\[
K_{0}\left(\P_{\Bbbk}^{n}\right)\simeq\frac{\Z\left[x\right]}{\left(1-x\right)^{n+1}}
\]
with $x$ corresponding to the class of $\mathcal{O}_{\P_{\Bbbk}^{n}}\left(1\right)$.
\end{thm}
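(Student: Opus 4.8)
The plan is to construct an explicit ring homomorphism $\bar\phi\colon\Z[x]/(1-x)^{n+1}\to K_{0}(\P^{n}_{\Bbbk})$ and to prove it is bijective; throughout I identify $K_{0}(\P^{n}_{\Bbbk})=G_{0}(\P^{n}_{\Bbbk})$ by Theorem \ref{thm:K0generatedbyprojective} and use the ring structure of Remark \ref{rem:ringstructureprojectives}. First I would define a ring homomorphism $\phi\colon\Z[x]\to K_{0}(\P^{n}_{\Bbbk})$ by $x\mapsto[\mathcal O_{\P^{n}_{\Bbbk}}(1)]$; this is legitimate since $\Z[x]$ is the free commutative ring on one generator, and $[\mathcal O(1)]$ is a unit with inverse $[\mathcal O(-1)]$ because $\mathcal O(1)\ot\mathcal O(-1)\simeq\mathcal O$. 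Writing $x^{d}=[\mathcal O(d)]$ for every $d\in\Z$, the whole of $K_{0}$ will turn out to be generated by these classes, so that $\phi$ is a good candidate.

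The first substantive step is to check that $(1-x)^{n+1}\in\ker\phi$. For this I would invoke the Koszul resolution attached to the homogeneous coordinates $x_{0},\dots,x_{n}$, regarded as the components of a surjection $\mathcal O(-1)^{\oplus(n+1)}\to\mathcal O$; as these sections have no common zero it produces the exact sequence of coherent sheaves
\[
0\to\mathcal O(-n-1)\to\cdots\to\mathcal O(-i)^{\oplus\binom{n+1}{i}}\to\cdots\to\mathcal O(-1)^{\oplus(n+1)}\to\mathcal O\to0 .
\]
Applying Lemma \ref{lem:alternatesumvanishes} in $\coh_{\P^{n}_{\Bbbk}}$ gives $\sum_{i=0}^{n+1}(-1)^{i}\binom{n+1}{i}[\mathcal O(-i)]=0$, i.e. $(1-x^{-1})^{n+1}=0$; multiplying by the unit $x^{n+1}$ yields $(1-x)^{n+1}=0$. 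Hence $\phi$ descends to $\bar\phi$, and in the quotient ring $x$ remains a unit, so $x^{d}$ lies in its image for every $d\in\Z$.

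For surjectivity I would use that every coherent sheaf on $\P^{n}_{\Bbbk}$ admits a finite resolution by finite direct sums of line bundles $\mathcal O(d)$ (Hilbert's syzygy theorem for the graded ring $\Bbbk[x_{0},\dots,x_{n}]$, sheafified along Serre's correspondence). Applying Lemma \ref{lem:alternatesumvanishes} once more, the class of any coherent sheaf becomes a $\Z$-linear combination of the $[\mathcal O(d)]=x^{d}$, all of which are in the image of $\bar\phi$; thus $\bar\phi$ is onto. I regard this syzygy statement as the one genuinely external input to the argument.

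Injectivity is where I expect the real conceptual work. The quotient ring is free abelian with basis $1,x,\dots,x^{n}$, which $\bar\phi$ sends to $[\mathcal O],[\mathcal O(1)],\dots,[\mathcal O(n)]$, so it suffices to manufacture a group homomorphism out of $K_{0}$ separating these $n+1$ classes. I would take the Hilbert--Euler functional $P\colon K_{0}(\P^{n}_{\Bbbk})\to\mathrm{Map}(\Z,\Z)$ into the additive group of integer-valued functions, $[\mathcal F]\mapsto\bigl(m\mapsto\chi(\P^{n}_{\Bbbk},\mathcal F\ot\mathcal O(m))\bigr)$; this is well defined because twisting by the locally free $\mathcal O(m)$ is exact and $\chi$ is additive on short exact sequences. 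By Lemma \ref{lem:chiO(a)} the function $P([\mathcal O(i)])$ is the polynomial $q_{i}(m)=\binom{n+m+i}{n}=q(m+i)$ with $q(m)=\binom{n+m}{n}$ of degree exactly $n$. A relation $\sum_{i=0}^{n}c_{i}[\mathcal O(i)]\in\ker P$ then reads $g(E)q=0$, where $E$ is the shift $f(m)\mapsto f(m+1)$ and $g=\sum c_{i}t^{i}$ has degree $\le n$; factoring $g(t)=(t-1)^{k}h(t)$ with $h(1)\ne0$ and $k\le\deg g\le n$, the operator $(E-1)^{k}$ lowers degree by exactly $k$ while $h(E)$ preserves the top degree, so $g(E)q$ is a nonzero polynomial of degree $n-k$ unless $g=0$. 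Hence the $P([\mathcal O(i)])$ are linearly independent, $P\circ\bar\phi$ is injective, and so is $\bar\phi$, completing the proof. The main obstacle is exactly this separation: one must produce enough numerical invariants to force $K_{0}$ to be free of rank $n+1$, and the Euler characteristics of the twists $\mathcal O(i)$ supply precisely $n+1$ independent ones.
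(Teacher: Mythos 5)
Your proof is correct, and its skeleton --- the homomorphism $x\mapsto[\mathcal{O}_{\P_{\Bbbk}^{n}}(1)]$, the Koszul relation killing $(1-x)^{n+1}$, surjectivity via resolutions by sums of line bundles, injectivity via Euler characteristics --- matches the paper's; the genuine divergence is in \emph{how} injectivity, i.e.\ the linear independence of $[\mathcal{O}],[\mathcal{O}(1)],\dots,[\mathcal{O}(n)]$, is established. The paper uses a triangularity trick internal to the ring: given a relation $\sum_{i\le j}a_{i}\xi^{i}=0$ with $a_{j}\neq0$, it multiplies by the unit $\xi^{-j}$ and applies the single functional $\chi(\P^{n},\bullet)$, so that by the vanishing $\chi(\P^{n},\mathcal{O}(-a))=0$ for $1\le a\le n$ (Lemma \ref{lem:chiO(a)}) every term dies except $a_{j}\chi(\P^{n},\mathcal{O}_{\P^{n}})=a_{j}$; this needs nothing beyond the finitely many values Lemma \ref{lem:chiO(a)} actually provides. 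You instead package all twists into the Hilbert--Euler functional $P$ with values in $\mathrm{Map}(\Z,\Z)$ and run a difference-operator degree argument on the shifted polynomials $q(m+i)$; this is a valid and more robust alternative (it exhibits $n+1$ independent numerical invariants rather than exploiting a fortunate vanishing range), but note that as written it uses $\chi(\P^{n},\mathcal{O}(d))=\binom{n+d}{n}$ for \emph{all} $d\in\Z$, which is more than Lemma \ref{lem:chiO(a)} literally states. The overreach is harmless: either prove the full formula by the same Serre-duality computation, or restrict your identity $g(E)q=0$ to $m\ge0$, where it follows from the vanishing of higher cohomology of non-negative twists, and use that a polynomial vanishing at all non-negative integers is zero. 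Two smaller differences: for the kernel you take the Koszul resolution with negative twists and multiply by the unit $x^{n+1}$, whereas the paper dualizes the Koszul complex and obtains $(1-\xi)^{n+1}=0$ directly; and for surjectivity you invoke Hilbert's syzygy theorem to get finite resolutions by sums of line bundles, which is in fact the honest form of what the paper compresses into its citation of \cite[Corollary II.5.18]{Har} --- a one-step quotient by itself does not give generation of $G_{0}$ by line-bundle classes; one needs the resolution to terminate, which is exactly the syzygy input you make explicit.
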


\begin{proof}
Let us set $X=\P_{\Bbbk}^{n}$ and let $\phi:\Z\left[x\right]\to K_{0}\left(\P_{\Bbbk}^{n}\right)$
be the map defined by 
\[
\phi\left(x^{t}\right)=\left[\mathcal{O}_{\P_{\Bbbk}^{n}}\left(t\right)\right].
\]
Note that it is a ring homomorphism. Let us see that $\phi$ is surjective:
in view of \cite[Corollary II.5.18, page 121]{Har} we see that $K_{0}\left(\P_{\Bbbk}^{n}\right)$
is generated by classes of line bundles $\left[\mathcal{O}_{\P_{\Bbbk}^{n}}\left(m\right)\right]$ for $m\in\Z$.
To prove surjectivity of $\phi$ we need to show that $\left[\mathcal{O}_{\P_{\Bbbk}^{n}}\left(-1\right)\right]$
lies in its image: consider now $\Bbbk\left[T_{0},\dots,T_{n}\right]$
as a graded ring: the sequence $T_{0},\dots,T_{n}$ is clearly regular
and hence, then, dualizing the Koszul complex associated with it,
we get an exact sequence
\[
0\to\mathcal{O}_{\P_{\Bbbk}^{n}}\to\bigoplus_{i=1}^{n+1}\mathcal{O}_{\P_{\Bbbk}^{n}}\left(1\right)\to\bigoplus_{i=1}^{{n+1 \choose 2}}\mathcal{O}_{\P_{\Bbbk}^{n}}\left(2\right)\to\dots\to\mathcal{O}_{\P_{\Bbbk}^{n}}\left(n+1\right)\to0.
\]
In view of Lemma \ref{lem:alternatesumvanishes}, denoting with $\xi=\left[\mathcal{O}_{\P_{\Bbbk}^{n}}\left(1\right)\right]$,
we have 
\begin{eqnarray*}
0 & = & 1-\left(n+1\right)\xi+{n+1 \choose 2}\xi^{2}+\dots+\left(-1\right)^{n+1}\xi^{n+1}\\
 & = & \left(1-\xi\right)^{n+1}
\end{eqnarray*}
which shows in turn that $\phi$ factors through 
\[
\tilde{\phi}:\frac{\Z\left[x\right]}{\left(1-x\right)^{n+1}}\to K_{0}\left(\P_{\Bbbk}^{n}\right)
\]
and also that 
\[
\left[\mathcal{O}_{\P_{\Bbbk}^{n}}\left(-1\right)\right]=\phi\left(\left(-1\right)^{n+1}\xi^{n}+\dots+{n+1 \choose 2}\xi-\left(n+1\right)\right).
\]
In particular $\tilde{\phi}$ is surjective and $K_{0}\left(\P_{\Bbbk}^{n}\right)$
is generated by the classes
\[
\left[\mathcal{O}_{\P_{\Bbbk}^{n}}\right],\left[\mathcal{O}_{\P_{\Bbbk}^{n}}\left(1\right)\right],\dots,\left[\mathcal{O}_{\P_{\Bbbk}^{n}}\left(n\right)\right],
\]
that is, by $1,\xi,\dots,\xi^{n}$. To conclude, we show that $1,\xi,\dots,\xi^{n}$ are linearly independent over $\mathbb{Z}$. Let
\[
\alpha=\sum_{i=0}^{n}a_{i}\xi^{i}=0
\]
with $\left(a_{0},\dots,a_{n}\right)\neq\left(0,\dots,0\right)$.
Set $j=\max\left\{ i\,\mid\,a_{i}\neq0\right\} $, then
\[
0=\alpha\cdot\xi^{-j}=a_{0}\left[\mathcal{O}_{\P_{\Bbbk}^{n}}\left(-j\right)\right]+\dots+a_{j}\left[\mathcal{O}_{\P_{\Bbbk}^{n}}\right],
\]
but then Example \ref{exa:eulercharacteristic} and Lemma \ref{lem:chiO(a)}
shows that
\[
0=\chi\left(\P_{\Bbbk}^{n},\alpha\cdot\xi^{-j}\right)=a_{j}\cdot\chi\left(\P_{\Bbbk}^{n},\mathcal{O}_{\P_{\Bbbk}^{n}}\right)=a_{j}
\]
which is a contradiction. 
\end{proof}
\newpage
\section{Intersection theory\label{sec:Intersection-theory}}

Throughout this section, unless otherwise specified, $X$ will denote a smooth projective variety over an algebraically closed field $\Bbbk$.  The smoothness assumption is fundamental as it allows us to identify Weil and Cartier divisors (enabling the intersection product defined in Section \ref{sub:intersection}) and endows the Chow group $A_\bullet(X)$ with a ring structure. The assumption on $\Bbbk$ simplifies the definition of the degree map (Remark \ref{rem:degree}).

\subsection{The Chow groups}
\begin{lem}
\label{lem:orderfunction}Let $A$ be a local noetherian 1-dimensional
domain with fraction field $K$. For any $a\in A\backslash\left\{ 0\right\} $,
set
\[
\ord_{A}\left(a\right)=\ell_{A}\left(\frac{A}{aA}\right).
\]
Then $\ord_{A}$ gives a well-defined group homomorphism $K^{\ast}\to\Z$.
\end{lem}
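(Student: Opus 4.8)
The plan is to proceed in three stages: first confirm that $\ord_A$ takes finite values, then establish multiplicativity on $A\setminus\{0\}$, and finally extend the definition to all of $K^*$ and verify it is well defined.

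First I would check that $\ell_A(A/aA)<\infty$ for every nonzero $a$, since otherwise the formula would not even make sense. As $A$ is a noetherian domain of Krull dimension $1$, its only prime ideals are $(0)$ and $\m$. For $a\neq 0$ the ideal $aA$ is nonzero, so $(0)$ cannot contain it, and therefore $\m$ is the only prime of $A$ lying over $aA$. Hence $A/aA$ is a noetherian ring of dimension $0$, that is, an Artinian ring, and an Artinian ring has finite length as a module over itself. This yields $\ord_A(a)=\ell_A(A/aA)<\infty$. I expect this step to be the main obstacle, in the sense that it is the only place where the $1$-dimensionality hypothesis is genuinely used; the remaining steps are formal once finiteness is in hand.

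Second, for multiplicativity I would fix nonzero $a,b\in A$ and exploit the short exact sequence of $A$-modules
\[
0\to aA/abA\to A/abA\to A/aA\to 0.
\]
Because $A$ is a domain, multiplication by $a$ is injective, so the surjection $A\to aA/abA$, $x\mapsto ax$, has kernel exactly $bA$ and induces an isomorphism $A/bA\xrightarrow{\sim} aA/abA$. Additivity of length along the displayed sequence then gives $\ell(A/abA)=\ell(A/aA)+\ell(A/bA)$, which is precisely $\ord_A(ab)=\ord_A(a)+\ord_A(b)$.

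Finally, every element of $K^*$ has the form $a/b$ with $a,b\in A\setminus\{0\}$, and I would set $\ord_A(a/b)=\ord_A(a)-\ord_A(b)$. Well-definedness follows from multiplicativity: if $a/b=c/d$ then $ad=bc$, so $\ord_A(a)+\ord_A(d)=\ord_A(b)+\ord_A(c)$, and the two expressions for $\ord_A(a/b)$ coincide. The same multiplicativity shows directly that the resulting map $\ord_A\colon K^*\to\Z$ is a group homomorphism, completing the argument.
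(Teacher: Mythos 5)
Your proposal is correct and follows essentially the same route as the paper: finiteness of $\ell_A(A/aA)$ via the observation that $A/aA$ is a zero-dimensional noetherian (hence artinian) ring, additivity via the short exact sequence relating $A/abA$ to $A/aA$ and $A/bA$ (you write it as $0\to aA/abA\to A/abA\to A/aA\to 0$ with $aA/abA\simeq A/bA$, the paper embeds $A/aA$ into $A/abA$ by multiplication by $b$ — the same sequence up to swapping the roles of $a$ and $b$), and then the formal extension to $K^{\ast}$. Your explicit verification of well-definedness on fractions is a detail the paper leaves as "a formal fact," but there is no substantive difference.
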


\begin{proof}
Clearly we have 
\[
\ell_{A}\left(\frac{A}{aA}\right)=\ell_{\frac{A}{aA}}\left(\frac{A}{aA}\right),
\]
hence we only need to see that $\frac{A}{aA}$ is an artinian ring.
Since it is noetherian, we only need to see that it is 0-dimensional,
but this is clear since it has only one prime ideal. Given $a,b\in A\backslash\left\{ 0\right\} $,
we have an exact sequence
\[
0\to\frac{A}{aA}\to\frac{A}{abA}\to\frac{A}{bA}\to0
\]
and hence
\[
\ord_{A}\left(ab\right)=\ord_{A}\left(a\right)+\ord_{A}\left(b\right)
\]
in view of the additivity of the length. It is just a formal fact
now that the assignment
\begin{eqnarray*}
K^{\ast} & \to & \Z\\
\frac{a}{b} & \mapsto & \ord_{A}\left(a\right)-\ord_{A}\left(b\right)
\end{eqnarray*}
is a group homomorphism. 
\end{proof}
\begin{defn}
Let $X$ be a projective variety, a $k$-cycle on $X$ is a finite
(formal) sum
\[
\sum n_{i}\left[V_{i}\right]
\]
where $V_{i}\su X$ are $k$-dimensional subvarieties and $n_{i}\in\Z$.
We denote the free abelian group on $k$-subvarieties of $X$ with
$Z_{k}\left(X\right)$. Let $W\su X$ be a $\left(k+1\right)$-dimensional
subvariety and let $f\in\Bbbk\left(W\right)^{\ast}$. Define 
\[
\left(f\right)=\sum_{V\su W}\ord_{V}\left(f\right)\cdot\left[V\right]
\]
where the sum is taken over codimension one subvarieties $V$ of $W$
and $\ord_{V}$ is the order function associated to the 1-dimensional
local ring $\mathcal{O}_{W,V}$\footnote{You may want to glance at \cite[Exercise I.3.13, page 22]{Har} }
(see Lemma \ref{lem:orderfunction}). We say that a $k$-cycle $\alpha$
is rationally equivalent to $0$, written $\alpha\sim0$, if
\[
\alpha=\sum\left(f_{i}\right)
\]
for some $f_{i}\in\Bbbk\left(W_{i}\right)^{\ast}$ and some $\left(k+1\right)$-subvarieties
$W_{i}\su X$. Finally, denote with $\mathrm{Rat_{k}\left(X\right)}$
the subgroup of $Z_{k}\left(X\right)$ generated by $k$-cycles rationally
equivalent to 0,
\[
A_{k}\left(X\right)=\frac{Z_{k}\left(X\right)}{\mathrm{Rat}_{k}\left(X\right)}
\]
and 
\[
A_{\bullet}\left(X\right)=\bigoplus_{k\ge0}A_{k}\left(X\right).
\]
\end{defn}

\begin{example}
If $d=\dim X$, then clearly we have $A_{i}\left(X\right)=0$ for
$i>d$ and, as $X$ is irreducible, we have 
\[
A_{d}\left(X\right)=\Z\cdot\left[X\right]\simeq\Z.
\]
\end{example}

\begin{rem}
One important fact in the above construction is that, given a morphism
$f:X\to Y$ between projective $\Bbbk$-varieties, given a subvariety
$V\su X$, we define
\[
f_{\ast}\left[V\right]=\begin{cases}
\deg\left(V\to f\left(V\right)\right)\cdot\left[f\left(V\right)\right] & \mbox{if \ensuremath{\dim V=\dim f\left(V\right)}}\\
0 & \mbox{otherwise}
\end{cases},
\]
then $f_{\ast}$ gives a graded group homomorphism 
\[
f_{\ast}:A_{\bullet}\left(X\right)\to A_{\bullet}\left(Y\right),
\]
i.e. it respects rational equivalence (see \cite[Theorem 1.4, page 11]{Ful}). 
\end{rem}

\begin{lem}
\label{lem:pushforwardfunctorial}Let $X\overset{g}{\longrightarrow}Y\overset{f}{\longrightarrow}Z$
be morphisms between projective smooth varieties. Then 
\[
\left(f\circ g\right)_{\ast}=f\da\circ g\da
\]
as maps $A_{\bullet}\left(X\right)\to A_{\bullet}\left(Z\right)$.
\end{lem}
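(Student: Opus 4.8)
The plan is to reduce the statement to a computation on a single generator and then to the multiplicativity of degrees in a tower of function fields. Both $(f\circ g)_\ast$ and $f_\ast\circ g_\ast$ are induced by maps defined first on cycles, which descend to $A_\bullet$ by the preceding remark; hence it suffices to prove the identity $(f\circ g)_\ast[V]=f_\ast\bigl(g_\ast[V]\bigr)$ at the level of $Z_\bullet$, and since $Z_\bullet(X)$ is freely generated by the classes $[V]$ of subvarieties $V\subseteq X$, I only need to treat a single such $V$. I would write $V'=g(V)\subseteq Y$ and $V''=f(V')=(f\circ g)(V)\subseteq Z$ (these are closed subvarieties because projective morphisms are closed), and record the chain of inequalities $\dim V\ge\dim V'\ge\dim V''$, since the image of a variety under a morphism has dimension at most that of the source.

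Next I would carry out a short case analysis governed by where the dimension drops. If $\dim V>\dim V'$, then $g_\ast[V]=0$, so the right-hand side vanishes; and since then $\dim V>\dim V''$, the left-hand side $(f\circ g)_\ast[V]$ vanishes as well. If $\dim V=\dim V'$ but $\dim V'>\dim V''$, then $g_\ast[V]=\deg(V\to V')\,[V']$ and $f_\ast[V']=0$, so the right-hand side is again $0$; and since $\dim V>\dim V''$ the left-hand side vanishes too. The only substantive case is $\dim V=\dim V'=\dim V''$, where both sides are nonzero integer multiples of $[V'']$ and the whole content lies in comparing the coefficients.

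In this remaining case the right-hand side equals $\deg(V\to V')\cdot\deg(V'\to V'')\,[V'']$ while the left-hand side equals $\deg(V\to V'')\,[V'']$, so everything reduces to the identity
\[
\deg(V\to V'')=\deg(V\to V')\cdot\deg(V'\to V'').
\]
Each degree here is the degree of the corresponding finite extension of function fields, e.g. $\deg(V\to V')=[\Bbbk(V):\Bbbk(V')]$, so the claim is exactly the tower law $[\Bbbk(V):\Bbbk(V'')]=[\Bbbk(V):\Bbbk(V')]\cdot[\Bbbk(V'):\Bbbk(V'')]$ applied to the chain of inclusions $\Bbbk(V'')\subseteq\Bbbk(V')\subseteq\Bbbk(V)$ induced by $f$ and $g$.

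The main obstacle I anticipate is not the tower law itself, which is elementary field theory, but rather pinning down cleanly the dictionary between the geometric quantity $\deg(V\to f(V))$ and the function-field degree $[\Bbbk(V):\Bbbk(f(V))]$ — in particular that this equality is the right convention (so that it accounts for the inseparable contribution in positive characteristic) and that it is compatible with composition, i.e. that the three field inclusions genuinely form a single tower. Once this identification of degrees with field extensions is in place, the coefficient comparison is immediate, and the passage from the cycle-level identity back up to $A_\bullet(X)\to A_\bullet(Z)$ is automatic since each pushforward is already known to respect rational equivalence.
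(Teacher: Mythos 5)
Your proposal is correct and follows essentially the same route as the paper: the paper's proof is precisely a (much terser) version of your argument, reducing by a case analysis on where the dimension drops to the multiplicativity of degrees of finite morphisms, i.e.\ the tower law $[\Bbbk(V):\Bbbk(V'')]=[\Bbbk(V):\Bbbk(V')]\cdot[\Bbbk(V'):\Bbbk(V'')]$ for the induced function-field extensions. Your write-up simply makes explicit the cases and the dictionary $\deg(V\to f(V))=[\Bbbk(V):\Bbbk(f(V))]$ that the paper leaves implicit.
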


\begin{proof}
Working case by case, one is ultimately lead to showing the following:
\begin{claim*}
Let $X$ be irreducible and let $f,g$ be finite morphisms, then $$\deg\left(f\circ g\right)=\deg\left(f\right)\circ\deg\left(g\right)$$
\end{claim*}
which is equivalent to the multiplicativity of the degree in finite field extensions.
\end{proof}
\begin{rem}
\label{rem:degree}Consider now the structure morphism $X\to\left\{ \ast\right\} $
to a point, hence we have $f_{\ast}\alpha=0$ for every $\alpha\in A_{k}\left(X\right)$
for $k>0$ and\footnote{Here we used the fact that $\Bbbk$ is algebraically closed, the definition for general $\Bbbk$ is
\[
\sum n_{i}\left[P_{i}\right]\mapsto\sum n_{i}\cdot\left[\kappa\left(P_{i}\right):\Bbbk\right].
\]
to take the degree of a point into account.
}
\begin{eqnarray*}
f_{\ast}:A_{0}\left(X\right) & \to & A_{0}\left(\left\{ \ast\right\} \right)=\Z\\
\sum n_{i}\left[P_{i}\right] & \mapsto & \sum n_{i}
\end{eqnarray*}
The map $\pi_{\ast}$ is usually denoted with 
\[
\int_{X}:A_{\bullet}\left(X\right)\to\Z.
\]
\end{rem}

\subsection{Intersection with divisors}\label{sub:intersection}

\begin{defn}\label{defn:intersectionwithsubvariety}
Let $X$ be a projective variety and $j:Y\to X$ be the embedding
map of a $k$-dimensional irreducible subvariety. For a Cartier divisor
$D\su X$, set
\[
D\cdot Y:=j_{\ast}\left[j^{\ast}\mathcal{O}_{X}\left(D\right)\right]\in A_{k-1}\left(X\right)
\]
where $\left[j\ua\mathcal{O}_{X}\left(D\right)\right]$ is the Weil divisor
on $Y$ associated to the line bundle $j^{\ast}\mathcal{O}_{X}\left(D\right)$.
\end{defn}

\begin{rem}
\label{rem:pairingdivisors} In view of the smoothness assumption the pairing in Definition \ref{defn:intersectionwithsubvariety} induces
\[
\mathrm{Pic}\left(X\right)\ot_{\Z}Z_{k}\left(X\right)\to A_{k-1}\left(X\right).
\]
\end{rem}

\begin{prop}
Let $X$ be a $d$-dimensional projective variety, then
\begin{enumerate}
\item given two Cartier divisors $D,E\su X$, denoting with $\left[D\right],\left[E\right]$
the corresponding Weil divisors, we have
\[
D\cdot\left[E\right]=E\cdot\left[D\right]
\]
in $A_{d-2}\left(X\right)$;
\item the pairing of Remark \ref{rem:pairingdivisors} induces a map
\[
\mathrm{Pic}\left(X\right)\ot_{\Z}A_{k}\left(X\right)\to A_{k-1}\left(X\right).
\]
\end{enumerate}
\end{prop}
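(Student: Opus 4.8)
The plan is to prove the commutativity (1) first and then deduce (2) from it, since the pairing of Remark \ref{rem:pairingdivisors} is already $\Z$-bilinear and the only thing left to check for (2) is that it annihilates rational equivalence. A generator of $\mathrm{Rat}_k(X)$ is the cycle $(f)=\sum_V\ord_V(f)[V]$ attached to a $(k+1)$-dimensional subvariety $j_W\colon W\hookrightarrow X$ and some $f\in\Bbbk(W)^\ast$, the sum running over the prime divisors $V$ of $W$. Setting $L=j_W^\ast\mathcal{O}_X(D)$ and using that restriction of line bundles is compatible with the inclusions $V\su W\su X$, one rewrites
\[
D\cdot(f)=\sum_V\ord_V(f)\,\bigl(D\cdot[V]\bigr)=\left(j_W\right)_\ast\bigl(L\cdot(f)\bigr),
\]
so it suffices to show $L\cdot(f)=0$ in $A_{k-1}(W)$. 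Writing $L=\mathcal{O}_W(D')$ for a Cartier divisor $D'$ on the variety $W$ and noting that $(f)$ is the Weil divisor of the principal Cartier divisor determined by $f$, I would apply (1) on $W$ to swap the two factors; the resulting intersection is then taken against a principal divisor, whose line bundle $\mathcal{O}_W$ is trivial, so that its restriction to every subvariety of $W$ is trivial and has zero Weil-divisor class. Hence $L\cdot(f)=0$ and (2) follows.

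For (1), I would unwind the definition by writing $[E]=\sum_W\ord_W(E)[W]$ over prime divisors $W$, so that $\Z$-bilinearity reduces the identity $D\cdot[E]=E\cdot[D]$ to a comparison, along each codimension-two subvariety, of the local contributions of $D$ and $E$. The difference $D\cdot[E]-E\cdot[D]$ is a $(d-2)$-cycle supported on $|D|\cap|E|$, and away from the common components the coefficient along a codimension-two subvariety $Z$ is computed in the two-dimensional local ring $\mathcal{O}_{X,Z}$ by Serre's intersection multiplicity $\sum_i(-1)^i\ell\bigl(\tor_i^{\mathcal{O}_{X,Z}}(\mathcal{O}_{X,Z}/(f),\,\mathcal{O}_{X,Z}/(g))\bigr)$ of local equations $f,g$ of $D,E$, which is visibly symmetric (cf. Proposition \ref{prop:ringtor}).

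The main obstacle is the excess case, where a component $Z$ of $|D|\cap|E|$ sits inside a common component of $D$ and $E$: there the naive local lengths are no longer symmetric, and one must genuinely use rational equivalence rather than an equality of cycles. The standard remedy is to restrict the problem to a one-dimensional subvariety (or to the normalization of a surface slice through $Z$), where the discrepancy between the two cycles is governed by the tame symbol of $f$ and $g$; its vanishing is exactly Weil reciprocity on a complete nonsingular curve, equivalently the statement that a principal divisor on such a curve has degree zero. This is where completeness (projectivity) and $\Bbbk=\overline{\Bbbk}$ enter, and carrying out this reduction rigorously is the content of \cite[Theorem 2.4]{Ful}, which I would cite to conclude.
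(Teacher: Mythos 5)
Your proposal is correct and follows essentially the same route as the paper: both defer the hard commutativity statement (1) to \cite[Theorem 2.4, pag. 35]{Ful}, and both obtain (2) by applying (1) to the principal divisor $\left(f\right)$ on the subvariety $W$, whose associated line bundle is trivial in $\mathrm{Pic}\left(W\right)$ and therefore intersects to zero. Your write-up of (2) is in fact slightly more careful than the paper's (you make explicit the restriction $L=j_{W}^{\ast}\mathcal{O}_{X}\left(D\right)$ and the pushforward along $j_{W}$, which the paper leaves implicit), while your heuristics for (1) (Tor symmetry, tame symbols) are only motivation and, as you say, the actual proof is the citation.
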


\begin{proof}
The proof of point $1)$ is quite involved, so we refer to \cite[Theorem 2.4, pag. 35]{Ful}
for a detailed discussion. Once one believes that $1)$ is true, then
$2)$ comes easily: given a $\left(k+1\right)$-dimensional subvariety
$V\su X$ and a rational function $f$ on $V$, then by part $1)$
\[
D\cdot\left[\left(f\right)\right]=\left(f\right)\cdot\left[D\right]=0
\]
since $\left(f\right)=0$ in $\mathrm{Pic}\left(V\right)$.
\end{proof}

\subsection{Projective bundles}
\begin{defn}
Let $X$ be a projective variety and let $\mathcal{E}$ be a locally
free coherent $\mathcal{O}_{X}$-module of rank $r+1$. We define $\P_{X}\left(\mathcal{E}\right)$
as the $\P^{r}$-fibration over $X$ with fibres defined as $\P_{X}\left(\mathcal{E}\right)_{x}=\P\left(\mathcal{E}_{x}\right)$
for every $x\in X$. We call $\P_{X}\left(\mathcal{E}\right)$ the
projective bundle associated to $\mathcal{E}$. We call the natural
map $\P_{X}\left(\mathcal{E}\right)\to X$ its structure map.
\end{defn}

\begin{rem}\label{rem:bundleconstruction}
There are many other ways to introduce $\P_{X}\left(\mathcal{E}\right)$,
here we review two of them:
\begin{description}
\item [{Scheme-theoretic}] Since $\mathcal{E}$ is coherent, we have
a quasi-coherent sheaf of $\mathcal{O}_{X}$-algebras $\mathrm{Sym}\left(\mathcal{E}\right)$
which is locally isomorphic to $\mathcal{O}_{X}\left[T_{0},\dots,T_{r}\right]$,
therefore we can define $\P_{X}\left(\mathcal{E}\right)$ as the relative
projective space
\[
\pi:\P_{X}\left(\mathcal{E}\right)=\underline{\proj}_{X}\left(\mathrm{Sym}\left(\mathcal{E}\right)\right)\to X.
\]
In view of the previous remark, every point of $X$ has an affine
open neighborhood 
\[
U=\spec\left(A\right)\su X
\]
such that
\[
\P_{X}\left(\mathcal{E}\right)\times_{X}U\simeq\P_{A}^{r},
\]
whence the name. Moreover the various $\mathcal{O}_{\P_{A}^{r}}\left(1\right)$
glue together to give a canonical invertible sheaf $\mathcal{O}_{\P_{X}\left(\mathcal{E}\right)}\left(1\right)$
on $\P_{X}\left(\mathcal{E}\right)$. See \cite[Section II.7, pag. 160-162]{Har}
for a more precise account and a universal property. Let us only point
out that, in view of \cite[Proposition II.7.10, pag. 161]{Har}, $\pi$
is a projective morphism whenever $X$ admits an ample line bundle,
therefore, if $X$ is a projective variety, also $\P_{X}\left(\mathcal{E}\right)$
is.
\item [{Bundle-theoretic}] We can see $\mathcal{E}$ as a vector bundle
on $p:E\to X$, hence we have a covering $\left\{ U_{i}\right\} _{i}$
of $X$ and isomorphisms $\psi_{i}:\pi^{-1}\left(U_{i}\right)\to U_{i}\times\A^{r+1}$
such that, for every $i\neq j$, the maps
\[
\psi_{i}\circ\psi_{j}^{-1}:\left(U_{i}\cap U_{j}\right)\times\A^{r+1}\to\left(U_{i}\cap U_{j}\right)\times\A^{r+1}
\]
are given by
\[
\left(P,x\right)\mapsto\left(P,M_{ij}\cdot x\right)
\]
for $M_{ij}\in\mathrm{GL}_{r+1}\left(\mathcal{O}_{X}\left(U_{i}\cap U_{j}\right)\right)$.
To define $\P_{X}\left(\mathcal{E}\right)$, we glue the patches 
\[
U_{i}\times\P^{r}\overset{p_{1}}{\longrightarrow}U_{i}
\]
along
\begin{align*}
\left(U_{i}\cap U_{j}\right)\times\P^{r} & \to\left(U_{i}\cap U_{j}\right)\times\P^{r}\\
\left(P,x\right) & \mapsto\left(P,\overline{M_{ij}}\cdot x\right)
\end{align*}
where $\overline{M_{ij}}\in\mathrm{PGL}_{r+1}\left(\mathcal{O}_{X}\left(U_{i}\cap U_{j}\right)\right)$
denotes the class of $M_{ij}$.
\end{description}
\end{rem}

\begin{rem}
\label{rem:canonicalsheafprojectivebundle}On $\P_{X}\left(\mathcal{E}\right)$
there is another ``distinguished'' sheaf, namely the pull-back $\pi^{\ast}\mathcal{E}$:
it is locally free of rank $r+1$ (since $\mathcal{E}$ is) and a
choice of a local basis amounts to the choice of local coordinates
for $\P_{X}\left(\mathcal{E}\right)$, hence we have a natural map
$\pi^{\ast}\mathcal{E}\to\mathcal{O}_{\P_{X}\left(\mathcal{E}\right)}\left(1\right)$
which is surjective as $\mathcal{O}_{\P_{X}\left(\mathcal{E}\right)}\left(1\right)$
is generated by its sections.
\end{rem}

\begin{defn}
Let $X$ be a projective variety, $\mathcal{E}$ a locally free coherent
$\mathcal{O}_{X}$-module of rank $r+1$ and $\pi:\P_{X}\left(\mathcal{E}\right)\to X$
the associated projective bundle. Denote with $D_{\mathcal{E}}\su\P_{X}\left(\mathcal{E}\right)$
the Cartier divisor corresponding to $\mathcal{O}_{\P_{X}\left(\mathcal{E}\right)}\left(1\right)$.
For every $i\ge-r$, set
\begin{align*}
s_{i}\left(\mathcal{E}\right):A_{k}\left(X\right) & \to A_{k-i}\left(X\right)\\
\alpha & \mapsto\pi\da\left(D^{r+i}_{\mathcal{E}}\cdot\pi\ua\alpha\right),
\end{align*}
called the $i$-th Segre class.
\end{defn}

\begin{prop}
\label{prop:propertiessegreclasses}Let $X$ be a projective variety:
\begin{enumerate}
\item For every $\mathcal{E}$ locally free coherent $\mathcal{O}_{X}$-module of
rank $r+1$, we have $s_{i}\left(\E\right)=0$ for $i<0$ and $s_{0}\left(\E\right)=\mathrm{id}$;
\item For every $\mathcal{E},\F$ locally free coherent $\mathcal{O}_{X}$-modules,
we have 
\[
s_{i}\left(\E\right)\circ s_{j}\left(\F\right)=s_{j}\left(\F\right)\circ s_{i}\left(\E\right)
\]
for all $i,j\ge-r$;
\item If $Y$ is a projective variety and $f:Y\to X$ is a morphism, then,
for every $\mathcal{E}$ locally free coherent $\mathcal{O}_{X}$-module and
every cycle $\alpha$ on $X$, then Projection Formula holds
\[
f\da\left(s_{i}\left(f\ua\E\right)\left(\alpha\right)\right)=s_{i}\left(\E\right)\left(f\da\alpha\right);
\]
\item For every line bundle $\mathcal{L}$ on $X$ and $i\ge0$, we have
\begin{align*}
s_{i}\left(\mathcal{L}\right):A_{k}\left(X\right) & \to A_{k-i}\left(X\right)\\
\alpha & \mapsto\left(-1\right)^{i}D^i\cdot\alpha.
\end{align*}
\end{enumerate}
\end{prop}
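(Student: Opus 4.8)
The plan is to treat the four assertions in increasing order of difficulty, relying throughout on the three dimension-shifts attached to the structure map $\pi:\P_X(\E)\to X$: flat pullback $\pi\ua$ raises dimension by $r$, intersecting with $D_{\E}$ lowers it by $1$, and proper pushforward $\pi\da$ preserves dimension while killing any cycle whose support maps to something of strictly smaller dimension. Thus for $\alpha\in A_k(X)$ the class $D_{\E}^{r+i}\cdot\pi\ua\alpha$ lies in $A_{k-i}(\P_X(\E))$ and $s_i(\E)(\alpha)=\pi\da(D_{\E}^{r+i}\cdot\pi\ua\alpha)$ lands in $A_{k-i}(X)$, as the statement requires. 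For part (1), I would first dispose of $i<0$ by this bookkeeping: the cycle $D_{\E}^{r+i}\cdot\pi\ua\alpha$ has dimension $k-i>k$ but is supported over $\supp(\alpha)$, whose image under $\pi$ has dimension at most $k$; hence every component maps to a lower-dimensional subvariety and is annihilated by $\pi\da$, giving $s_i(\E)=0$. For $s_0(\E)=\mathrm{id}$ it suffices, by linearity, to evaluate on the class $[V]$ of a $k$-dimensional subvariety $V\su X$: over the generic point of $V$ the fibre of $\pi$ is $\P^r$ and $D_{\E}$ restricts to the hyperplane class $\mathcal{O}_{\P^r}(1)$, whose $r$-fold self-intersection is one reduced point of degree $1$, so the projection formula identifies the unique top-dimensional contribution to $\pi\da(D_{\E}^{r}\cdot\pi\ua[V])$ as $[V]$ itself.

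Part (4) is the degenerate case $r=0$. Here $\P_X(\mathcal{L})\cong X$ and $\pi$ is the identity, so $\pi\ua$ and $\pi\da$ disappear and $s_i(\mathcal{L})(\alpha)=D_{\mathcal{L}}^{\,i}\cdot\alpha$, where $D_{\mathcal{L}}$ is the Cartier divisor of $\mathcal{O}_{\P_X(\mathcal{L})}(1)$. The entire content is then the sign: one identifies $\mathcal{O}_{\P_X(\mathcal{L})}(1)\cong\mathcal{L}^{\vee}$, so that $D_{\mathcal{L}}=-D$ in $\mathrm{Pic}(X)$ and therefore $D_{\mathcal{L}}^{\,i}\cdot\alpha=(-1)^iD^i\cdot\alpha$.

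For part (3) I would set up the Cartesian square relating $\P_Y(f\ua\E)\cong\P_X(\E)\times_X Y$ to $\P_X(\E)$, with top arrow $g$ and new structure map $\pi'$, and use that $\mathcal{O}_{\P_Y(f\ua\E)}(1)=g\ua\mathcal{O}_{\P_X(\E)}(1)$, i.e. $D_{f\ua\E}=g\ua D_{\E}$. Then functoriality of pushforward (Lemma \ref{lem:pushforwardfunctorial}), the base-change compatibility $g\da\circ\pi'\ua=\pi\ua\circ f\da$ of flat pullback with proper pushforward, and the projection formula $g\da(g\ua D_{\E}^{r+i}\cdot\beta)=D_{\E}^{r+i}\cdot g\da\beta$ combine to rewrite $f\da\bigl(s_i(f\ua\E)(\alpha)\bigr)$ as $\pi\da(D_{\E}^{r+i}\cdot\pi\ua f\da\alpha)=s_i(\E)(f\da\alpha)$. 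Commutativity (part (2)) I would obtain by pulling both bundles back to the fibre product $W=\P_X(\E)\times_X\P_X(\F)$, with projections $p,q$ and structure map $\rho$: applying part (3) together with the projection formulas repeatedly, both $s_i(\E)\circ s_j(\F)$ and $s_j(\F)\circ s_i(\E)$ reduce to the single symmetric expression $\rho\da\bigl((p\ua D_{\E})^{r+i}\cdot(q\ua D_{\F})^{r+j}\cdot\rho\ua\alpha\bigr)$, whereupon the two divisor intersections commute by the commutativity $D\cdot[E]=E\cdot[D]$ of the preceding proposition.

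I expect part (2) to be the main obstacle. Assembling the fibre-product diagram and verifying that each composite literally equals the symmetric expression on $W$ demands careful compatibility between flat pullback, proper pushforward and the divisor intersection product, and ultimately rests on the commutativity $D\cdot[E]=E\cdot[D]$, whose proof was already flagged as ``quite involved''.
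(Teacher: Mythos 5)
Your proposal supplies an actual argument where the paper gives none: the paper's entire ``proof'' of this proposition is the citation to \cite[Proposition 3.1, pag.~48]{Ful}. What you have written is, in substance, a faithful reconstruction of Fulton's proof: the dimension count disposing of $i<0$ (a $(k-i)$-cycle supported over the $k$-dimensional $\mathrm{Supp}(\alpha)$ dies under $\pi_{\ast}$), the reduction of $s_{0}(\mathcal{E})=\mathrm{id}$ to a computation over the generic point of $V$ where $\mathcal{E}$ trivializes, the fibre square $\mathbb{P}_{Y}(f^{\ast}\mathcal{E})\simeq\mathbb{P}_{X}(\mathcal{E})\times_{X}Y$ with $D_{f^{\ast}\mathcal{E}}=g^{\ast}D_{\mathcal{E}}$ for part (3) (where you also correctly read the cycle $\alpha$ as living on $Y$, fixing a typo in the statement), and the reduction of both composites in part (2) to the symmetric expression on $\mathbb{P}_{X}(\mathcal{E})\times_{X}\mathbb{P}_{X}(\mathcal{F})$. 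The inputs you invoke --- functoriality of $\pi_{\ast}$, the base-change identity $g_{\ast}\circ\pi'^{\ast}=\pi^{\ast}\circ f_{\ast}$, the projection formula for Cartier divisors, and the commutativity $D\cdot[E]=E\cdot[D]$ --- are exactly the foundational facts from Chapters 1--2 of \cite{Ful} that this paper also treats as black boxes, so invoking them is legitimate.

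One point deserves an explicit warning: your part (4). The identification $\mathcal{O}_{\mathbb{P}_{X}(\mathcal{L})}(1)\simeq\mathcal{L}^{\vee}$, on which the whole sign rests, is true in Fulton's convention, where the projectivization parametrizes \emph{lines} in the bundle and $\mathcal{O}(1)$ is the dual of the tautological subbundle; it is \emph{false} under the conventions this paper actually sets up. Indeed, by Remark \ref{rem:bundleconstruction} the paper takes $\mathbb{P}_{X}(\mathcal{E})=\underline{\mathrm{Proj}}_{X}(\mathrm{Sym}(\mathcal{E}))$, and by Remark \ref{rem:canonicalsheafprojectivebundle} there is a canonical surjection $\pi^{\ast}\mathcal{E}\to\mathcal{O}_{\mathbb{P}_{X}(\mathcal{E})}(1)$; for $\mathcal{E}=\mathcal{L}$ of rank one a surjection of line bundles is an isomorphism, so in that convention $\mathcal{O}_{\mathbb{P}_{X}(\mathcal{L})}(1)\simeq\mathcal{L}$ and one would get $s_{i}(\mathcal{L})=D^{i}$ with no sign. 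So part (4) as stated (which is Fulton's statement) is only compatible with Fulton's convention --- the paper is in fact internally inconsistent on this point, since Example \ref{exa:chernclassinvertible} asserts that $s_{1}(\mathcal{L})$ is intersection with $D$, contradicting part (4). Your proof silently adopts the convention that makes the statement true, which is the right practical choice, but the key identification should be flagged as a consequence of that choice of convention rather than presented as if it followed from the paper's definitions; as written, that one line is the only unproved (and, relative to the paper's setup, unprovable) step in an otherwise correct argument.
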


\begin{proof}
See \cite[Proposition 3.1, pag. 48]{Ful}.
\end{proof}
\begin{cor}
\label{cor:surjectiveandsplitinjective}Let $X$ be a projective variety,
$\mathcal{E}$ a locally free coherent $\mathcal{O}_{X}$-module of rank $r+1$
and $\pi:\P_{X}\left(\mathcal{E}\right)\to X$ the associated projective
bundle. Then
\begin{enumerate}
\item $\pi_{\ast}:A_{\bullet}\left(\P_{X}\left(\E\right)\right)\to A_{\bullet}\left(X\right)$
is surjective;
\item $\pi^{\ast}:A_{\bullet}\left(X\right)\to A_{\bullet}\left(\P_{X}\left(\E\right)\right)$
is split injective.
\end{enumerate}
\end{cor}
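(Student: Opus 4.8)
The plan is to deduce both assertions directly from the normalization $s_{0}\left(\E\right)=\mathrm{id}$ recorded in Proposition \ref{prop:propertiessegreclasses}(1). Unwinding the definition of the $0$-th Segre class, that normalization is precisely the identity
\[
\pi\da\left(D_{\E}^{r}\cdot\pi\ua\alpha\right)=\alpha,
\]
valid for every $\alpha\in A_{\bullet}\left(X\right)$, and each of the two claims will be a formal consequence of it. It is worth keeping track of degrees: since $\pi$ has relative dimension $r$, the flat pullback $\pi\ua$ raises dimension by $r$, intersecting with $D_{\E}^{r}$ lowers it by $r$, and the proper pushforward $\pi\da$ preserves it, so the composite above indeed sends $A_{k}\left(X\right)$ to $A_{k}\left(X\right)$.

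For the surjectivity of $\pi\da$, I would simply observe that every class $\alpha\in A_{k}\left(X\right)$ is realized as $\alpha=\pi\da\bigl(D_{\E}^{r}\cdot\pi\ua\alpha\bigr)$, exhibiting $\alpha$ explicitly as $\pi\da$ of a class on $\P_{X}\left(\E\right)$. As $k$ is arbitrary this gives surjectivity of $\pi\da$ on all of $A_{\bullet}\left(X\right)$.

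For the split injectivity of $\pi\ua$, the idea is to produce an explicit left inverse. Define a homomorphism of graded groups (of degree $-r$)
\[
\sigma:A_{\bullet}\left(\P_{X}\left(\E\right)\right)\to A_{\bullet}\left(X\right),\qquad \sigma\left(\beta\right)=\pi\da\left(D_{\E}^{r}\cdot\beta\right),
\]
which is well defined because both intersection with the Cartier divisor $D_{\E}$ (Section \ref{sub:intersection}) and proper pushforward respect rational equivalence. Then for $\alpha\in A_{k}\left(X\right)$ one computes $\sigma\left(\pi\ua\alpha\right)=\pi\da\left(D_{\E}^{r}\cdot\pi\ua\alpha\right)=s_{0}\left(\E\right)\left(\alpha\right)=\alpha$, i.e. $\sigma\circ\pi\ua=\mathrm{id}_{A_{\bullet}\left(X\right)}$. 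The existence of such a retraction forces $\pi\ua$ to be split injective, with $A_{\bullet}\left(\P_{X}\left(\E\right)\right)=\mathrm{im}\left(\pi\ua\right)\oplus\ker\left(\sigma\right)$.

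I do not expect any real obstacle here once Proposition \ref{prop:propertiessegreclasses} is granted: all the geometric substance — the interplay between $D_{\E}$, $\pi\ua$ and $\pi\da$, and the fact that intersecting with $D_{\E}^{r}$ and pushing forward recovers the identity — is already packaged into the single statement $s_{0}\left(\E\right)=\mathrm{id}$. The only points deserving genuine care are bookkeeping: confirming that the degree shifts match up as described above, and checking that $\sigma$ is an honest additive map (hence a homomorphism of graded groups) rather than merely a set-theoretic assignment.
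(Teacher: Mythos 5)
Your proposal is correct and follows essentially the same route as the paper: both arguments read off surjectivity of $\pi_{\ast}$ and a retraction of $\pi^{\ast}$ (your $\sigma$, the paper's $F$) directly from the normalization $s_{0}\left(\E\right)=\mathrm{id}$ of Proposition \ref{prop:propertiessegreclasses}. In fact your writing of the operator as $\beta\mapsto\pi_{\ast}\left(D_{\E}^{r}\cdot\beta\right)$ is the correct reading of the Segre class definition, whereas the paper's proof contains the typo $rD_{\E}$ in place of $D_{\E}^{r}$.
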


\begin{proof}
It follows from Proposition \ref{prop:propertiessegreclasses} that
for all $\alpha\in A_{\bullet}\left(X\right)$
\[
\alpha=\pi\da\left(rD_{\E}\cdot\pi\ua\alpha\right),
\]
hence $\pi\da$ is surjective. Let 
\begin{align*}
F:A_{k+r}\left(\P_{X}\left(\E\right)\right) & \to A_{k}\left(X\right)\\
\beta & \mapsto\pi\da\left(rD_{\E}\cdot\beta\right),
\end{align*}
then $F\circ\pi\ua=\mathrm{id}$.
\end{proof}
\begin{thm}[Splitting construction]
\label{thm:splittingconstruction}Let $X$ be a projective variety
and let $\mathcal{E}$ be a locally free coherent $\mathcal{O}_{X}$-module
of rank $r+1$. Then there exists a projective variety $Y$ and a
morphism $f:Y\to X$ such that
\begin{enumerate}
\item $f$ decomposes as a sequence of projective bundles;
\item the map $f\da:A_{\bullet}\left(Y\right)\to A_{\bullet}\left(X\right)$ is surjective;
\item the map $f\ua:A_{\bullet}\left(X\right)\to A_{\bullet}\left(Y\right)$ is split
injective;
\item there exists a sequence
\[
0=\mathcal{E}_{0}\su\mathcal{E}_{1}\su\dots\su\mathcal{E}_{r}=f^{\ast}\mathcal{E}
\]
of locally free $\mathcal{O}_{Y}$-modules such that $\dfrac{\mathcal{E}_{i}}{\mathcal{E}_{i-1}}$
is locally free of rank 1 for every $i=1,\dots,r$.
\end{enumerate}
\end{thm}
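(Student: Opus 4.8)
The plan is to argue by induction on the rank $r+1$ of $\E$. The base case $r+1=1$ is immediate: here $\E$ is already a line bundle, so I take $Y=X$, $f=\mathrm{id}_X$, and the trivial flag $0\su\E$ verifies $(4)$, while $(1)$, $(2)$, $(3)$ hold vacuously.

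For the inductive step, suppose the statement holds for all locally free sheaves of rank at most $r$ on any projective variety, and let $\E$ have rank $r+1\ge 2$. First I would form the projective bundle $\pi:\P_X(\E)\to X$, which is again a projective variety by Remark \ref{rem:bundleconstruction} (as $X$ is projective), and is reduced and irreducible since its fibres are projective spaces over the reduced irreducible base $X$. On $\P_X(\E)$ the canonical surjection $\pi\ua\E\to\mathcal{O}_{\P_X(\E)}(1)$ of Remark \ref{rem:canonicalsheafprojectivebundle} fits into a tautological short exact sequence
\[
0\to\mathcal{S}\to\pi\ua\E\to\mathcal{O}_{\P_X(\E)}(1)\to0,
\]
whose kernel $\mathcal{S}$ is locally free of rank $r$, being locally a direct summand of the vector bundle $\pi\ua\E$ (the kernel of a surjection onto a line bundle).

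Now I would apply the inductive hypothesis to $\mathcal{S}$ on $\P_X(\E)$, obtaining a projective variety $Y$ and a morphism $g:Y\to\P_X(\E)$ decomposing as a tower of projective bundles, with $g\da$ surjective, $g\ua$ split injective, and a flag of $g\ua\mathcal{S}$ with line-bundle quotients. Setting $f=\pi\circ g:Y\to X$ then gives $(1)$, since $f$ is itself a tower of projective bundles. Pulling the tautological sequence back along $g$ yields
\[
0\to g\ua\mathcal{S}\to f\ua\E\to g\ua\mathcal{O}_{\P_X(\E)}(1)\to0,
\]
so extending the flag of $g\ua\mathcal{S}$ by the top term $f\ua\E$—whose quotient $g\ua\mathcal{O}_{\P_X(\E)}(1)$ is a line bundle—produces the desired flag of $f\ua\E$, which establishes $(4)$. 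For $(2)$ and $(3)$ I would combine Corollary \ref{cor:surjectiveandsplitinjective} with functoriality: by that corollary $\pi\da$ is surjective and $\pi\ua$ is split injective, and by induction the same holds for $g$. Since $f\da=\pi\da\circ g\da$ by Lemma \ref{lem:pushforwardfunctorial}, $f\da$ is a composite of surjections, hence surjective; and since $f\ua=g\ua\circ\pi\ua$ by contravariant functoriality of pullback, composing the left inverses of $g\ua$ and $\pi\ua$ produces a left inverse of $f\ua$, so $f\ua$ is split injective.

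The main obstacle I anticipate is the careful handling of the tautological sequence: checking that the kernel $\mathcal{S}$ is genuinely locally free of rank $r$, and that pullback along $g$ preserves exactness so that the two flags concatenate into a single complete flag. Once the tower structure is in place, the transfer of properties $(2)$ and $(3)$ along the composition is a purely formal consequence of functoriality and Corollary \ref{cor:surjectiveandsplitinjective}.
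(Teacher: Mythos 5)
Your proof is correct and takes essentially the same route as the paper: induction on the rank, peeling off a line bundle via the tautological exact sequence on a projective bundle, and deducing (1)--(3) from Corollary \ref{cor:surjectiveandsplitinjective} together with functoriality of $f_{\ast}$ and $f^{\ast}$. The only cosmetic difference is that the paper forms $\mathbb{P}_{X}\left(\mathcal{E}^{\lor}\right)$, so that dualizing the tautological sequence yields a sub-line-bundle and the flag is built from the bottom, whereas you work on $\mathbb{P}_{X}\left(\mathcal{E}\right)$ directly and append the quotient line bundle $g^{\ast}\mathcal{O}(1)$ at the top of the flag; both assemble the same tower of projective bundles.
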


\begin{proof}
We work by induction on $r$. If $r=0$, then there is nothing to
prove. Suppose $r>0$ and set 
\[
Y'=\P_{X}\left(\mathcal{E}^{\lor}\right)\overset{f'}{\longrightarrow}X.
\]
By the above, we have an exact sequence of locally free sheaves\footnote{Exercise: why is $\tilde{\mathcal{E}}$ also locally free?}
\[
0\to\tilde{\mathcal{E}}\to\left(f'\right)\ua\mathcal{E}^{\lor}\to\mathcal{O}_{Y'}\left(1\right)\to0,
\]
moreover, since $\mathrm{rk}\left(\mathcal{O}_{Y'}\left(1\right)\right)=1$, we have
$\mathrm{rk}\left(\tilde{\mathcal{E}}\right)=\mathrm{rk}\left(\left(f'\right)\ua\mathcal{E}\right)-1=\mathrm{rk}\left(\mathcal{E}\right)-1=r$
we conclude by induction and Corollary \ref{cor:surjectiveandsplitinjective}.
\end{proof}

\subsection{Chern and Todd classes}
\begin{defn}
Let $X$ be a projective variety and let $\mathcal{E}$ be a locally
free coherent $\mathcal{O}_{X}$-module of rank $r+1$. Consider the Segre
classes of $\E$ as endomorphisms of $A_{\bullet}\left(X\right)$, we define
recursively
\begin{align*}
c_{0}\left(\E\right) & =\mathrm{id}\\
c_{1}\left(\E\right) & =-s_{1}\left(\E\right)\\
c_{2}\left(\E\right) & =-s_{2}\left(\E\right)+s_{1}\left(\E\right)^{2}\\
c_{n}\left(\E\right) & =-\sum_{i=1}^{n}s_{i}\left(\E\right)c_{n-i}\left(\E\right),
\end{align*}
called the Chern classes of $\E$\footnote{You may wonder why such a complicated recursive definition. One could define the total Segre class $$s\left( \E\right)=\sum_{i\ge0} s_i \left( \E\right)$$ that turns out to be an isomorphism. Its inverse is the total Chern class and it's given by $$c \left( \E\right) = \sum_{i\ge0} c_i \left( \E\right)$$ of Remark \ref{rem:chernclassesfinite}.}.
\end{defn}

\begin{rem}
\label{rem:chernclassesfinite}We see that $c_{i}\left(\E\right)$
gives a map $A_{k}\left(X\right)\to A_{k-i}\left(X\right)$, therefore
we have the feeling that the sequence of $c_{i}\left(\E\right)$ is
bounded: it is true (namely, one can show that $c_{i}\left(\E\right)=0$
for $i>\mathrm{rk}\left(\E\right)$, see \cite[Theorem 3.2, pag. 50]{Ful}). Therefore
the total Chern class
\[
c\left(\E\right)=\sum_{i\ge0}c_{i}\left(\E\right)
\]
is well defined.
\end{rem}

The following properties are just a formal translation of the corresponding
ones in Proposition \ref{prop:propertiessegreclasses}
\begin{prop}
\label{prop:propertieschernclasses}Let $X$ be a projective variety:
\begin{enumerate}
\item For every $\mathcal{E},\F$ locally free coherent $\mathcal{O}_{X}$-modules,
we have 
\[
c_{i}\left(\E\right)\circ c_{j}\left(\F\right)=c_{j}\left(\F\right)\circ c_{i}\left(\E\right)
\]
for all $i,j$;
\item If $Y$ is a projective variety and $f:Y\to X$ is a morphism, then,
for every $\mathcal{E}$ locally free coherent $\mathcal{O}_{X}$-module and
every cycle $\alpha$ on $X$, then Projection Formula holds
\[
f\da\left(c_{i}\left(f\ua\E\right)\left(\alpha\right)\right)=c_{i}\left(\E\right)\left(f\da\alpha\right).
\]
\end{enumerate}
\end{prop}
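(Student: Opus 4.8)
The statement to prove is Proposition \ref{prop:propertieschernclasses}, which asserts two properties of Chern classes: commutativity of the operators $c_i(\E)$ and $c_j(\F)$, and compatibility with pushforward via the projection formula. The crucial observation, already flagged by the surrounding text, is that both properties are \emph{formal consequences} of the corresponding statements for Segre classes established in Proposition \ref{prop:propertiessegreclasses}, combined with the recursive definition of the Chern classes in terms of the $s_i$.

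The plan is to argue by induction on the index $n$ using the recursion $c_n(\E) = -\sum_{i=1}^n s_i(\E) c_{n-i}(\E)$ together with the base case $c_0(\E) = \mathrm{id}$. For part $(1)$, I would first note that every $c_n(\E)$ is, by unwinding the recursion, a $\Z$-polynomial (with no constant term beyond the identity) in the Segre operators $s_1(\E),\dots,s_n(\E)$; this is immediate by induction since each $c_{n-i}(\E)$ already has this form. Now Proposition \ref{prop:propertiessegreclasses}(2) tells us that $s_i(\E)$ and $s_j(\F)$ commute as endomorphisms of $A_\bullet(X)$ for all $i,j$. Since $c_n(\E)$ is a polynomial in the $s_i(\E)$ and $c_m(\F)$ is a polynomial in the $s_j(\F)$, and every generator of the first family commutes with every generator of the second, the two polynomial expressions commute. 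This gives $c_i(\E)\circ c_j(\F) = c_j(\F)\circ c_i(\E)$ at once.

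For part $(2)$, the projection formula, I would again induct on $n$, using that the Segre classes satisfy $f\da(s_i(f\ua\E)(\alpha)) = s_i(\E)(f\da\alpha)$ by Proposition \ref{prop:propertiessegreclasses}(3). The base case $n=0$ is trivial since $c_0 = \mathrm{id}$. For the inductive step, I would apply $f\da$ to $c_n(f\ua\E)(\alpha) = -\sum_{i=1}^n s_i(f\ua\E)\big(c_{n-i}(f\ua\E)(\alpha)\big)$, use the Segre projection formula to move $f\da$ past each $s_i(f\ua\E)$, and then use the inductive hypothesis on $c_{n-i}$ to move $f\da$ past the remaining operator. The key point that must be checked carefully is that Chern classes are compatible with pullback of the \emph{bundle}, i.e. that $c_{n-i}(f\ua\E)$ applied to a cycle pushes forward correctly; this is exactly what the inductive hypothesis supplies, and the telescoping works because $f\ua$ commutes with forming Segre (hence Chern) classes of a pulled-back bundle.

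The main obstacle is essentially bookkeeping rather than conceptual: one must be scrupulous that the recursion is applied to the \emph{same} bundle throughout each argument (all operators $s_i(\E)$ for part $(1)$ of a fixed bundle, and matching $f\ua\E$ on the source with $\E$ on the target for part $(2)$), so that the inductive hypothesis is invoked at the correct index and the polynomial identities line up. No genuinely new geometric input is required beyond Proposition \ref{prop:propertiessegreclasses}; the content is the faithful transport of the Segre-class properties through the defining recursion.
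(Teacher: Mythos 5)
Your proposal is correct and is exactly the argument the paper intends: the paper offers no written proof at all, merely asserting that these properties are ``a formal translation'' of Proposition \ref{prop:propertiessegreclasses}, and your induction on the recursion $c_{n}\left(\E\right)=-\sum_{i=1}^{n}s_{i}\left(\E\right)c_{n-i}\left(\E\right)$ is precisely that translation spelled out. The only cosmetic point is that your closing remark about $f\ua$ commuting with the formation of Segre classes is not actually needed --- the Segre projection formula plus the inductive hypothesis already suffice, as your displayed computation shows.
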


\begin{example}
\label{exa:chernclassinvertible}Let $\mathcal{L}$ be an invertible
sheaf on $X$, then $\mathcal{L}\simeq\mathcal{O}_{X}\left(D\right)$
for some divisor $D$. In view of Remark \ref{rem:chernclassesfinite},
we see that $c_{i}\left(\mathcal{L}\right)=0$ for $i\ge2$, therefore
\begin{align*}
c\left(\mathcal{L}\right) & =1+c_{1}\left(\mathcal{L}\right)\\
 & =1-s_{1}\left(\mathcal{L}\right)
\end{align*}
where $s_{1}\left(\mathcal{L}\right)$ is just the intersection with
$D$.
\end{example}

\begin{notation*}
By abuse of notation, whenever there exists a divisor $D$ such that
\begin{align*}
c_{i}\left(\E\right):A_{\bullet}\left(X\right) & \to A_{\bullet}\left(X\right)\\
\alpha & \mapsto D\cdot\alpha,
\end{align*}
we'll write $c_{i}\left(\E\right)=D$. With this notation, in view
of Example \ref{exa:chernclassinvertible}, $c\left(\mathcal{O}_{X}\left(D\right)\right)=1+D$
for any divisor $D$.
\end{notation*}
The main property of the total Chern class is
\begin{prop}[Whitney sum]
\label{prop:whitneysum}Let 
\[
0\to\F\to\G\to\mathcal{H}\to0
\]
be an exact sequence of locally free coherent $\mathcal{O}_{X}$-modules on
a smooth projective variety $X$. Then 
\[
c\left(\G\right)=c\left(\F\right)\circ c\left(\H\right).
\]
\end{prop}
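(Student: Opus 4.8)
The plan is to reduce, via the Splitting construction (Theorem \ref{thm:splittingconstruction}), to the case in which $\F$ and $\H$---and hence $\G$---are filtered by sub-bundles with line-bundle quotients, a situation where the total Chern class can be computed explicitly. Throughout I would use that Chern classes commute (Proposition \ref{prop:propertieschernclasses}), so that the composition $c(\F)\circ c(\H)$ behaves like a product in a commutative ring, and that a short exact sequence of locally free sheaves is locally split, hence remains exact after any pullback.

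First I would set up the descent. Applying the Splitting construction successively to $\F$ and then to the pullback of $\H$, I obtain a morphism $f:Y\to X$, a composition of projective bundles, such that $f\da$ is surjective, $f\ua$ is split injective, and both $f\ua\F$ and $f\ua\H$ carry complete flags with line-bundle quotients. Pulling back the given sequence yields an exact sequence $0\to f\ua\F\to f\ua\G\to f\ua\H\to0$, and by taking preimages under $f\ua\G\to f\ua\H$ of the flag of $f\ua\H$ and prepending the flag of $f\ua\F$, I obtain a complete flag of $f\ua\G$ whose successive quotients are exactly the line bundles occurring for $f\ua\F$ together with those for $f\ua\H$. Granting the computation on $Y$ (next paragraph), the identity $c(f\ua\G)=c(f\ua\F)\circ c(f\ua\H)$ descends to $X$: for any $\beta\in A_\bullet(Y)$ the projection formula (Proposition \ref{prop:propertieschernclasses}) gives
\[
c(\G)\bigl(f\da\beta\bigr)=f\da\bigl(c(f\ua\G)(\beta)\bigr)=f\da\bigl(c(f\ua\F)(c(f\ua\H)(\beta))\bigr)=c(\F)\bigl(c(\H)(f\da\beta)\bigr),
\]
and since $f\da$ is surjective this establishes $c(\G)=c(\F)\circ c(\H)$ on all of $A_\bullet(X)$.

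It then remains to prove, for a bundle $\E$ carrying a complete flag with line-bundle quotients $L_1,\dots,L_n$, the normalization formula $c(\E)=\prod_{i=1}^n\bigl(1+c_1(L_i)\bigr)$; applied to the flag of $f\ua\G$ constructed above, this factors the product into the contributions of $f\ua\F$ and of $f\ua\H$ and yields the required identity on $Y$. I would argue by induction on $n$, the case $n=1$ being Example \ref{exa:chernclassinvertible}; the inductive step is precisely the Whitney formula for a sequence $0\to\E'\to\E\to L\to0$ with $L$ a line bundle. This last case is the genuine computational core and the main obstacle: it must be verified by hand from the definition of the Segre classes on the projective bundle $\P_X(\E)$, using $D_\E=c_1(\mathcal{O}_{\P_X(\E)}(1))$ together with the projection formula and the line-bundle values of the Segre classes recorded in Proposition \ref{prop:propertiessegreclasses}. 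Once this base computation is secured, the induction gives the normalization formula and the descent above completes the proof.
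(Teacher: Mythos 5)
Your reduction skeleton is sound, and in fact it is precisely the strategy of the proof in Fulton that the paper invokes: the paper's own ``proof'' is nothing more than the citation \cite[Theorem 3.2 (e), pag. 51]{Ful}, so you are attempting strictly more than the text does. Your descent step is correct as written: the splitting construction is proved in the paper (Theorem \ref{thm:splittingconstruction}) using only the Segre-class properties, so there is no circularity in invoking it, the pulled-back sequence stays exact because it is locally split, the flag on $f\ua\G$ obtained by prepending the flag of $f\ua\F$ to the preimages of the flag of $f\ua\H$ is legitimate, and the chain of equalities using the projection formula twice together with surjectivity of $f\da$ does transport the identity from $Y$ down to $X$.

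The genuine gap is the step you yourself flag as ``the computational core'': the case $0\to\E'\to\E\to L\to0$ with $L$ a line bundle, equivalently the normalization $c(\E)=\prod_i\bigl(1+c_1(L_i)\bigr)$ for a filtered bundle. Your argument reduces the Whitney formula to this special instance of the Whitney formula and then stops, asserting that it ``must be verified by hand from the definition of the Segre classes.'' But nothing in the toolkit you cite delivers it: part 4 of Proposition \ref{prop:propertiessegreclasses} only computes Segre classes of line bundles, and the definition $s_i(\E)(\alpha)=\pi\da\bigl(D_{\E}^{r+i}\cdot\pi\ua\alpha\bigr)$ gives no direct handle on how $D_\E$ interacts with a subbundle or quotient of $\E$. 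The actual verification requires a new geometric input --- in Fulton's treatment, the fact that the filtration of $\pi\ua\E$ on $\P_X(\E)$ forces the relation $\prod_i c_1\bigl(\mathcal{O}_{\P_X(\E)}(1)\ot\pi\ua L_i\bigr)=0$ (because the common zero locus of the corresponding canonical sections is empty), which one then expands and pushes forward to identify $s(\E)$ with $\prod_i c(L_i)^{-1}$. Since all the analytic content of the proposition lives in exactly this computation and everything preceding it is formal bookkeeping, the proposal as it stands is an (accurate) reduction, not a proof.
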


\begin{proof}
It is \cite[Theorem 3.2 (e), pag. 51]{Ful} for $t=1$.
\end{proof}
\begin{example}
\label{exa:tangentbundlePn} Let us compute $c\left(\mathrm{T}_{\P^{n}}\right)$,
where $\mathrm{T}$ denotes the tangent sheaf. We have the Euler
exact sequence
\[
0\to\mathcal{O}_{\P^{n}}\to\mathcal{O}_{\P^{n}}\left(1\right)^{\op n+1}\to\mathrm{T}_{\P^{n}}\to0
\]
and hence, in view of Proposition \ref{prop:whitneysum}, we have
\[
c\left(\mathcal{O}_{\P^{n}}\left(1\right)^{\op n+1}\right)=c\left(\mathcal{O}_{\P^{n}}\right)\circ c\left(\mathrm{T}_{\P^{n}}\right),
\]
with $c\left(\mathcal{O}_{\P^{n}}\right)=1$ and 
\begin{align*}
c\left(\mathcal{O}_{\P^{n}}\left(1\right)^{\op n+1}\right) & =\left(1+H\right)^{n+1}.
\end{align*}
Then
\[
c_{k}\left(\mathrm{T}_{\P^{n}}\right)=\binom{n+1}{k}\cdot H^{k},
\]
where $H^{k}$ is the class of a $k$-dimensional linear subspace\footnote{Here we're cheating a bit: we haven't proved that all $k$-dimensional
linear subspaces of $\P^{n}$ are equivalent. Take it as an act of
faith or see \cite[Example 1.9.3, pag. 23]{Ful}.}.
\end{example}

\begin{rem}
\label{rem:splittingtrick}The way one uses Proposition \ref{prop:whitneysum}
and Theorem \ref{thm:splittingconstruction} is as follows: in view
of Example \ref{exa:chernclassinvertible}, given a locally free coherent
$\mathcal{O}_{X}$-module $\E$ of rank $r+1$ on a smooth projective variety
$X$, we have a product
\[
c\left(\E\right)=\prod_{i=1}^{r+1}\left(1+\alpha_{i}\right)
\]
on some projective bundle over $X$. In view of Corollary \ref{cor:surjectiveandsplitinjective},
we see that the $k$-th symmetric polynomial in the $\alpha_{i}$'s
(treating them as formal variables) is actually $c_{k}\left(\E\right)$,
hence every symmetric polynomial in the $\alpha_{i}$'s in expressible
in terms of the Chern classes. For a (more) formal explanation of
this principle one can see \cite[Remark 3.2.3, pag. 54]{Ful}. We also point out that, using this trick, given a morphism $f:X\to Y$
between projective varieties and a locally free coherent $\mathcal{O}_{Y}$-module
$\E$, we have
\[
c_{k}\left(f^{\ast}\E\right)=f^{\ast}c_{k}\left(\E\right).
\]
A proof of this can be found in \cite[Section 5.4]{EH}.
\end{rem}

\begin{example}
\label{exa:formulacheernroots}Using the splitting principle one can
see how Chern roots behave under linear algebra operations: if $\E$
has Chern roots $\alpha_{1},\dots,\alpha_{n}$, then
\begin{itemize}
\item $\E^{\lor}$ has Chern roots $-\alpha_{1},\dots,-\alpha_{n}$;
\item $S^{k}\E$ (the symmetric power) has Chern roots $\alpha_{i_{1}}+\dots+\alpha_{i_{k}}$
for $1\le i_{1}\le\dots\le i_{k}\le n$;
\item $\bigwedge^{k}\E$ (the exterior power) has Chern roots $\alpha_{i_{1}}+\dots+\alpha_{i_{k}}$
for $1\le i_{1}<\dots<i_{k}\le n$.
\end{itemize}
In particular $\det\left(\E\right)$ has Chern root $\alpha _1 + \dots +\alpha_n$. For a proof of this formulas see \cite[Remark 3.2.3, pag. 54]{Ful}
\end{example}

\begin{defn}\label{defn:ChernToddcharacters}
The formal variables $\alpha_{1},\dots,\alpha_{r+1}$ introduced in
Remark \ref{rem:splittingtrick} are called the Chern roots of $\E$.
We set\footnote{With this notation we mean 
\begin{align*}
\exp\left(x\right) & =1+x+\frac{1}{2}x^{2}+\dots\\
\frac{x}{1-\exp\left(-x\right)} & =1+\frac{1}{2}x+\frac{1}{12}x^{2}+\dots
\end{align*}
as formal power series in $\Q\left[\left[x\right]\right]$.}
\begin{align*}
\ch\left(\E\right) & =\sum_{i=1}^{r+1}\exp\left(\alpha_{i}\right)\\
\td\left(\E\right) & =\prod_{i=1}^{r+1}\frac{\alpha_{i}}{1-\exp\left(-\alpha_{i}\right)}
\end{align*}
as endomorphisms of $A_{\bullet}\left(X\right)\ot\Q$. They are called
the Chern and Todd character, respectively.
\end{defn}

\begin{rem}
The Chern and Todd characters are actually given by finite sums, as
follows from Remark \ref{rem:chernclassesfinite}. Moreover they are
clearly symmetric in the $\alpha_{i}$'s, so that they are expressible
using the Chern characters.
\end{rem}

\begin{prop}
\label{prop:cherntoddadditive}Let
\[
0\to\F\to\G\to\mathcal{H}\to0
\]
be an exact sequence of locally free coherent $\mathcal{O}_{X}$-modules on
a smooth projective variety $X$. Then 
\begin{align*}
\ch\left(\G\right) & =\ch\left(\F\right)+\ch\left(\H\right)\\
\td\left(\G\right) & =\td\left(\F\right)\circ\td\left(\H\right).
\end{align*}
\end{prop}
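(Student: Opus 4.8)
The plan is to reduce, via the splitting construction of Theorem \ref{thm:splittingconstruction}, to the case in which both $\F$ and $\H$ carry honest Chern roots, where the two identities become immediate manipulations of the defining power series. First I would produce a single variety on which both sheaves split: applying Theorem \ref{thm:splittingconstruction} to $\F$ yields a tower of projective bundles $g\colon Y'\to X$ along which $g\ua\F$ acquires a filtration with line-bundle quotients; applying it again to $g\ua\H$ over $Y'$ gives $h\colon Y\to Y'$ splitting $h\ua g\ua\H$. Setting $f=g\circ h\colon Y\to X$, the pullback $f\ua\F=h\ua g\ua\F$ inherits a filtration by line bundles (pullbacks of line bundles are line bundles), $f\ua\H$ is filtered as well, and $f\ua$ is split injective as a composite of the split injections furnished by Theorem \ref{thm:splittingconstruction}.

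Next I would pull back the exact sequence. Since a short exact sequence of locally free sheaves is locally split, $f\ua$ preserves its exactness, so
\[
0\to f\ua\F\to f\ua\G\to f\ua\H\to0
\]
is again exact with locally free terms. Writing $\beta_{1},\dots,\beta_{s}$ for the Chern roots of $f\ua\F$ and $\gamma_{1},\dots,\gamma_{t}$ for those of $f\ua\H$ (the first Chern classes of the successive line-bundle quotients), repeated application of the Whitney sum formula of Proposition \ref{prop:whitneysum} gives
\[
c(f\ua\G)=c(f\ua\F)\circ c(f\ua\H)=\prod_{i=1}^{s}(1+\beta_{i})\prod_{j=1}^{t}(1+\gamma_{j}).
\]
Thus the multiset of Chern roots of $f\ua\G$ is the concatenation $\{\beta_{i}\}\cup\{\gamma_{j}\}$. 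Feeding this into Definition \ref{defn:ChernToddcharacters}, the sum defining $\ch$ breaks up as $\sum_{i}\exp(\beta_{i})+\sum_{j}\exp(\gamma_{j})$, while the product defining $\td$ factors as $\prod_{i}\frac{\beta_{i}}{1-\exp(-\beta_{i})}\cdot\prod_{j}\frac{\gamma_{j}}{1-\exp(-\gamma_{j})}$, yielding
\[
\ch(f\ua\G)=\ch(f\ua\F)+\ch(f\ua\H),\qquad \td(f\ua\G)=\td(f\ua\F)\circ\td(f\ua\H).
\]

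Finally I would descend these identities from $Y$ to $X$. Because $\ch$ and $\td$ are symmetric in the Chern roots, they are universal finite expressions in the Chern classes, and Remark \ref{rem:splittingtrick} guarantees $c_{k}(f\ua\E)=f\ua c_{k}(\E)$; since $f\ua$ is a ring homomorphism on the Chow groups, it follows that $\ch(f\ua\E)=f\ua\ch(\E)$ and $\td(f\ua\E)=f\ua\td(\E)$ for each of $\E=\F,\G,\H$. The two displayed identities on $Y$ then read $f\ua\ch(\G)=f\ua(\ch(\F)+\ch(\H))$ and $f\ua\td(\G)=f\ua(\td(\F)\circ\td(\H))$, so the split injectivity of $f\ua$ lets me cancel it and obtain the claim on $X$. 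I expect the only genuinely delicate points to be arranging a common splitting variety for $\F$ and $\H$, handled by the two-step tower above, and the compatibility of $\ch,\td$ with pullback; once these are in place the verification is purely formal.
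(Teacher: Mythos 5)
Your proof is correct and follows essentially the same route as the paper's: use the splitting construction to obtain a single morphism $f:Y\to X$ along which the sheaves acquire line-bundle filtrations, pull back the (locally split) exact sequence, apply the Whitney sum formula to identify the Chern roots of $f\ua\G$ as the union of those of $f\ua\F$ and $f\ua\H$, and conclude by a formal rearrangement of the defining power series. The only cosmetic differences are that you split just $\F$ and $\H$ where the paper applies the construction three times, and that you spell out the descent along the split-injective $f\ua$ explicitly, which the paper compresses into the phrase ``formal rearrangement.''
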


\begin{proof}
Applying the splitting construction three times, we find a morphism
$f:Y\to X$ from a projective variety $Y$ over which $\F,\G$ and
$\H$ have a filtration like in Theorem \ref{thm:splittingconstruction}.
There are (at least) two good reasons for which our starting exact
sequence stays exact when pulled-back to $Y$: first note that $f$
is constructed as a sequence of projective bundles, therefore, locally,
it looks like
\[
\P_{A}^{r}\to\spec\left(A\right)
\]
and hence it is flat. Otherwise one notes that we're working with
locally free modules, therefore our sequence is locally split. Since
the functor $f\ua$ is additive, it preserve the exactness of locally
split exact sequences.\footnote{Recall that, when working with $\mathcal{O}_{Y}$-modules, exactness is checked
locally.} In view of Proposition \ref{prop:whitneysum}, we deduce that the
set of Chern roots of $\G$ is just the union of those of $\F$ and
those of $\H$, therefore the Proposition reduces to a formal rearrangement
in the definitions of $\ch\left(\G\right)$ and $\td\left(\G\right)$.
\end{proof}
\noindent\fcolorbox{black}{white}{\begin{minipage}[t]{1\columnwidth - 2\fboxsep - 2\fboxrule}%
\[
\mbox{Black Box: turning the tables}
\]
So far we have defined Chern classes as endomorphisms of $A_{\bullet}\left(X\right)$. To have a more geometric flavour, we shall (and need to) see them as actual elements of $A_{\bullet}\left(X\right)$, and we can address this as follows: for a smooth projective variety $X$ one would like to put a commutative ring structure on $A^{\bullet}\left(X\right)$ taking, for $D,E$ irreducible cycles of codimension $d$ and $e$ respectively, \[ \left[D\right]\cdot\left[E\right]=\sum n_{i}\left[W_{i}\right] \] where the $W_{i}$'s are the irreducible components of $D\cap E$ and the $n_{i}$'s are integers defined in some smart way, called intersection multiplicities. This turns out to be possible and paves the way to a very deep subject, namely the so-called intersection theory. Anyway, in our specific case (that is, smooth projective varieties over an algebraically closed field) the theory goes without any great harm. One may want to page through \cite[Chapters 6,7]{Ful} for the whole geometric picture, while in the book \cite{Ser} an algebraic definition of intersection multiplicity, reminiscent of the ring structure introduced in Proposition \ref{prop:ringtor}, is given and generalized, leading to Serre's Intersection Multiplicity Conjectures, which is at present one of the most outstanding open problem in Commutative and Homological Algebra (see also Section 8 in \cite{Hoc} for a discussion of the problem and the state of the art).

Since $A_{\bullet}\left(X\right)$ and $A^{\bullet}\left(X\right)$ are the same abelian group, this gives a ring structure on $A_{\bullet}\left(X\right)$ where $\left[X\right]$ is the unit element. We will identify $c_i \left(\E\right)$ with $$c_i \left(\E\right)\cdot\left[X\right]\in A_{\bullet}\left(X\right),$$ then all the statements and properties we have seen in this section carry over as statements about cycles.%
\end{minipage}}

\begin{rem}\label{rem:degreebundle}
Now that we view Chern classes as elements of the Chow ring $A_\bullet(X)$, we can define the degree of a locally free sheaf $\E$ as
\[
\deg \left(\E\right) = \deg\left( c_1 \left(\E\right)\right).
\]
Using the splitting principle (Remark \ref{rem:splittingtrick} and Example \ref{exa:formulacheernroots}), one can verify that $$c_1(\E) = c_1(\det \E).$$ Thus, this definition coincides with the classical definition of the degree as the degree of the determinant line bundle.
\end{rem}

\begin{rem}
In view of Proposition \ref{prop:cherntoddadditive}, the previous
trick allows us to define the Chern character $\ch\left(\bullet\right)$
as a group homomorphism
\[
\ch\left(\bullet\right):K_{0}\left(X\right)\to A_{\bullet}\left(X\right)\ot\Q.
\]
Moreover, to a smooth projective variety $X$, we can associate an
element
\[
\td\left(X\right)=\td\left(\mathrm{T}_{X}\right)\in A_{\bullet}\left(X\right)\ot\Q
\]
called the Todd class of $X$, where $\mathrm{T}_{X}$ is the tangent
sheaf of $X$.
\end{rem}

\begin{example}
\label{exa:toddtangentbundlePn}
In view of the multiplicativity of the Todd class (Proposition \ref{prop:cherntoddadditive}) applied to the Euler exact sequence (from Example \ref{exa:tangentbundlePn}) we get:
\[
\mathrm{td}(\mathcal{O}_{\P^{n}}\left(1\right)^{\op n+1}) = \mathrm{td}(\mathcal{O}_{\P^{n}}) \cdot \mathrm{td}(\mathrm{T}_{\P^{n}})
\]
The Todd class of the trivial bundle $\mathrm{td}(\mathcal{O}_{\P^{n}})$ is $1$. For the sum, we use multiplicativity (Proposition \ref{prop:cherntoddadditive}) and the definition of the Todd class for a line bundle (Definition \ref{defn:ChernToddcharacters}) whose first Chern class is $H$:
\[
\mathrm{td}(\mathcal{O}_{\P^{n}}\left(1\right)^{\op n+1}) = \mathrm{td}(\mathcal{O}_{\P^{n}}(1))^{n+1} = \left( \frac{H}{1-\exp(-H)} \right)^{n+1}
\]
Substituting this back, we obtain the formula:
\[
\mathrm{td}\left(\mathrm{T}_{\P^{n}}\right) = \frac{H^{n+1}}{\left(1-\exp\left(-H\right)\right)^{n+1}},
\]
which, as usual, has to be meant as a Taylor expansion in $A_{\bullet}\left(\P^{n}\right)\ot\Q$.
\end{example}
\newpage
\section{Hirzebruch-Riemann-Roch for smooth projective varieties}

The aim of this final section is to state and prove the Hirzebruch-Riemann-Roch
theorem for smooth projective varieties. This is stated as
\begin{thm}
Let $X$ be a smooth projective variety, then the diagram
\[
\xymatrix{K_{0}\left(X\right)\ar[d]_{\chi\left(X,\bullet\right)}\ar[rr]^{\ch\left(\bullet\right)\cdot\td\left(X\right)} &  & A_{\bullet}\left(X\right)\ot\Q\ar[d]^{\int_{X}}\\
\Z\ar@{^{(}->}[rr] &  & \Q
}
\]
is commutative, that is, for every locally free coherent $\mathcal{O}_{X}$-module
$\E$, we have
\[
\chi\left(X,\E\right)=\int_{X}\ch\left(\E\right)\cdot\td\left(X\right).
\]
\end{thm}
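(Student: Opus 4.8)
The plan is to follow Grothendieck's strategy: reduce the absolute statement to two tractable cases by factoring the structure morphism. Both sides define group homomorphisms $K_{0}(X)\to\Q$. The left-hand side is $\chi(X,-)=(f_{*})_{!}$ for $f:X\to\{\ast\}$ (Example \ref{exa:eulercharacteristic}), which is additive on short exact sequences and hence descends to $K_{0}(X)$; the right-hand side is $\int_{X}\circ(\ch(-)\cdot\td(X))$, additive because $\ch(-)$ is a group homomorphism (Proposition \ref{prop:cherntoddadditive}). For $X=\P^{n}$ one can exploit that $K_{0}(\P^{n})$ is generated by line bundles, but a general $X$ admits no such explicit generating set, so instead I would fix a closed embedding $i:X\hookrightarrow\P^{m}$ and factor $f$ as $X\xrightarrow{i}\P^{m}\xrightarrow{p}\{\ast\}$. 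The theorem then follows from two ingredients — HRR for projective space and a Riemann–Roch formula for the closed immersion $i$ — glued by functoriality of $f_{!}$ and of the Chow pushforward.

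First I would settle the case of $\P^{n}$. Since both sides are additive and, by Theorem \ref{thm:K0projectivespace}, $K_{0}(\P^{n})$ is generated as a group by the classes $[\mathcal{O}_{\P^{n}}(a)]$, it suffices to verify the identity on each $\mathcal{O}_{\P^{n}}(a)$. The left-hand side is $\chi(\P^{n},\mathcal{O}_{\P^{n}}(a))=\binom{n+a}{n}$ by Lemma \ref{lem:chiO(a)} (with $a=0$ giving $1$). For the right-hand side, $\ch(\mathcal{O}_{\P^{n}}(a))=e^{aH}$ and $\td(\P^{n})=(H/(1-e^{-H}))^{n+1}$ by Example \ref{exa:toddtangentbundlePn}, while $\int_{\P^{n}}$ extracts the coefficient of $H^{n}$ (as $\int_{\P^{n}}H^{n}=1$). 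Thus the right-hand side is the coefficient of $H^{n}$ in $e^{aH}(H/(1-e^{-H}))^{n+1}$, which I would evaluate as the residue $\mathrm{Res}_{H=0}\,e^{aH}(1-e^{-H})^{-(n+1)}\,dH$; the substitution $t=1-e^{-H}$ turns this into $[t^{n}](1-t)^{-(a+1)}=\binom{n+a}{n}$, matching the left-hand side.

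Next I would treat the closed immersion. Writing $\mathcal{N}$ for the normal bundle of $X$ in $\P^{m}$, the key is the Riemann–Roch formula for closed immersions,
\[
\ch(i_{!}\E)=i_{*}\!\left(\ch(\E)\cdot\td(\mathcal{N})^{-1}\right)\quad\text{in } A_{\bullet}(\P^{m})\ot\Q .
\]
Granting this, the tangent sequence $0\to\mathrm{T}_{X}\to i^{*}\mathrm{T}_{\P^{m}}\to\mathcal{N}\to0$ gives $i^{*}\td(\P^{m})=\td(X)\cdot\td(\mathcal{N})$ via Proposition \ref{prop:cherntoddadditive}, so $\td(\mathcal{N})^{-1}=\td(X)\cdot i^{*}\td(\P^{m})^{-1}$; substituting and using the projection formula (Proposition \ref{prop:propertieschernclasses}) yields the Grothendieck–Riemann–Roch equality
\[
\ch(i_{!}\E)\cdot\td(\P^{m})=i_{*}\!\left(\ch(\E)\cdot\td(X)\right).
\]
Applying $\int_{\P^{m}}$ to both sides concludes: the left-hand side is $\chi(\P^{m},i_{!}\E)$ by the already-settled case of $\P^{m}$ (note $\P^{m}$ is smooth, so $i_{!}\E\in K_{0}(\P^{m})$ makes sense by Definition \ref{defn:fshriek}), and this equals $\chi(X,\E)$ by Proposition \ref{prop:fshriek}; the right-hand side is $\int_{X}\ch(\E)\td(X)$ because degree is preserved under proper pushforward, i.e. $\int_{\P^{m}}\circ\, i_{*}=\int_{X}$ (Lemma \ref{lem:pushforwardfunctorial} and Remark \ref{rem:degree}).

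The main obstacle is the closed-immersion formula itself. I would prove it by reducing, via deformation to the normal cone, to the zero-section embedding $X\hookrightarrow\mathcal{N}$, where $i_{!}\mathcal{O}_{X}$ is resolved by a Koszul complex $\Lambda^{\bullet}(\mathrm{pr}^{*}\mathcal{N}^{\vee})$. The computation then rests on the identity $\sum_{j}(-1)^{j}\ch(\Lambda^{j}\mathcal{N}^{\vee})=\prod_{k}(1-e^{-\beta_{k}})=c_{d}(\mathcal{N})\cdot\td(\mathcal{N})^{-1}$ (with $\beta_{1},\dots,\beta_{d}$ the Chern roots of $\mathcal{N}$, $d=\mathrm{rk}\,\mathcal{N}$, using Example \ref{exa:formulacheernroots}), together with the self-intersection formula $i^{*}i_{*}(-)=c_{d}(\mathcal{N})\cdot(-)$; the splitting construction (Theorem \ref{thm:splittingconstruction}) reduces the root identity to a one-variable check. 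Since deformation to the normal cone is not developed in the preceding sections, this is the step where a genuinely new geometric input is required and the part I expect to be hardest to make self-contained; the pragmatic alternative is to take the closed-immersion Riemann–Roch theorem as a black box, citing \cite{Ful}.
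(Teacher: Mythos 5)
Your outline reproduces the paper's proof step for step: the $\P^{n}$ case via the residue computation, the reduction of the general case to a Riemann--Roch formula for a closed embedding $j:X\to\P^{m}$ (this is exactly the content of Lemmas \ref{lem:enoughclosedembedding} and \ref{lem:enoughtoverifythis}), and the Koszul-complex computation of the alternating sum of Chern characters for the zero-section case (Lemma \ref{lem:specialcaseprojectivebundle}; note the paper too invokes \cite{Ful} there for the pushforward identity that plays the role of your self-intersection formula). The only divergence is at the final step: you declare the deformation to the normal cone to be ``genuinely new geometric input'' and propose to black-box the closed-immersion formula via \cite{Ful}. That is the one genuine gap, and it is precisely the part that constitutes the paper's actual proof of Theorem \ref{thm:HRR} --- and it needs nothing beyond tools already on the table, so you should not concede it.

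Concretely, the paper sets $M=\mathrm{Bl}_{X\times\left\{ 0\right\} }\left(\P^{n}\times\P^{1}\right)$ with projections $\pi:M\to\P^{n}\times\P^{1}$ and $q:M\to\P^{1}$, and lifts $j\times\mathrm{id}$ to a map $v:X\times\P^{1}\to M$ (writing $p:X\times\P^{1}\to X$ for the projection). The fibre of $q$ over $\infty$ is $\P^{n}$ with $v_{\infty}=j$, while the fibre over $0$ splits as $E\cup Y$ with $E\simeq\P_{X}\left(\mathcal{N}_{X|\P^{n}}\op\mathcal{O}_{X}\right)$ and $v_{0}$ exactly the embedding of Lemma \ref{lem:specialcaseprojectivebundle} --- note that one deforms into this projective completion, not into the total space of $\mathcal{N}$ as you wrote, since the latter is not projective. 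Choosing a finite locally free resolution $\F_{\bullet}$ of $v_{\ast}p^{\ast}\E$ on the smooth projective variety $M$ and using the rational equivalence $\left[0\right]\sim\left[\infty\right]$ on $\P^{1}$, one gets
\[
\sum_{k}\left(-1\right)^{k}\ch\left(\F_{k}\right)\cdot q^{\ast}\left(\left[0\right]-\left[\infty\right]\right)=0,
\]
with $q^{\ast}\left[0\right]=\left[E\right]+\left[Y\right]$ and $q^{\ast}\left[\infty\right]=\left[\P^{n}\right]$. The $Y$-term vanishes because $v_{\ast}p^{\ast}\E$ restricts to zero on $Y$, the $E$-term is computed by the special case, and pushing forward along the blow-down $g:M\to\P^{n}$ (writing $i_{E}$, $i_{\infty}$ for the inclusions of the components, so that $g\circ i_{E}\circ v_{0}=j$ and $g\circ i_{\infty}=\mathrm{id}$) yields exactly your formula $\ch\left(j_{!}\E\right)=j_{\ast}\bigl(\ch\left(\E\right)\cdot\td\left(\mathcal{N}_{X|\P^{n}}\right)^{-1}\bigr)$, i.e.\ the hypothesis of Lemma \ref{lem:enoughtoverifythis}. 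So the ``new input'' amounts to one blow-up plus the linear equivalence of two points on $\P^{1}$; outsourcing it to \cite{Ful} would remove the heart of Grothendieck's argument from the proof.
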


\subsection{The case of $\protect\P^{n}$}
\begin{thm}
The diagram
\[
\xymatrix{K_{0}\left(\P^{n}\right)\ar[d]_{\chi\left(\P^{n},\bullet\right)}\ar[rr]^{\ch\left(\bullet\right)\cdot\td\left(\P^{n}\right)} &  & A_{\bullet}\left(\P^{n}\right)\ot\Q\ar[d]^{\int_{\P^{n}}}\\
\Z\ar@{^{(}->}[rr] &  & \Q
}
\]
is commutative.
\end{thm}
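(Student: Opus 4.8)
The plan is to exploit that both vertical maps in the diagram are group homomorphisms and then to test the identity on a convenient set of generators of $K_{0}(\P^{n})$. The left map $\chi(\P^{n},\bullet)$ is additive on short exact sequences by the very definition of the Euler characteristic, while the composite $\int_{\P^{n}}\circ\left(\ch(\bullet)\cdot\td(\P^{n})\right)$ is additive because $\ch$ is a group homomorphism (Proposition \ref{prop:cherntoddadditive}) and both multiplication by the fixed element $\td(\P^{n})$ and the degree map $\int_{\P^{n}}$ are $\Z$-linear. Hence it suffices to check commutativity on a generating set of $K_{0}(\P^{n})$. By Theorem \ref{thm:K0projectivespace} the group $K_{0}(\P^{n})$ is generated by $1,\xi,\dots,\xi^{n}$ with $\xi=[\mathcal{O}_{\P^{n}}(1)]$, and since the ring structure gives $\xi^{m}=[\mathcal{O}_{\P^{n}}(1)^{\ot m}]=[\mathcal{O}_{\P^{n}}(m)]$, it is enough to verify the formula for the line bundles $\E=\mathcal{O}_{\P^{n}}(m)$ with $0\le m\le n$. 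For such $m$ the left-hand side is immediate: Lemma \ref{lem:chiO(a)} (together with $\chi(\P^{n},\mathcal{O}_{\P^{n}})=1$) gives $\chi\left(\P^{n},\mathcal{O}_{\P^{n}}(m)\right)=\binom{n+m}{n}$.

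For the right-hand side I would first assemble the two factors. Since $c_{1}(\mathcal{O}_{\P^{n}}(m))=mH$, where $H$ is the hyperplane class, the single Chern root of $\mathcal{O}_{\P^{n}}(m)$ is $mH$ and so $\ch(\mathcal{O}_{\P^{n}}(m))=\exp(mH)$; on the other hand Example \ref{exa:toddtangentbundlePn} gives $\td(\P^{n})=\left(H/(1-\exp(-H))\right)^{n+1}$. Because $\int_{\P^{n}}H^{n}=1$ while $H^{k}=0$ for $k>n$, the map $\int_{\P^{n}}$ simply extracts the coefficient of $H^{n}$ from a power series in $H$. Thus the claim reduces to the purely formal identity
\[
[H^{n}]\left(\exp(mH)\left(\frac{H}{1-\exp(-H)}\right)^{n+1}\right)=\binom{n+m}{n}.
\]
I would prove this by a residue computation: writing the coefficient as $\frac{1}{2\pi i}\oint\exp(mH)\left(1-\exp(-H)\right)^{-(n+1)}\,dH$ around $0$ and substituting $u=1-\exp(-H)$ (so that $\exp(mH)=(1-u)^{-m}$ and $dH=(1-u)^{-1}\,du$) turns the integral into $[u^{n}](1-u)^{-m-1}=\binom{n+m}{n}$.

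I expect this last coefficient extraction to be the only real obstacle; everything else is bookkeeping. The substitution can be carried out entirely at the level of formal power series, via the change-of-variables rule for formal residues, so no genuine analysis is needed. As a \emph{sanity check} one may note that both sides are polynomials in $m$ of degree at most $n$, so it would also suffice to verify their agreement at the $n+1$ points $m=0,\dots,n$. Once the formal identity is established the two computations coincide on every generator $\mathcal{O}_{\P^{n}}(m)$, and by the additivity observed at the outset the diagram commutes on all of $K_{0}(\P^{n})$.
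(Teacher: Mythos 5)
Your proof is correct and follows essentially the same route as the paper: reduce to the generators $[\mathcal{O}_{\P^{n}}(m)]$, $0\le m\le n$, of $K_{0}(\P^{n})$, compute $\chi$ via Lemma \ref{lem:chiO(a)} and the right-hand side via $\ch(\mathcal{O}_{\P^{n}}(m))=\exp(mH)$ and Example \ref{exa:toddtangentbundlePn}, then extract the coefficient of $H^{n}$ by the residue substitution $u=1-\exp(-H)$ --- exactly the trick the paper carries out in its footnote. Your additional remarks (formal-power-series justification of the residue change of variables, and the polynomial-in-$m$ sanity check) are fine but not a departure from the paper's argument.
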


\begin{proof}
Let $\F$ be a locally free coherent $\mathcal{O}_{\P^{n}}$-module, we want
to show that
\[
\chi\left(\P^{n},\F\right)=\int_{\P^{n}}\ch\left(\F\right)\cdot\td\left(\P^{n}\right).
\]
In view of Theorem \ref{thm:K0projectivespace} and the fact that
$\ch$ is a group homomorphism, it is enough to show it for $\F=\mathcal{O}_{\P^{n}}\left(d\right)$
with $d=0,\dots,n$. On the one hand we know from Lemma \ref{lem:chiO(a)}
that 
\[
\chi\left(\P^{n},\mathcal{O}_{\P^{n}}\left(d\right)\right)=\binom{n+d}{n},
\]
on the other hand $\ch\left(\mathcal{O}_{\P^{n}}\left(d\right)\right)=\exp\left(dH\right)$
where $H\su\P^{n}$ is a hyperplane. In view of Example \ref{exa:toddtangentbundlePn}
we have
\[
\td\left(\P^{n}\right)=\frac{H^{n+1}}{\left(1-\exp\left(-H\right)\right)^{n+1}},
\]
and hence\footnote{To show the equality
\[
\int_{\P^{n}}\ch\left(\mathcal{O}_{\P^{n}}\left(d\right)\right)\cdot\td\left(\P^{n}\right)=\binom{n+d}{n}
\]
one uses the following trick: the point is to compute the $n$-th
coefficient in the Taylor expansion of the function $f\left(x\right)=e^{dx}\dfrac{x^{n+1}}{\left(1-e^{-x}\right)^{n+1}}$, but this is exactly the residue $\mathrm{res}_{0}\left(\dfrac{e^{dx}}{\left(1-e^{-x}\right)^{n+1}}\right)$,
which can be computed by the change of variables $y=1-e^{-x}$.}
\[
\int_{\P^{n}}\ch\left(\mathcal{O}_{\P^{n}}\left(d\right)\right)\cdot\td\left(\P^{n}\right)=\binom{n+d}{n}=\chi\left(\P^{n},\mathcal{O}_{\P^{n}}\left(d\right)\right).
\]
\end{proof}

\subsection{The case of smooth projective varieties}
\begin{lem}
\label{lem:enoughclosedembedding}Let $X$ be a smooth projective
variety and let $j:X\to\P^{n}$ be a closed embedding. If the diagram
\[
\xymatrix{K_{0}\left(X\right)\ar[rr]^{\ch\left(\bullet\right)\cdot\td\left(X\right)}\ar[d]_{j_{!}} &  & A_{\bullet}\left(X\right)\ot\Q\ar[d]^{j\da\ot\Q}\\
K_{0}\left(\P^{n}\right)\ar[rr]_{\ch\left(\bullet\right)\cdot\td\left(\P^{n}\right)} &  & A_{\bullet}\left(\P^{n}\right)\ot\Q
}
\]
is commutative, then for every locally free coherent $\mathcal{O}_{X}$-module
$\E$, we have
\[
\chi\left(X,\E\right)=\int_{X}\ch\left(\E\right)\cdot\td\left(X\right).
\]
\end{lem}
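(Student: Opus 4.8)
The plan is to bootstrap from the Hirzebruch--Riemann--Roch formula for $\P^{n}$ established in the previous subsection to the formula for $X$, using the closed embedding $j\colon X\to\P^{n}$ as the transport mechanism. The hypothesis is precisely a \emph{relative} compatibility: it says that pushing a Chern-character computation forward along $j$ (the right-then-down path) agrees with first pushing the $K$-theory class forward and only then computing on $\P^{n}$ (the down-then-right path). Combining this relative statement with the \emph{absolute} one on $\P^{n}$, together with the functoriality of proper pushforward, will pin down $\chi\left(X,\E\right)$.

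Concretely, I would argue as follows. First, since both $\ch\left(\bullet\right)\cdot\td\left(\P^{n}\right)$ and $\chi\left(\P^{n},\bullet\right)$ are additive on short exact sequences, the formula for $\P^{n}$ extends from locally free sheaves to the whole group $K_{0}\left(\P^{n}\right)$; in particular it applies to the class $j_{!}\E$, giving $\chi\left(\P^{n},j_{!}\E\right)=\int_{\P^{n}}\ch\left(j_{!}\E\right)\cdot\td\left(\P^{n}\right)$. Next I would rewrite each side. For the left side, Proposition \ref{prop:fshriek} yields $\chi\left(\P^{n},j_{!}\E\right)=\chi\left(X,\E\right)$, which is exactly the quantity we are after. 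For the right side, commutativity of the hypothesized square reads $\ch\left(j_{!}\E\right)\cdot\td\left(\P^{n}\right)=j\da\bigl(\ch\left(\E\right)\cdot\td\left(X\right)\bigr)$ in $A_{\bullet}\left(\P^{n}\right)\ot\Q$.

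It then remains to collapse $\int_{\P^{n}}\circ\,j\da$ to $\int_{X}$. This is an instance of the functoriality of pushforward (Lemma \ref{lem:pushforwardfunctorial}) applied to the composite $X\overset{j}{\to}\P^{n}\to\left\{ \ast\right\}$: since $\int_{X}$ and $\int_{\P^{n}}$ are by definition the pushforwards to a point, we obtain $\int_{X}=\int_{\P^{n}}\circ\,j\da$. Chaining the identities gives
\[
\int_{X}\ch\left(\E\right)\cdot\td\left(X\right)=\int_{\P^{n}}j\da\bigl(\ch\left(\E\right)\cdot\td\left(X\right)\bigr)=\int_{\P^{n}}\ch\left(j_{!}\E\right)\cdot\td\left(\P^{n}\right)=\chi\left(\P^{n},j_{!}\E\right)=\chi\left(X,\E\right),
\]
which is the desired equality.

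There is no deep obstacle hidden in this lemma itself: all the substance has been front-loaded into the hypothesis (the commuting square) and into the prior results. The step demanding the most care is merely the verification that $\int_{\P^{n}}\circ\,j\da=\int_{X}$, which hinges on recognizing both integrals as proper pushforwards to the point and invoking their functoriality. The genuine difficulty of the whole theorem is thereby displaced entirely onto establishing the commutative square---the relative Riemann--Roch statement for a closed embedding into projective space---which is where the real work of the next result must lie.
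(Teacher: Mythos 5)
Your proposal is correct and follows essentially the same route as the paper's own proof: both reduce the claim via Proposition \ref{prop:fshriek} (so that $\chi\left(\P^{n},j_{!}\E\right)=\chi\left(X,\E\right)$), apply the already-proved Hirzebruch--Riemann--Roch theorem on $\P^{n}$ to the class $j_{!}\E$, use the hypothesized square to rewrite the right-hand side, and collapse $\int_{\P^{n}}\circ\,j_{\ast}$ to $\int_{X}$ by the functoriality of pushforward (Lemma \ref{lem:pushforwardfunctorial}). Your explicit remark that the formula on $\P^{n}$ extends to all of $K_{0}\left(\P^{n}\right)$ by additivity is a minor point the paper leaves implicit, but it is the same argument.
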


\begin{proof}
We see, in view of Proposition \ref{prop:fshriek}, that the composition
\[
\xymatrix{K_{0}\left(X\right)\ar[r]^{j_{!}} & K_{0}\left(\P^{n}\right)\ar[rr]^{\chi\left(\P^{n},\bullet\right)} &  & \Z}
\]
is just $\chi\left(X,\bullet\right)$. We are then reduced to showing that,
for $\alpha\in A_{\bullet}\left(X\right)$, we have
\[
\int_{Y}j_{\ast}\left(\alpha\right)=\int_{X}\alpha,
\]
but, in view of the very definition of $\int$, this is Lemma \ref{lem:pushforwardfunctorial}.
\end{proof}
\begin{lem}
\label{lem:enoughtoverifythis}Let $j:Y\to X$ be a closed embedding
between smooth projective varieties. If, for every locally free coherent
$\mathcal{O}_{Y}$-module $\E$, there exists a locally free coherent resolution
\[
0\to\F_{r}\to\dots\to\F_{1}\to\F_{0}\to j_{\ast}\E\to0
\]
by $\mathcal{O}_{X}$-modules such that
\[
\sum_{k=0}^{r}\left(-1\right)^{k}\ch\left(\F_{k}\right)=j\da\left(\frac{\ch\left(\E\right)}{\td\left(\mathcal{N}_{X|Y}\right)}\right),
\]
then the diagram
\[
\xymatrix{K_{0}\left(Y\right)\ar[rr]^{\ch\left(\bullet\right)\cdot\td\left(Y\right)}\ar[d]_{j_{!}} &  & A_{\bullet}\left(Y\right)\ot\Q\ar[d]^{j_{\ast}\ot\Q}\\
K_{0}\left(X\right)\ar[rr]_{\ch\left(\bullet\right)\cdot\td\left(X\right)} &  & A_{\bullet}\left(X\right)\ot\Q
}
\]
is commutative.
\end{lem}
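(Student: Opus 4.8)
The plan is to evaluate the two composites around the square on a class $[\E]$ and match them: the down-then-right route gives $\ch(j_![\E])\cdot\td(X)$ while the right-then-down route gives $j_\ast(\ch(\E)\cdot\td(Y))$, so it suffices to prove these coincide in $A_\bullet(X)\ot\Q$. I would begin with the left vertical map. By Proposition \ref{prop:fshriek} the closed embedding satisfies $j_![\E]=[j_\ast\E]$ in $K_0(X)$, and since $X$ is smooth Theorem \ref{thm:K0generatedbyprojective} identifies $K_0(X)=G_0(X)$; the given locally free resolution then lets me write $[j_\ast\E]=\sum_{k=0}^r(-1)^k[\F_k]$ by Lemma \ref{lem:alternatesumvanishes}. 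As $\ch$ is a group homomorphism, applying it and invoking the hypothesis yields
\[
\ch(j_![\E])=\sum_{k=0}^r(-1)^k\ch(\F_k)=j_\ast\!\left(\frac{\ch(\E)}{\td(\mathcal{N}_{X|Y})}\right).
\]

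Next I would multiply both sides by $\td(X)$ and push this factor inside the pushforward using the projection formula $j_\ast(\gamma)\cdot\delta=j_\ast(\gamma\cdot j^\ast\delta)$ for the Chow ring, obtaining
\[
\ch(j_![\E])\cdot\td(X)=j_\ast\!\left(\frac{\ch(\E)}{\td(\mathcal{N}_{X|Y})}\cdot j^\ast\td(X)\right).
\]
To finish I identify $j^\ast\td(X)=\td(j^\ast\mathrm{T}_X)$ by naturality of the Todd class under pullback (Remark \ref{rem:splittingtrick}) and apply the multiplicativity of Proposition \ref{prop:cherntoddadditive} to the tangent exact sequence $0\to\mathrm{T}_Y\to j^\ast\mathrm{T}_X\to\mathcal{N}_{X|Y}\to 0$, which gives $j^\ast\td(X)=\td(Y)\cdot\td(\mathcal{N}_{X|Y})$. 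Substituting and cancelling the factor $\td(\mathcal{N}_{X|Y})$ leaves exactly $j_\ast(\ch(\E)\cdot\td(Y))$, closing the square.

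The main obstacle is the projection formula for $j_\ast$ relative to the intersection product on $A_\bullet(X)$: unlike the Segre- and Chern-class projection formulas of Propositions \ref{prop:propertiessegreclasses} and \ref{prop:propertieschernclasses}, this concerns the ring structure of the Black Box and rests on the deeper intersection-theoretic machinery (see \cite{Ful}). A secondary point worth recording is that smoothness of both $Y$ and $X$ makes $\mathcal{N}_{X|Y}$ locally free, so $\td(\mathcal{N}_{X|Y})$ is defined and, having leading term $1$, invertible in $A_\bullet(Y)\ot\Q$; this is precisely what justifies the division in the hypothesis and the cancellation above.
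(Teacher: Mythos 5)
Your proof is correct and follows essentially the same route as the paper: express $\ch\left(j_{!}\left[\E\right]\right)$ through the resolution and the hypothesis, push $\td\left(X\right)$ inside $j_{\ast}$ by the projection formula, and conclude with the identity $j^{\ast}\td\left(X\right)=\td\left(Y\right)\cdot\td\left(\mathcal{N}_{X|Y}\right)$ coming from the tangent--normal exact sequence. The only cosmetic difference is that the paper justifies the projection-formula step by its own Proposition \ref{prop:propertieschernclasses} (the operator form for Chern classes, applied to the power series $\td$), rather than treating it as external intersection-theoretic machinery as you do.
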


\begin{proof}
We have an exact sequence
\[
0\to\mathrm{T}_{Y}\to j^{\ast}\mathrm{T}_{X}\to\mathcal{N}_{Y|X}\to0
\]
of locally free $\mathcal{O}_{Y}$-modules, therefore, in view of Proposition
\ref{prop:cherntoddadditive} and Remark \ref{rem:splittingtrick}
we see that
\begin{equation}
j^{\ast}\td\left(X\right)=\td\left(j^{\ast}\mathrm{T}_{X}\right)=\td\left(Y\right)\cdot\td\left(\mathcal{N}_{Y|X}\right).\label{eq:exactsequencetangent-1}
\end{equation}
We need to see that, for $y\in K_{0}\left(Y\right)$, the equality
\[
j\da\left(\ch\left(y\right)\cdot\td\left(Y\right)\right)=\ch\left(j_{!}\left(y\right)\right)\cdot\td\left(X\right)
\]
holds. In view of Theorem \ref{thm:K0generatedbyprojective},
we only need to check it when $y$ is the class of a locally free
coherent $\mathcal{O}_{Y}$-module $\E$. Let
\[
0\to\F_{r}\to\dots\to\F_{1}\to\F_{0}\to j_{\ast}\E\to0
\]
be a resolution as in the statement, then we have
\[
j_{!}\left[\E\right]=\sum_{k=0}^{r}\left(-1\right)^{k}\left[\F_{k}\right].
\]
In view of Proposition \ref{prop:cherntoddadditive} we have
\begin{align*}
\ch\left(j_{!}\left[\E\right]\right)\cdot\td\left(X\right) & =\ch\left(\sum_{k=0}^{r}\left(-1\right)^{k}\left[\F_{k}\right]\right)\cdot\td\left(X\right)\\
 & =\sum_{k=0}^{r}\left(-1\right)^{k}\ch\left(\F_{k}\right)\cdot\td\left(X\right)\\
 & =j\da\left(\ch\left(\E\right)\cdot\td\left(\mathcal{N}_{X|Y}\right)^{-1}\right)\cdot\td\left(X\right)\\
 & \overset{\ast}{=}j\da\left(\ch\left(\E\right)\cdot\td\left(\mathcal{N}_{X|Y}\right)^{-1}\cdot j^{\ast}\td\left(X\right)\right)\\
 & =j\da\left(\ch\left(\E\right)\cdot\td\left(Y\right)\right)
\end{align*}
where the equality marked with $\ast$ follows from Projection Formula
in Proposition \ref{prop:propertieschernclasses} and the last one
is Formula (\ref{eq:exactsequencetangent-1}).
\end{proof}
We need an intermediate step
\begin{lem}
\label{lem:specialcaseprojectivebundle}Let $X$ be a smooth projective
variety and let $\E$ be a locally free coherent sheaf of rank $r$,
set $Y=\P_{X}\left(\E\op\mathcal{O}_{X}\right)$. Then the diagram
\[
\xymatrix{K_{0}\left(X\right)\ar[rr]^{\ch\left(\bullet\right)\cdot\td\left(X\right)}\ar[d]_{i_{!}} &  & A_{\bullet}\left(X\right)\ot\Q\ar[d]^{i\da\ot\Q}\\
K_{0}\left(Y\right)\ar[rr]_{\ch\left(\bullet\right)\cdot\td\left(Y\right)} &  & A_{\bullet}\left(Y\right)\ot\Q
}
\]
is commutative, where the map $i:X\to Y$ is the one induced by the
inclusion $\mathcal{O}_{X}\su\E\op\mathcal{O}_{X}$. 
\end{lem}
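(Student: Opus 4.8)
The plan is to verify the hypothesis of Lemma~\ref{lem:enoughtoverifythis} for the closed embedding $i:X\to Y$; everything then reduces to exhibiting, for each locally free coherent $\mathcal{O}_X$-module $\F$, a locally free resolution of $i_\ast\F$ on $Y$ whose alternating Chern character equals $i\da\!\left(\ch(\F)/\td(\mathcal{N})\right)$, where $\mathcal{N}$ denotes the normal bundle of $X$ in $Y$. The first thing I would settle is $\mathcal{N}$ itself. Since $i$ is a section of the smooth structure map $\pi:Y\to X$, its normal bundle is the relative tangent bundle restricted along $i$, and a short computation from the tautological sequence on $Y=\P_X(\E\op\mathcal{O}_X)$ identifies it as $\mathcal{N}\cong\E$; in particular $\td(\mathcal{N})=\td(\E)$.

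Next I would construct the resolution as a Koszul complex. Writing $V=\E\op\mathcal{O}_X$ and $\mathcal{O}_Y(-1)\hookrightarrow\pi\ua V$ for the tautological subbundle, the composite of this inclusion with the projection $\pi\ua V\to\pi\ua\E$ is a global section $s$ of the rank-$r$ bundle $\G:=\mathcal{O}_Y(1)\ot\pi\ua\E$. Fibrewise, $s$ vanishes exactly where the tautological line coincides with the $\mathcal{O}_X$-summand, that is, exactly along $i(X)$; as $Y$ is smooth and $i(X)$ has codimension $r=\mathrm{rk}(\G)$, the section is regular and $\G|_{i(X)}\cong i\ua\G\cong\mathcal{N}$. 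The Koszul complex $\wedge\ub\G^\vee$ contracted against $s$ therefore resolves $i_\ast\mathcal{O}_X$, and tensoring it with $\pi\ua\F$ keeps it exact while, by the projection formula together with $\pi i=\mathrm{id}$, producing $i_\ast\F$ as cokernel. This gives the resolution with terms $\F_k=\wedge^k\G^\vee\ot\pi\ua\F$.

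Finally I would match the two sides. If $\beta_1,\dots,\beta_r$ are the Chern roots of $\G$, then the splitting principle (Example~\ref{exa:formulacheernroots}) and the fact that $\ch$ is a ring homomorphism give
\[
\sum_{k=0}^r(-1)^k\ch(\F_k)=\Big(\prod_{j=1}^r\big(1-\exp(-\beta_j)\big)\Big)\cdot\pi\ua\ch(\F).
\]
On the other side, $\mathcal{N}\cong i\ua\G$ yields $\td(\mathcal{N})=i\ua\td(\G)$ and $\ch(\F)=i\ua\big(\pi\ua\ch(\F)\big)$, so the projection formula (Proposition~\ref{prop:propertieschernclasses}) rewrites
\[
i\da\!\Big(\frac{\ch(\F)}{\td(\mathcal{N})}\Big)=\frac{\pi\ua\ch(\F)}{\td(\G)}\cdot i\da(1).
\]
These two expressions agree once one invokes the self-intersection identity $i\da(1)=c_r(\G)=\prod_j\beta_j$, since $\prod_j\beta_j/\td(\G)=\prod_j\big(1-\exp(-\beta_j)\big)$ straight from the definition of the Todd class.

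I expect the main obstacle to be the geometric core of the middle step: confirming that $s$ cuts out $i(X)$ regularly, so that the Koszul complex is genuinely a resolution and $\G$ genuinely restricts to the normal bundle, together with the self-intersection formula $i\da(1)=c_r(\G)$ (the zero scheme of a regular section represents its top Chern class). By comparison, the two Chern-character computations are purely formal manipulations with Chern roots and the Todd series.
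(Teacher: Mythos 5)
Your proposal is correct and follows the same skeleton as the paper's proof: reduce to Lemma \ref{lem:enoughtoverifythis}, resolve $i_{\ast}\F$ by the Koszul complex of a regular section of a rank-$r$ bundle that restricts to the normal bundle of $i(X)$, and finish with a Chern-root computation. The differences are in execution. The paper builds its Koszul complex from the section $s^{\lor}:\mathcal{O}_{Y}\to\mathcal{L}^{\lor}$, where $\mathcal{L}$ is the kernel of $\pi\ua\left(\E\op\mathcal{O}_{X}\right)\to\mathcal{O}_{Y}\left(1\right)$, identifies the normal bundle algebraically via $\mathrm{Tor}_{1}^{\mathcal{O}_{Y}}\left(\mathcal{O}_{X},\mathcal{O}_{X}\right)\simeq\mathcal{I}/\mathcal{I}^{2}$ (Remark \ref{rem:koszultor}), and then quotes \cite[Section 14.1]{Ful} wholesale for the pushforward identity $i\da\left(\ch\left(\F\right)\td\left(i\ua\mathcal{L}^{\lor}\right)^{-1}\right)=\ch\left(\pi\ua\F\right)\td\left(\mathcal{L}^{\lor}\right)^{-1}c_{r}\left(\mathcal{L}^{\lor}\right)$. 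You instead work with $\G=\mathcal{O}_{Y}\left(1\right)\ot\pi\ua\E$, identify the normal bundle geometrically (section of a smooth fibration, respectively regular zero locus), and replace the citation by the projection formula combined with the self-intersection formula $i\da\left(1\right)=c_{r}\left(\G\right)$. This is a more transparent decomposition, and it makes visible that the paper's cited formula is exactly ``projection formula plus self-intersection'' in disguise; but be aware that the self-intersection formula (the zero scheme of a regular section represents the top Chern class) is itself a nontrivial intersection-theoretic input, neither proved nor stated in the paper, so you have traded one black box for an essentially equivalent one. Two details you gloss over: first, you verify only set-theoretically that $Z\left(s\right)=i\left(X\right)$, whereas to conclude that the Koszul complex resolves $i\da\mathcal{O}_{X}$ you need the scheme-theoretic equality (a local computation in a trivialization, or Cohen-Macaulayness plus generic reducedness); second, your identification $\mathcal{N}\simeq\E$ uses the ``lines in $\E\op\mathcal{O}_{X}$'' convention for the projective bundle, while the paper's $\underline{\proj}_{X}\left(\mathrm{Sym}\right)$ convention yields $\mathcal{N}\simeq i\ua\mathcal{L}^{\lor}$; this discrepancy is harmless here because your argument only ever uses $\mathcal{N}\simeq i\ua\G$, never the identification with $\E$ itself.
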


\begin{proof}
Note that $i$ is a section for the structure map $\pi:Y\to X$, moreover
from Remark \ref{rem:canonicalsheafprojectivebundle} we have an exact
sequence of locally free sheaves
\[
0\to\mathcal{L}\to\pi\ua\left(\E\op\mathcal{O}_{X}\right)\to\mathcal{O}_{Y}\left(1\right)\to0
\]
which, dualized, gives\footnote{\begin{xca*}
Show that $\pi\ua\left(\E^{\lor}\right)\simeq\left(\pi\ua\E\right)^{\lor}$.
\end{xca*}
}
\[
0\to\mathcal{O}_{Y}\left(-1\right)\to\pi\ua\left(\E^{\lor}\op\mathcal{O}_{X}\right)\to\mathcal{L}^{\lor}\to0.
\]
Note that $\pi\ua\left(\E\op\mathcal{O}_{X}\right)$ has a canonical section
\[
\mathcal{O}_{Y}=\pi\ua\mathcal{O}_{X}\to\pi\ua\left(\E^{\lor}\op\mathcal{O}_{X}\right).
\]
Note that, composing it with the map $\pi\ua\left(\E^{\lor}\op\mathcal{O}_{X}\right)\to\mathcal{L}^{\lor}$,
we get a section $s^{\lor}:\mathcal{O}_{Y}\to\mathcal{L}^{\lor}$ whose zero
locus is really $i\left(X\right)\simeq X$. We also get an exact sequence
on $Y\backslash X$ (see Proposition \ref{prop:zerolocusimage})
\[
0\to\bigwedge^{r}\mathcal{L}\to\dots\to\bigwedge^{2}\mathcal{L}\to\mathcal{L}\overset{s}{\longrightarrow}\mathcal{O}_{Y}\to\mathcal{O}_{Z\left(s\right)}\to0.
\]
 Let $\F$ be a locally free coherent $\mathcal{O}_{X}$-module, then 
\begin{align*}
\pi\ua\F\ot\mathcal{O}_{Z\left(s\right)} & \simeq\pi\ua\F\ot i\da\mathcal{O}_{X}\\
 & \simeq i\da\left(i\ua\pi\ua\F\ot\mathcal{O}_{X}\right)\\
 & \simeq i\da\F
\end{align*}
and hence we get a locally free resolution on $Y$
\[
0\to\pi\ua\F\ot\bigwedge^{r}\mathcal{L}\to\dots\to\pi\ua\F\ot\bigwedge^{2}\mathcal{L}\to\pi\ua\F\ot\mathcal{L}\overset{s}{\longrightarrow}\pi\ua\F\to i\da\F\to0.
\]
In order to apply Lemma \ref{lem:enoughtoverifythis} we need to see
that
\[
\sum_{k=0}^{r}\left(-1\right)^{k}\ch\left(\pi\ua\F\ot\bigwedge^{k}\mathcal{L}\right)=i\da\left(\frac{\ch\left(\F\right)}{\td\left(\mathcal{N}_{Y|X}\right)}\right).
\]
First we show that $\mathcal{N}_{Y|X}\simeq i\ua\mathcal{L}^{\lor}$:
we have an exact sequence 
\[
0\to\mathcal{I}\to\mathcal{O}_{Y}\to\mathcal{O}_{X}\to0
\]
on $Y$, giving that 
\[
\mathrm{Tor}_{1}^{\mathcal{O}_{Y}}\left(\mathcal{O}_{X},\mathcal{O}_{X}\right)\simeq\frac{\mathcal{I}}{\mathcal{I}^{2}}=\mathcal{N}_{Y|X}^{\lor},
\]
on the other hand, in view of Remark \ref{rem:koszultor}, we have
\[
i\ua\mathcal{L}=\mathrm{Tor}_{1}^{\mathcal{O}_{Y}}\left(\mathcal{O}_{X},\mathcal{O}_{X}\right)
\]
and hence $\mathcal{N}_{Y|X}\simeq i\ua\mathcal{L}^{\lor}$. One can
show that
\[
i\da\left(\frac{\ch\left(\F\right)}{\td\left(i\ua\mathcal{L}^{\lor}\right)}\right)=\frac{\ch\left(\pi\ua\F\right)}{\td\left(\mathcal{L}^{\lor}\right)}\cdot c_{r}\left(\mathcal{L}^{\lor}\right)
\]
 (cf. \cite[Section 14.1, pag. 244]{Ful}), so that we need to show
the equality 
\[
\sum_{k=0}^{r}\left(-1\right)^{k}\ch\left(\bigwedge^{k}\mathcal{L}\right)=\frac{c_{r}\left(\mathcal{L}^{\lor}\right)}{\td\left(\mathcal{L}^{\lor}\right)},
\]
but, if $a_{1},\dots,a_{r}$ are the Chern roots of $\mathcal{L}$,
then, from Example \ref{exa:formulacheernroots}, 
\begin{align*}
\sum_{k=0}^{r}\left(-1\right)^{k}\ch\left(\bigwedge^{k}\mathcal{L}\right) & =\sum_{k=0}^{r}\left(-1\right)^{k}\left(\sum_{1\le i_{1}<\dots<i_{k}\le r}\exp\left(a_{i_{1}}+\dots+a_{i_{k}}\right)\right)\\
 & =\prod_{i=1}^{r}\left(1-\exp\left(a_{i}\right)\right)\\
 & =a_{1}\cdot\dots\cdot a_{r}\cdot\prod_{i=1}^{r}\frac{1-\exp\left(a_{i}\right)}{a_{i}}\\
 & =c_{r}\left(\mathcal{L}^{\lor}\right)\cdot\td\left(\mathcal{L}^{\lor}\right)^{-1}.
\end{align*}
\end{proof}
\begin{thm}[Hirzebruch-Riemann-Roch]\label{thm:HRR}
Let $X$ be a smooth projective variety, then the diagram
\[
\xymatrix{K_{0}\left(X\right)\ar[d]_{\chi\left(X,\bullet\right)}\ar[rr]^{\ch\left(\bullet\right)\cdot\td\left(X\right)} &  & A_{\bullet}\left(X\right)\ot\Q\ar[d]^{\int_{X}}\\
\Z\ar@{^{(}->}[rr] &  & \Q
}
\]
is commutative, that is, for every locally free coherent $\mathcal{O}_{X}$-module
$\E$, we have
\[
\chi\left(X,\E\right)=\int_{X}\ch\left(\E\right)\cdot\td\left(X\right).
\]
\end{thm}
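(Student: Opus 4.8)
The plan is to reduce the theorem to the two cases already settled: the formula on $\P^{n}$, proved above, and the Grothendieck-Riemann-Roch compatibility for the zero-section embedding into a projective completion (Lemma \ref{lem:specialcaseprojectivebundle}). Since $X$ is projective, I would first fix a closed embedding $j:X\to\P^{n}$. By Lemma \ref{lem:enoughclosedembedding} it then suffices to prove that the square relating $j_{!}$, $j\da$ and the two horizontal maps $\ch(\bullet)\cdot\td(\bullet)$ commutes; granting this, the formula for $X$ follows from the formula for $\P^{n}$ together with the functoriality $\int_{\P^{n}}j\da=\int_{X}$ of Lemma \ref{lem:pushforwardfunctorial}.

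Next, by Lemma \ref{lem:enoughtoverifythis}, commutativity of that square is guaranteed as soon as each locally free $\E$ on $X$ admits a locally free resolution $0\to\F_{r}\to\dots\to\F_{0}\to j\da\E\to0$ on $\P^{n}$ for which
\[
\sum_{k=0}^{r}\left(-1\right)^{k}\ch\left(\F_{k}\right)=j\da\left(\frac{\ch\left(\E\right)}{\td\left(\mathcal{N}\right)}\right),
\]
where $\mathcal{N}$ is the normal bundle of $X$ in $\P^{n}$. Since $\sum_{k}(-1)^{k}[\F_{k}]=j_{!}[\E]$, this is exactly the Grothendieck-Riemann-Roch identity for the closed embedding,
\[
\ch\left(j_{!}\left[\E\right]\right)=j\da\left(\frac{\ch\left(\E\right)}{\td\left(\mathcal{N}\right)}\right)\in A_{\bullet}\left(\P^{n}\right)\ot\Q.
\]
The content of Lemma \ref{lem:specialcaseprojectivebundle} is precisely that this identity holds when $j$ is the zero-section embedding $X\to\P_{X}\left(\mathcal{N}\op\mathcal{O}_{X}\right)$, so everything reduces to transporting the identity from this normal-bundle model to the actual embedding $j$.

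To perform the transport I would invoke deformation to the normal cone. Let $M=\mathrm{Bl}_{X\times\{\infty\}}\left(\P^{n}\times\P^{1}\right)$ with its projection $\rho:M\to\P^{1}$; the strict transform of $X\times\P^{1}$ gives a closed embedding into $M$ whose fibre over every finite point is the original $j:X\to\P^{n}$, while its fibre over $\infty$ is the zero-section embedding $X\to\P_{X}\left(\mathcal{N}\op\mathcal{O}_{X}\right)$ sitting in the component of $\rho^{-1}(\infty)$ disjoint from the strict transform of $\P^{n}\times\{\infty\}$. As $M$ is smooth, a global locally free resolution on $M$ of the pushforward of $\E$ along $X\times\P^{1}\to M$ exists, and restricting both the resolution and its Chern-character identity to the two fibres links the formula for $j$ to the formula for the zero section. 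Because the fibres $\rho^{-1}(t)$ are all rationally equivalent and $\ch$, $\td$ and pushforward are compatible with restriction to a fibre, the identity established on the special fibre by Lemma \ref{lem:specialcaseprojectivebundle} propagates to the general fibre, yielding the required formula for $j$ (compare \cite[Chapter 15]{Ful}).

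The hard part is this specialisation step. One must construct the deformation $M\to\P^{1}$ with care — checking that $M$ is smooth so that resolutions and Chern classes behave well, correctly identifying the special fibre $\rho^{-1}(\infty)$ and verifying that the strict transform of $X\times\P^{1}$ meets it only along the $\P_{X}\left(\mathcal{N}\op\mathcal{O}_{X}\right)$ component — and then show that the validity of the Grothendieck-Riemann-Roch identity is genuinely constant along the family. By contrast, the reduction to a closed embedding, the algebra in Lemma \ref{lem:enoughtoverifythis}, and the Koszul-complex computation behind Lemma \ref{lem:specialcaseprojectivebundle} are already in hand or entirely routine once the deformation is set up.
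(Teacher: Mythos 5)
Your proposal follows essentially the same route as the paper's proof: reduction to a closed embedding $j:X\to\P^{n}$ via Lemmas \ref{lem:enoughclosedembedding} and \ref{lem:enoughtoverifythis}, the zero-section case handled by Lemma \ref{lem:specialcaseprojectivebundle}, and deformation to the normal cone by blowing up $X\times\{\mathrm{pt}\}$ in $\P^{n}\times\P^{1}$, using rational equivalence of the fibres of $M\to\P^{1}$ to transport the Grothendieck-Riemann-Roch identity from the special fibre to the general one. The specialisation step you flag as the hard part is carried out in the paper exactly as you sketch it: one multiplies the Chern-character identity of a global resolution on $M$ by $q^{\ast}([0]-[\infty])$, kills the $\mathrm{Bl}_{X}(\P^{n})$ component, and pushes forward along the blow-down to $\P^{n}$.
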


\begin{proof}
Let $j:X\to\P^{n}$ be a closed embedding, $\E$ be a locally free
coherent $\mathcal{O}_{X}$-module. Fix a point $0\in\P^{1}$ and consider
the blowing up
\[
\pi:M=\mathrm{Bl}_{X\times\left\{ 0\right\} }\left(\P^{n}\times\P^{1}\right)\to\P^{n}\times\P^{1}
\]
together with the projection map $q:M\to\P^{1}$. For $P\neq0$ we
have $q^{-1}\left(P\right)\simeq\P^{n}$, while $q^{-1}\left(0\right)$
is the union of the exception divisor $E\simeq\P_{X}\left(\mathcal{N}_{X|\P^{n}}\op\mathcal{O}_{X}\right)$
and $Y\simeq\mathrm{Bl}_{X}\left(\P^{n}\right)$.

Let $i_E: E \to M$, $i_Y: Y \to M$ be the inclusions of the components of the fiber over $0$, and let $i_\infty: \mathbb{P}^n \to M$ be the inclusion of the fiber over $\infty$.

Note that the product
$j\times\P^{1}:X\times\P^{1}\to\P^{n}\times\P^{1}$ makes the preimage
of $X\times\left\{ 0\right\} $ locally principal, hence there exists
a unique $v:X\times\P^{1}\to M$ making the diagram
\[
\xymatrix{X\times\P^{1}\ar[rr]^{v}\ar[dr]_{j\times\P^{1}} &  & M\ar[dl]_{\pi}\ar[ddl]^{q}\\
 & \P^{n}\times\P^{1}\ar[d]\\
 & \P^{1}
}
\]
commute. It is moreover clear that over $P\neq0\in\P^{1}$ the map
$v_{P}$ gives the embedding $X\times\left\{ P\right\} \to\P^{n}\times\left\{ P\right\} $
induced by $j$, while $v_{0}$ is the embedding 
\[
v_0: X \simeq X\times\left\{ 0\right\} \to E \simeq\P_{X}\left(\mathcal{N}_{X|\P^{n}}\op\mathcal{O}_{X}\right)
\]
considered in Lemma \ref{lem:specialcaseprojectivebundle}. Denote
with $p:X\times\P^{1}\to X$ the projection, then, since $M$ is smooth
projective, there exists a locally free resolution by coherent $\mathcal{O}_{M}$-modules
\[
0\to\F_{r}\to\dots\to\F_{0}\to v\da p\ua\E\to0
\]
(note that $v\da p\ua\E$ is coherent). Let $\infty\in\P^{1}\backslash\left\{ 0\right\} $
be another point, then $\left[0\right]\sim\left[\infty\right]$ are
linearly equivalent in $\P^{1}$, therefore
\[
\sum_{k=0}^{r}\left(-1\right)^{k}\ch\left(\F_{k}\right)\cdot q\ua\left(\left[0\right]-\left[\infty\right]\right)=0
\]
in $A_{\bullet}\left(M\right)\ot\Q$. As we noted, we have
\begin{align*}
q\ua\left[0\right] & =\left[E\right]+\left[Y\right]\\
q\ua\left[\infty\right] & =\left[\P^{n}\right],
\end{align*}
where $[E]$, $[Y]$, $[\P^n]$ denote the classes of the images of $i_E$, $i_Y$, $i_\infty$ in $A_\bullet(M)$.
This gives the relation in $A_\bullet(M)\otimes\Q$:
\[
i_{Y*}\left(\sum_{k=0}^{r}\left(-1\right)^{k}\ch\left(\F_{k|Y}\right)\right)+i_{E*}\left(\sum_{k=0}^{r}\left(-1\right)^{k}\ch\left(\F_{k|E}\right)\right)=i_{\infty*}\left(\sum_{k=0}^{r}\left(-1\right)^{k}\ch\left(\F_{k|\P^{n}}\right)\right).
\]

Note that the sheaf $v\da p\ua\E$ is zero on $Y$, therefore 
\[
0\to\F_{r|Y}\to\dots\to\F_{0|Y}\to0
\]
is exact and hence $i_{Y*}\left(\sum_{k=0}^{r}\left(-1\right)^{k}\ch\left(\F_{k|Y}\right)\right)=0$. The term for $E$ is $i_{E*} \left( \ch( v_{0!} (\E) ) \right)$, since $\F_{\bullet|E}$ is a resolution of $v_{0*} \E$. By Lemma \ref{lem:specialcaseprojectivebundle}, this is $i_{E*} \left( v_{0*} \left( \dfrac{\ch(\E)}{\td(\mathcal{N}_{X|E})} \right) \right)$, where $\mathcal{N}_{X|E} \simeq \mathcal{N}_{X|\P^n}$. The term for $\P^n$ is $i_{\infty*} \left( \ch( j_! (\E) ) \right)$, since $\F_{\bullet|\P^n}$ is a resolution of $v_{\infty*} \E \simeq j_* \E$. 

The relation in $A_\bullet(M)\otimes\Q$ thus simplifies to:
\[
(i_E \circ v_0)_* \left( \frac{\ch\left(\E\right)}{\td\left(\mathcal{N}_{X|\P^{n}}\right)} \right) = i_{\infty*} \left( \ch(j_! \E) \right)
\]

Let $g: M \to \P^n$ be the projection map (il blow-down). We apply the pushforward $g_*$ to both sides of the equation.
By functoriality of the pushforward (Lemma \ref{lem:pushforwardfunctorial}) and noting that $g \circ i_E \circ v_0 = j$ (the original embedding) and $g \circ i_\infty = \mathrm{id}_{\P^n}$, we get:
\[
j_* \left( \frac{\ch\left(\E\right)}{\td\left(\mathcal{N}_{X|\P^{n}}\right)} \right) = \ch(j_! \E)
\]
which, by Proposition \ref{prop:cherntoddadditive} applied to the resolution $\F_{\bullet|\P^n}$, is exactly the relation
\[
j_* \left( \frac{\ch\left(\E\right)}{\td\left(\mathcal{N}_{X|\P^{n}}\right)} \right) = \sum_{k=0}^{r}\left(-1\right)^{k}\ch\left(\F_{k|\P^{n}}\right)
\]
required in the hypothesis of Lemma \ref{lem:enoughtoverifythis}.
\end{proof}
\newpage
\subsection{Some consequences}

\subsubsection{Explicit formulas in low dimension}
\begin{thm}[Riemann-Roch for curves]
Let $\mathcal{E}$ be locally free sheaf on $C$ of rank r, then
$$\chi(C, \mathcal{E}) = \deg(\mathcal{E}) + r \cdot (1 - g)$$
\end{thm}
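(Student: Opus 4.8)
The plan is to apply the Hirzebruch-Riemann-Roch theorem (Theorem \ref{thm:HRR}) directly to $\E$ and to exploit the fact that on a curve the Chow ring is concentrated in codimensions $0$ and $1$, so that both $\ch$ and $\td$ truncate after their linear terms. First I would compute the Chern character. Writing $\alpha_{1},\dots,\alpha_{r}$ for the Chern roots of $\E$, every product $\alpha_{i}\alpha_{j}$ lands in $A_{-1}(C)\ot\Q=0$ since $\dim C=1$, so the exponential series collapses to
\[
\ch\left(\E\right)=\sum_{i=1}^{r}\exp\left(\alpha_{i}\right)=r+\sum_{i=1}^{r}\alpha_{i}=r+c_{1}\left(\E\right).
\]

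Next I would compute the Todd class. The tangent sheaf $\mathrm{T}_{C}$ is a line bundle, so by the same truncation its Todd class is $\td\left(C\right)=\td\left(\mathrm{T}_{C}\right)=1+\tfrac{1}{2}c_{1}\left(\mathrm{T}_{C}\right)$. Multiplying the two factors and discarding the codimension-$2$ cross term $\tfrac{1}{2}c_{1}\left(\E\right)c_{1}\left(\mathrm{T}_{C}\right)$ (which vanishes on a curve), the product has codimension-$1$ component $c_{1}\left(\E\right)+\tfrac{r}{2}c_{1}\left(\mathrm{T}_{C}\right)$. Since $\int_{C}$ annihilates the codimension-$0$ part and reads off the degree of the zero-cycle, Theorem \ref{thm:HRR} together with Remark \ref{rem:degreebundle} (identifying $\deg c_{1}\left(\E\right)=\deg\left(\E\right)$) yields
\[
\chi\left(C,\E\right)=\int_{C}\ch\left(\E\right)\cdot\td\left(C\right)=\deg\left(\E\right)+\tfrac{r}{2}\deg\left(c_{1}\left(\mathrm{T}_{C}\right)\right).
\]

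The only step with genuine content beyond this formal bookkeeping is identifying $\deg\left(c_{1}\left(\mathrm{T}_{C}\right)\right)$ with $2\left(1-g\right)$; this is where the genus must enter with the correct sign and normalization. Rather than importing the external fact $\deg K_{C}=2g-2$, I would pin it down from the theorem itself by specializing to $\E=\mathcal{O}_{C}$, where $r=1$ and $c_{1}\left(\mathcal{O}_{C}\right)=0$. The formula just obtained then reads $\chi\left(C,\mathcal{O}_{C}\right)=\tfrac{1}{2}\deg\left(c_{1}\left(\mathrm{T}_{C}\right)\right)$, while on the other hand $\chi\left(C,\mathcal{O}_{C}\right)=\dim_{\Bbbk}H^{0}\left(C,\mathcal{O}_{C}\right)-\dim_{\Bbbk}H^{1}\left(C,\mathcal{O}_{C}\right)=1-g$ by definition of the (arithmetic) genus. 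Hence $\deg\left(c_{1}\left(\mathrm{T}_{C}\right)\right)=2\left(1-g\right)$, and substituting back gives
\[
\chi\left(C,\E\right)=\deg\left(\E\right)+r\left(1-g\right),
\]
as claimed. The main obstacle is thus not computational but conceptual: ensuring that all higher-degree contributions vanish on a curve and fixing the genus normalization, which the $\mathcal{O}_{C}$ specialization settles cleanly without appeal to outside results.
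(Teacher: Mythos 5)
Your proposal is correct, and the computational core (truncating $\ch(\E)=r+c_{1}(\E)$ and $\td(C)=1+\tfrac{1}{2}c_{1}(\mathrm{T}_{C})$ on a curve, killing the cross term, then integrating) is exactly what the paper does. Where you genuinely diverge is the final normalization: the paper identifies $\deg\left(c_{1}\left(\mathrm{T}_{C}\right)\right)=2-2g$ by writing $\mathrm{T}_{C}=\omega_{C}^{\lor}$ and importing the classical fact $\deg\left(\omega_{C}\right)=2g-2$, whereas you determine the same constant internally, by specializing the HRR output to $\E=\mathcal{O}_{C}$ and using $\chi\left(C,\mathcal{O}_{C}\right)=1-g$. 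Your bootstrap is legitimate and not circular: HRR gives $\chi\left(C,\E\right)=\deg\left(\E\right)+\tfrac{r}{2}\deg\left(c_{1}\left(\mathrm{T}_{C}\right)\right)$ for all $\E$, and the structure-sheaf case pins down the unknown constant. What it buys is self-containedness: the formula $\deg K_{C}=2g-2$ is classically itself a corollary of Riemann--Roch (plus Serre duality) or of Riemann--Hurwitz, so avoiding it removes a potential circularity from a survey that aims to be self-contained. The price is a definitional commitment: your argument proves the theorem with $g$ meaning the arithmetic genus $\dim_{\Bbbk}H^{1}\left(C,\mathcal{O}_{C}\right)$, together with the standard fact $H^{0}\left(C,\mathcal{O}_{C}\right)=\Bbbk$ (valid here since $C$ is reduced, irreducible and projective over algebraically closed $\Bbbk$); if one instead defines $g$ as $\dim_{\Bbbk}H^{0}\left(C,\omega_{C}\right)$, you still need Serre duality to match the two, which is the same external input the paper's route leans on. Both arguments are sound; yours trades one classical citation for a cleaner logical dependency.
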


\begin{proof}
In view of Theorem \ref{thm:HRR} we need to see that
\[ \int_C \mathrm{ch}(\mathcal{E}) \cdot \mathrm{td}(\mathrm{T}_C) = \deg(\mathcal{E}) + r \cdot (1 - g) \]
We have $A_\bullet(C) = A_1(C) \oplus A_0(C)$ with $A_0(C)\cdot A_0(C) =0$. In view of Definition \ref{defn:ChernToddcharacters} we have
\[ \mathrm{td}(\mathrm{T}_C) = 1 + \frac{1}{2}c_1(\mathrm{T}_C) , \]
Moreover since $\mathrm{T}_C = (\omega_C)^\lor$, we see that $\deg(c_1(\mathrm{T}_C)) = - \deg(\omega_C) =  2-2g$.
Again in view of Definition \ref{defn:ChernToddcharacters} $\mathrm{ch}(\mathcal{E}) = \mathrm{rank}(\mathcal{E}) + c_1(\mathcal{E}) = r + c_1(\mathcal{E})$, whence
\[\mathrm{ch}(\mathcal{E}) \cdot \mathrm{td}(\mathrm{T}_C) = (r + c_1(\mathcal{E})) \cdot \left(1 + \frac{1}{2}c_1(\mathrm{T}_C)\right) =r+c_1(\mathcal{E}) + \frac{r}{2}c_1(\mathrm{T}_C)\]
since $c_1(\mathcal{E}) \cdot \dfrac{1}{2}c_1(\mathrm{T}_C)\in A_{-1}(C)=0$. Finally we compute
\begin{align*}
\int_C \mathrm{ch}(\mathcal{E}) \cdot \mathrm{td}(\mathrm{T}_C) &= \deg\left( c_1(\mathcal{E}) + \frac{r}{2}c_1(\mathrm{T}_C) \right) \\
&= \deg(c_1(\mathcal{E})) + \frac{r}{2} \deg(c_1(\mathrm{T}_C)) \\
&= \deg(\mathcal{E}) + r(1-g).
\end{align*}
\end{proof}

\begin{thm}[Riemann-Roch for surfaces]
Let Let $\mathcal{L}\in\mathrm{Pic}\left(S\right)$ be locally free
sheaf on $S$ of rank 1, then
\[
\chi\left(S,\mathcal{L}\right)=\frac{1}{2}\mathcal{L}\cdot\left(\mathcal{L}\ot_{\mathcal{O}_{S}}\om_{S}^{\lor}\right)+\chi\left(S,\mathcal{O}_{S}\right),
\]
moreover
\[
\chi\left(S,\mathcal{O}_{S}\right)=\frac{1}{12}\left(K_{S}^{2}+\int_{S}c_{2}\left(\mathrm{T}_{S}\right)\right).
\]
\end{thm}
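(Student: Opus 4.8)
The plan is to apply the Hirzebruch--Riemann--Roch theorem (Theorem \ref{thm:HRR}) and read off the dimension-zero part of $\ch(\mathcal{L})\cdot\td(\mathrm{T}_S)$, exactly as in the curve case but pushing the expansions one codimension further. Since $\dim S=2$ we have $A_\bullet(S)=A_2(S)\oplus A_1(S)\oplus A_0(S)$, and $\int_S$ picks out the $A_0(S)$-component; thus everything reduces to computing the codimension-two term of the product.

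First I would expand the Todd class to codimension two. Writing $\alpha_1,\alpha_2$ for the Chern roots of $\mathrm{T}_S$ (Remark \ref{rem:splittingtrick}) and using $\frac{x}{1-\exp(-x)}=1+\frac{1}{2}x+\frac{1}{12}x^2+\dots$ from Definition \ref{defn:ChernToddcharacters}, a short symmetric-function computation with $\alpha_1+\alpha_2=c_1(\mathrm{T}_S)$ and $\alpha_1\alpha_2=c_2(\mathrm{T}_S)$ yields
\[
\td(\mathrm{T}_S)=1+\tfrac{1}{2}c_1(\mathrm{T}_S)+\tfrac{1}{12}\left(c_1(\mathrm{T}_S)^2+c_2(\mathrm{T}_S)\right)
\]
modulo higher-codimension terms. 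On the other side $\ch(\mathcal{L})=\exp(c_1(\mathcal{L}))=1+c_1(\mathcal{L})+\tfrac{1}{2}c_1(\mathcal{L})^2$.

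Multiplying these two expansions and collecting the codimension-two part, then integrating and using $c_1(\mathrm{T}_S)=-K_S$ (which holds since $\det\mathrm{T}_S\simeq\om_S^\lor$, see Remark \ref{rem:degreebundle} and Example \ref{exa:formulacheernroots}), I obtain
\[
\chi(S,\mathcal{L})=\tfrac{1}{2}\,c_1(\mathcal{L})\cdot\left(c_1(\mathcal{L})-K_S\right)+\tfrac{1}{12}\left(K_S^2+\int_S c_2(\mathrm{T}_S)\right).
\]
The second summand does not involve $\mathcal{L}$, so specializing to $\mathcal{L}=\mathcal{O}_S$ (where $c_1(\mathcal{L})=0$) gives the second formula at once. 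Substituting this back and rewriting $c_1(\mathcal{L})-K_S=c_1(\mathcal{L}\ot\om_S^\lor)$ turns the first summand into $\tfrac{1}{2}\,\mathcal{L}\cdot(\mathcal{L}\ot_{\mathcal{O}_S}\om_S^\lor)$, which yields the first formula.

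The only genuine obstacle is the codimension-two Todd coefficient: one must correctly combine the contribution $\tfrac{1}{12}(\alpha_1^2+\alpha_2^2)$ with the cross term $\tfrac{1}{4}\alpha_1\alpha_2$ and apply the Newton identity $\alpha_1^2+\alpha_2^2=c_1(\mathrm{T}_S)^2-2c_2(\mathrm{T}_S)$, after which the $c_2(\mathrm{T}_S)$-coefficients collapse to the clean value $\tfrac{1}{12}$. Everything else is routine bookkeeping in $A_\bullet(S)\ot\Q$, where products landing in negative-dimensional components vanish.
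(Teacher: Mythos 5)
Your proposal is correct and follows essentially the same route as the paper: apply Theorem \ref{thm:HRR}, expand $\ch(\mathcal{L})$ and $\td(\mathrm{T}_S)$ to codimension two, collect the $A_0(S)$-part, use $c_1(\mathrm{T}_S)=-K_S$, and obtain the $\chi(S,\mathcal{O}_S)$ formula by specializing to $\mathcal{L}=\mathcal{O}_S$. The only (welcome) difference is that you derive the coefficient $\tfrac{1}{12}\left(c_1(\mathrm{T}_S)^2+c_2(\mathrm{T}_S)\right)$ explicitly from the Chern roots and Newton's identity, whereas the paper simply asserts this expansion.
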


\begin{proof}
Let $D$ be a divisor such that $\mathcal{L}\simeq\mathcal{O}_{S}\left(D\right)$,
then we compute
\begin{align*}
\int_{S}\ch\left(\mathcal{L}\right)\cdot\td\left(S\right) & =\int_{S}\left(1+c_{1}\left(\mathcal{L}\right)+\frac{1}{2}c_{1}\left(\mathcal{L}\right)^{2}\right)\cdot\left(1+\frac{1}{2}c_{1}\left(\mathrm{T}_{S}\right)+\frac{1}{12}\left(c_{1}\left(\mathrm{T}_{S}\right)^{2}+c_{2}\left(\mathrm{T}_{S}\right)\right)\right)\\
 & =\int_{S}\left(1+\frac{1}{2}c_{1}\left(\mathcal{L}\right)^{2}+\frac{1}{2}c_{1}\left(\mathcal{L}\right)\cdot c_{1}\left(\mathrm{T}_{S}\right)+\frac{1}{12}\left(c_{1}\left(\mathrm{T}_{S}\right)^{2}+c_{2}\left(\mathrm{T}_{S}\right)\right)\right)\\
 & =\frac{1}{2}D^{2}-\frac{1}{2}D\cdot K_{S}+\frac{1}{12}\left(K_{S}^{2}+\int_{S}c_{2}\left(\mathrm{T}_{S}\right)\right).
\end{align*}
The last statement follows setting $\mathcal{L}=\mathcal{O}_{S}$.
\end{proof}
\newpage
\appendix
\section{Koszul complex}
\begin{defn}
Let $A$ be a commutative ring and $E$ a finitely generated $A$-module.
For an element 
\[
s\in E^{\lor}=\hom_{A}\left(E,A\right)
\]
we define $\bigwedge^{\bullet}\left(s\right)$ as the complex\footnote{\begin{xca*}
Check that this is a complex
\end{xca*}
}
\[
\dots\to\bigwedge^{n}E\overset{d_{n}}{\longrightarrow}\bigwedge^{n-1}E\to\dots\to E\overset{s}{\longrightarrow}A\to0
\]
where 
\[
d_{n}\left(e_{1}\wedge\dots\wedge e_{n}\right)=\sum_{i=1}^{n}\left(-1\right)^{i+1}s\left(e_{i}\right)e_{1}\wedge\dots\wedge\hat{e_{i}}\wedge\dots\wedge e_{n}.
\]
We call this complex the Koszul complex associated with $s$. We denote
its homology with
\[
H_{k}\left(s\right)=H_{k}\left(\bigwedge^{\bullet}\left(s\right)\right)=\frac{\ker\left(d_{k}\right)}{\mathrm{Im}\left(d_{k+1}\right)}.
\]
We say that $s\in E^{\lor}$ is regular if $H_{k}\left(s\right)=0$
for $k\neq0$.
\end{defn}

\begin{example}
\label{exa:regularsequencekoszul}Let $A$ be a commutative ring and
let $a_{1},\dots,a_{d}\in A$ with $\left(a_{1},\dots,a_{d}\right)\neq A$.
If $s$ is associated to that sequence, we have
\[
H_{0}\left(s\right)=\frac{A}{\left(a_{1},\dots,a_{s}\right)}.
\]
\end{example}

\begin{rem}
\label{rem:koszultor}Let $\bigwedge^{\bullet}\left(s\right)$ be the Koszul
complex associated with the sequence $a_{1},\dots,a_{d}$ and let
$I=\left(a_{1},\dots,a_{d}\right)$, then all the differentials in
the complex $\bigwedge^{\bullet}\left(s\right)\ot_{A}\frac{A}{I}$ vanish.
Therefore, when $\bigwedge^{\bullet}\left(s\right)$ is exact (cfr. for
example Proposition \ref{prop:propertieskoszul}), the complex $\bigwedge^{\bullet}\left(s\right)\ot_{A}\frac{A}{I}$
computes $\mathrm{Tor}_{\bullet}^{A}\left(\frac{A}{I},\frac{A}{I}\right)$.
\end{rem}

\begin{note}
Recall that, given a commutative ring $A$, a sequence of elements
$a_{1},\dots,a_{d}$ form a regular sequence if $\left(a_{1},\dots,a_{d}\right)\neq A$
and the image of $a_{i}$ is a non-zero-divisor in 
\[
\frac{A}{\left(a_{1},\dots,a_{i-1}\right)}.
\]
\end{note}

\begin{rem}
\label{rem:playingkoszul}Let $a_{1},\dots,a_{d}\in A$ with $I=\left(a_{1},\dots,a_{d}\right)$
and set
\begin{align*}
s:E=A^{d} & \to A\\
\sum c_{i}e_{i} & \mapsto\sum c_{i}a_{i}.
\end{align*}
Let now $F\su E$ the free submodule generated by the first $d-1$
elements of the sequence and let $t=s_{|F}:F\to A$. Denote with $j:F\to E$
the inclusion, then it is split (since $\mathrm{coker}\left(j\right)\simeq A$)
say by $g$, therefore we have a split injective map
\begin{align*}
\wedge^{k}\left(j\right):\bigwedge^{k}F & \to\bigwedge^{k}E\\
f_{1}\wedge\dots\wedge f_{k} & \mapsto j\left(f_{1}\right)\wedge\dots\wedge j\left(f_{k}\right)
\end{align*}
giving a (split) monomorphism
\[
0\to\bigwedge^{\bullet}\left(t\right)\to\bigwedge^{\bullet}\left(s\right),
\]
moreover the map 
\[
\bullet\wedge a_{d}:\bigwedge^{k-1}F\to\ker\left(\bigwedge^{k}E\overset{\wedge^{k}\left(g\right)}{\longrightarrow}\bigwedge^{k}F\right)
\]
can be seen to be an isomorphism and to commute with differentials,
thus giving an exact sequence
\begin{equation}
0\to\bigwedge^{\bullet}\left(t\right)\to\bigwedge^{\bullet}\left(s\right)\to\left(\bigwedge^{\bullet}\left(t\right)\right)\left[-1\right]\to0.\label{eq:exactsequencekoszul-1}
\end{equation}
Taking homology we get an exact sequence
\[
H_{d}\left(t\right)\to H_{d}\left(s\right)\to H_{d-1}\left(t\right)
\]
where $H_{d}\left(t\right)=0$ for rank reasons, hence we have an
exact sequence
\begin{equation}
0\to H_{d}\left(s\right)\to H_{d-1}\left(t\right)\overset{\delta_{d}}{\longrightarrow}H_{d-1}\left(t\right).\label{eq:exactsequencehomologyending}
\end{equation}
Tracking back the construction of the connecting homomorphism of the
Snake Lemma one can see that
\begin{align*}
\delta_{n}:H_{n-1}\left(t\right) & \to H_{n-1}\left(t\right)\\
x & \mapsto\pm a_{d}\cdot x,
\end{align*}
hence in particular $a_{d}\cdot H_{d}\left(t\right)=0$. 
\end{rem}

The connection between the Koszul complex and the properties of the
sequence defining it starts with the following\footnote{Actually the interplay between the two concepts is far tighter: see
for example \cite[Chapter IV]{Ser}, especially Sections A and B.
is done in Example \ref{exa:regularsequencekoszul}.}
\begin{prop}
\label{prop:propertieskoszul}Let $A$ be a commutative ring and let
$a_{1},\dots,a_{d}\in A$ with $I=\left(a_{1},\dots,a_{d}\right)$.
Set
\begin{align*}
s:E=A^{d} & \to A\\
\sum c_{i}e_{i} & \mapsto\sum c_{i}a_{i}
\end{align*}
and denote with $\bigwedge^{\bullet}\left(s\right)^{+}$ the complex
\[
0\to A=\bigwedge^{d}E\to\bigwedge^{d-1}E\to\dots\to E\overset{s}{\longrightarrow}A\to\frac{A}{I}\to0.
\]
Then
\begin{enumerate}
\item If $a_{1},\dots,a_{d}$ is a regular sequence, then $\bigwedge^{\bullet}\left(s\right)^{+}$
is a free resolution of $A/I$.
\item Suppose $A$ is local, noetherian, that $a_{1},\dots,a_{d}\in\m$
and that $H_{1}\left(s\right)=0$. Then $a_{1},\dots,a_{d}$ is a
regular sequence and $\bigwedge^{\bullet}\left(s\right)^{+}$ is exact.
\end{enumerate}
\end{prop}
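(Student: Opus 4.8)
The plan is to prove both statements simultaneously by induction on $d$, using the short exact sequence of Koszul complexes (\ref{eq:exactsequencekoszul-1}) together with the explicit description of its connecting homomorphism from Remark \ref{rem:playingkoszul}. Writing $t$ for the Koszul complex attached to the truncated sequence $a_{1},\dots,a_{d-1}$, the associated long exact sequence in homology reads
\[
\dots\to H_{n}\left(t\right)\overset{\pm a_{d}}{\longrightarrow}H_{n}\left(t\right)\to H_{n}\left(s\right)\to H_{n-1}\left(t\right)\overset{\pm a_{d}}{\longrightarrow}H_{n-1}\left(t\right)\to\dots,
\]
where every connecting map is multiplication by $\pm a_{d}$. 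The base case $d=1$ is immediate: the complex is $0\to A\overset{a_{1}}{\to}A\to0$, so that $H_{1}\left(s\right)=\ker\left(a_{1}\right)$ and $H_{0}\left(s\right)=A/\left(a_{1}\right)$, which settles both directions at once.

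For part (1), assume $a_{1},\dots,a_{d}$ is regular. By induction $\bigwedge^{\bullet}\left(t\right)$ resolves $A/\left(a_{1},\dots,a_{d-1}\right)$, so $H_{k}\left(t\right)=0$ for $k\ge1$ and $H_{0}\left(t\right)=A/\left(a_{1},\dots,a_{d-1}\right)$. Reading off the long exact sequence, for $n\ge2$ both neighbouring terms $H_{n}\left(t\right)$ and $H_{n-1}\left(t\right)$ vanish, forcing $H_{n}\left(s\right)=0$. The bottom stretch
\[
0=H_{1}\left(t\right)\to H_{1}\left(s\right)\to H_{0}\left(t\right)\overset{\pm a_{d}}{\longrightarrow}H_{0}\left(t\right)\to H_{0}\left(s\right)\to0
\]
then finishes the argument: regularity says precisely that $a_{d}$ is a non-zero-divisor on $H_{0}\left(t\right)=A/\left(a_{1},\dots,a_{d-1}\right)$, so the connecting map is injective, whence $H_{1}\left(s\right)=0$ and $H_{0}\left(s\right)$ is identified with the cokernel $A/I$. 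Thus $\bigwedge^{\bullet}\left(s\right)^{+}$ is a free resolution.

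For part (2), assume $A$ is local noetherian, $a_{i}\in\m$, and $H_{1}\left(s\right)=0$. The exact segment $H_{1}\left(t\right)\overset{\pm a_{d}}{\to}H_{1}\left(t\right)\to H_{1}\left(s\right)=0$ shows that multiplication by $a_{d}$ is surjective on $H_{1}\left(t\right)$. Since $A$ is noetherian, $H_{1}\left(t\right)$ is finitely generated, and as $a_{d}\in\m$ we get $\m H_{1}\left(t\right)=H_{1}\left(t\right)$; Nakayama's lemma then forces $H_{1}\left(t\right)=0$. This is exactly the hypothesis needed to apply the inductive case of part (2) to $a_{1},\dots,a_{d-1}$, so that sequence is regular and $\bigwedge^{\bullet}\left(t\right)$ resolves $A/\left(a_{1},\dots,a_{d-1}\right)$. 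The vanishing $H_{k}\left(t\right)=0$ for $k\ge1$ propagates to $H_{n}\left(s\right)=0$ for all $n\ge1$ exactly as in part (1), while the tail $0\to H_{0}\left(t\right)\overset{\pm a_{d}}{\to}H_{0}\left(t\right)\to H_{0}\left(s\right)\to0$ shows $a_{d}$ acts injectively on $A/\left(a_{1},\dots,a_{d-1}\right)$; since $a_{d}\in\m$ forces $I\ne A$, this completes the regularity of $a_{1},\dots,a_{d}$ and exhibits $\bigwedge^{\bullet}\left(s\right)^{+}$ as exact.

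The main obstacle is the inductive step of part (2): one must correctly extract from the long exact sequence that $a_{d}$ is \emph{surjective} on $H_{1}\left(t\right)$ and then invoke Nakayama. This is the only place where the local, noetherian, and $a_{i}\in\m$ hypotheses are genuinely used, and it is precisely what allows the single vanishing $H_{1}\left(s\right)=0$ to cascade into full acyclicity.
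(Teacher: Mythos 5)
Your proof is correct and follows essentially the same route as the paper: induction on $d$ using the short exact sequence (\ref{eq:exactsequencekoszul-1}) of Koszul complexes, the identification of the connecting homomorphisms with multiplication by $\pm a_{d}$, and Nakayama's lemma to force $H_{1}\left(t\right)=0$ in part (2). If anything, your write-up is slightly more complete than the paper's, which only treats the top homology $H_{d}\left(s\right)$ explicitly in part (1), whereas you spell out the vanishing of all intermediate $H_{n}\left(s\right)$ and the identification $H_{0}\left(s\right)\simeq A/I$.
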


\begin{proof}
\begin{description}
\item [{1)}] Consider $E=A^{d}$ and 
\begin{align*}
s:E & \to A\\
\sum c_{i}e_{i} & \mapsto\sum c_{i}a_{i}.
\end{align*}
We clearly have $H_{0}\left(s\right)=\frac{A}{I}$. We want to see,
by induction on $d$, that this implies that $s$ is regular: the
case $d=1$ is clear, since in that case the complex is given by
\[
0\to A\overset{\cdot a_{1}}{\longrightarrow}A\to0
\]
with 
\[
H_{1}\left(s\right)=\ker\left(\cdot a\right)=0
\]
by regularity. For greater $d$'s, looking at the exact sequence (\ref{eq:exactsequencehomologyending}),
we have $H_{d-1}\left(t\right)=0$ by induction assumption and hence
also $H_{d}\left(s\right)=0$. Therefore we've seen that a regular
sequence always gives rise to a regular element. 
\item [{2)}] Again this is clear when $d=1$, so we proceed by induction
on $d$. From the exact sequence (\ref{eq:exactsequencekoszul-1})
we get exact
\[
H_{1}\left(s\right)\to H_{1}\left(\left(\bigwedge^{\bullet}\left(t\right)\right)\left[-1\right]\right)=H_{0}\left(t\right)\overset{\delta_{1}}{\longrightarrow}H_{0}\left(t\right)
\]
where $\delta_{1}$ is just the product by $\pm a_{d}$ on $A/\left(a_{1},\dots,a_{d-1}\right)$,
therefore ur assumption that $H_{1}\left(s\right)=0$ gives that $a_{d}$
is a regular element in $A/\left(a_{1},\dots,a_{d-1}\right)$. Again
by induction hypothesis, we only need to see that $H_{1}\left(t\right)=0$,
but the connecting map
\[
H_{2}\left(\left(\bigwedge^{\bullet}\left(t\right)\right)\left[-1\right]\right)=H_{1}\left(t\right)\overset{\delta_{2}}{\longrightarrow}H_{1}\left(t\right)
\]
is just the product by $\pm a_{d}$ and $H_{1}\left(s\right)=0$ forces
it to be surjective, but then we have $a_{d}\in\m$ with $a_{d}\cdot H_{1}\left(t\right)=H_{1}\left(t\right)$.
By noetherianity $H_{1}\left(t\right)$ is finitely generated and
hence it vanishes by Nakayama's lemma. 
\end{description}
\end{proof}

\begin{lem}
\label{lem:koszulannihilatedideal}Let $A$ be a commutative ring
and let $a_{1},\dots,a_{d}\in A$ with $I=\left(a_{1},\dots,a_{d}\right)$.
set
\begin{align*}
s:E=A^{d} & \to A\\
\sum c_{i}e_{i} & \mapsto\sum c_{i}a_{i}.
\end{align*}
Then $I\cdot H_{i}\left(s\right)=0$ for every $i>0$.
\end{lem}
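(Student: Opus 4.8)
The plan is to prove the stronger statement that each generator $a_{j}$ annihilates \emph{all} of the Koszul homology, i.e. $a_{j}\cdot H_{i}\left(s\right)=0$ for every $i$, and then deduce the claim by $A$-linearity: since $I=\left(a_{1},\dots,a_{d}\right)$, any $b\in I$ is of the form $b=\sum_{j}c_{j}a_{j}$, so $b\cdot x=\sum_{j}c_{j}\left(a_{j}\cdot x\right)=0$ for every $x\in H_{i}\left(s\right)$. To kill $a_{j}$ on homology I would exhibit a contracting homotopy, i.e. show that multiplication by $a_{j}$ is null-homotopic on the complex $\bigwedge^{\bullet}\left(s\right)$; since a null-homotopic chain map induces the zero map on homology, this is exactly what is needed.

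Concretely, fix $j$ and introduce the degree-raising operator
\[
h:\bigwedge^{k}E\to\bigwedge^{k+1}E,\qquad\omega\mapsto e_{j}\wedge\omega.
\]
The key step is to verify the Cartan homotopy identity
\[
d_{k+1}\circ h+h\circ d_{k}=a_{j}\cdot\mathrm{id}_{\bigwedge^{k}E}
\]
for every $k$. This is precisely the statement that the contraction differential $d$ is an antiderivation of the exterior algebra with $d\left(e_{j}\right)=s\left(e_{j}\right)=a_{j}$: expanding, one finds $d\left(e_{j}\wedge\omega\right)=a_{j}\,\omega-e_{j}\wedge d\left(\omega\right)$, which rearranges into the displayed formula. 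I would check this either abstractly (the Leibniz rule for $d$ on $\bigwedge^{\bullet}E$) or, to stay self-contained, directly from the explicit expression for $d_{n}$ given in the Appendix, grouping the terms of $d_{k+1}\left(e_{j}\wedge\omega\right)$ into the single term where $e_{j}$ is contracted (yielding $a_{j}\omega$) and the remaining terms (yielding $-e_{j}\wedge d_{k}\omega$).

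The only real obstacle is the sign bookkeeping in this direct verification: one must track the alternating signs $\left(-1\right)^{i+1}$ when $e_{j}$ is prepended to $\omega$, and confirm that the contributions not involving the contraction of $e_{j}$ reassemble exactly into $h\left(d_{k}\omega\right)$ with the correct sign, so that they cancel. This is routine but must be done carefully. Once the identity is established for each $j$, null-homotopy gives $a_{j}\cdot H_{i}\left(s\right)=0$ for all $i$ and all $j$, and the $A$-linearity argument above yields $I\cdot H_{i}\left(s\right)=0$. I note that this recovers and uniformizes the partial computation in Remark \ref{rem:playingkoszul}, where the connecting homomorphism was identified with multiplication by $\pm a_{d}$ and only the single relation $a_{d}\cdot H_{d}\left(t\right)=0$ was extracted.
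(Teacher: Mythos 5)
Your proof is correct, and it takes a genuinely different route from the paper's. The paper argues by induction on $d$: from the short exact sequence of Koszul complexes in Remark \ref{rem:playingkoszul} it extracts, for each $j>0$, an exact sequence
\[
0\to\frac{H_{j}\left(t\right)}{a_{d}\cdot H_{j}\left(t\right)}\to H_{j}\left(s\right)\to\mathrm{Ann}_{H_{j-1}\left(t\right)}\left(a_{d}\right)\to0,
\]
where $t$ is the Koszul map for $a_{1},\dots,a_{d-1}$, and then concludes because both outer terms are annihilated by $I$. Your argument is instead the classical homotopy computation: the antiderivation identity $d\left(e_{j}\wedge\omega\right)=a_{j}\omega-e_{j}\wedge d\left(\omega\right)$ (which does come out with exactly the signs you describe, given the differential defined in the Appendix) yields $d\circ h+h\circ d=a_{j}\cdot\mathrm{id}$, so multiplication by $a_{j}$ is null-homotopic, hence zero on all homology, and annihilation by $I$ follows by linearity. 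The comparison is instructive, and in fact favors you. First, your route is sharper where it matters: the paper's closing deduction relies on the principle that an extension of two modules annihilated by $I$ is itself annihilated by $I$, which is false in general (consider $0\to\Z/p\to\Z/p^{2}\to\Z/p\to0$ with $I=\left(p\right)$); as written, the induction only yields that a power of $I$ kills $H_{j}\left(s\right)$, whereas the homotopy delivers the full statement with no loss. Second, your route is more general: nothing in the computation uses that $E$ is free or that the $a_{j}$ come from a basis, so the same identity shows $s\left(e\right)\cdot H_{i}\left(s\right)=0$ for every $e\in E$ and arbitrary $s\in E^{\lor}$, i.e. the whole image ideal $\mathrm{Im}\left(s\right)$ annihilates Koszul homology --- exactly the form needed when the construction is globalized in Proposition \ref{prop:zerolocusimage}. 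What the paper's approach buys is only economy of means, since it recycles the mapping-cone machinery of Remark \ref{rem:playingkoszul} already developed for Proposition \ref{prop:propertieskoszul}; but given the gap just noted, your proof is the one that should be recorded. The single step you defer --- the sign bookkeeping --- is precisely the one-line Leibniz computation displayed above, so there is no real obstacle remaining.
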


\begin{proof}
We proceed by induction on $d$, the case $d=1$ being clear. For
$d>1$, in view of Remark \ref{rem:playingkoszul} we have exact sequences
\[
0\to\frac{H_{j}\left(t\right)}{a_{d}\cdot H_{j}\left(t\right)}\to H_{j}\left(s\right)\to\mathrm{Ann}_{H_{j-1}\left(t\right)}\left(a_{d}\right)\to0
\]
and we conclude since both edges are annihilated by $I$.
\end{proof}
Of course everything may be globalized by patching
\begin{defn}
Given a projective variety $X$ and a locally free coherent $\mathcal{O}_{X}$-module
$\E$ of rank $r$, for any element $s\in H^{0}\left(X,\E\right)$,
denoting with $\mathcal{I}\su\mathcal{O}_{X}$ the image of $s^{\lor}:\E^{\lor}\to\mathcal{O}_{X}$,
we can associate a complex
\[
\bigwedge^{\bullet}\left(s\right):\quad0\to\bigwedge^{r}\E^{\lor}\to\dots\to\E^{\lor}\to\mathcal{O}_{X}\to\frac{\mathcal{O}_{X}}{\mathcal{I}}\to0.
\]
\end{defn}

\begin{prop}
\label{prop:zerolocusimage}Given a smooth projective variety $X$
and a locally free coherent $\mathcal{O}_{X}$-module $\E$ of rank $r$, for
any element $s\in H^{0}\left(X,\E\right)$ the homology 
\[
H_{i}\left(s\right)=\mathcal{H}_{i}\left(\bigwedge^{\bullet}\left(s\right)\right)
\]
has support on the zero locus $Z\left(\mathrm{Im}\left(s^{\lor}\right)\right)$.
\end{prop}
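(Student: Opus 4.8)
The plan is to reduce the assertion to the purely algebraic annihilation statement already proved in Lemma~\ref{lem:koszulannihilatedideal}. Since the support of a coherent sheaf is the (closed) locus where its stalk is nonzero, and each $\mathcal{H}_i\bigl(\bigwedge^\bullet(s)\bigr)$ is coherent on the noetherian scheme $X$, it suffices to show that the stalk $H_i(s)_x$ vanishes at every point $x\notin Z(\mathrm{Im}(s^\lor))$. Writing $\mathcal{I}=\mathrm{Im}(s^\lor)\subseteq\mathcal{O}_X$, such a point is exactly one with $\mathcal{I}_x=\mathcal{O}_{X,x}$. The one structural input I would use is that homology commutes with localization, so that $H_i(s)_x$ is computed by the stalk of the Koszul complex.

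First I would pass to a trivializing affine open $U=\spec A$ on which $\E|_U\cong\mathcal{O}_U^{r}$, and hence $\E^\lor|_U\cong\mathcal{O}_U^{r}$ as well. Over such a $U$ the dual section $s^\lor\colon\E^\lor\to\mathcal{O}_X$ is given by a sequence $a_1,\dots,a_r\in A$, the ideal $\mathcal{I}|_U$ is the sheaf associated to $I=(a_1,\dots,a_r)$, and $\bigwedge^\bullet(s)|_U$ is the sheafification of the algebraic Koszul complex of that sequence. Lemma~\ref{lem:koszulannihilatedideal} then yields $I\cdot H_i(s)|_U=0$ for every $i>0$, and since these statements are compatible on overlaps they glue to the global annihilation $\mathcal{I}\cdot H_i(s)=0$ for all $i>0$.

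The conclusion is then immediate: a coherent sheaf killed by $\mathcal{I}$ has support contained in $Z(\mathcal{I})=Z(\mathrm{Im}(s^\lor))$, because at a point $x$ outside this set one has $1\in\mathcal{I}_x$, so $\mathcal{I}_x\cdot H_i(s)_x=0$ forces $H_i(s)_x=0$ for $i>0$. The low-degree homology requires no annihilation argument at all: the map $\mathcal{O}_X\to\mathcal{O}_X/\mathcal{I}$ is surjective, so the homology at that spot vanishes, while at the $\mathcal{O}_X$ spot the homology is $\mathcal{I}/\mathrm{Im}(s^\lor)=0$ by the very definition of $\mathcal{I}$; the only potentially nonzero homology sheaves live in the exterior-power part of the complex, which is precisely where Lemma~\ref{lem:koszulannihilatedideal} applies.

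I do not expect a genuine obstacle here, since all the real content was isolated in Lemma~\ref{lem:koszulannihilatedideal}. The only care needed is bookkeeping: checking that the local trivializations of $\E$ patch so that the affine annihilation statements glue to a statement about the honest sheaves $H_i(s)$ (routine, as $H_i(s)$ is itself defined by patching), and keeping the indexing straight so that the augmentation term $\mathcal{O}_X/\mathcal{I}$ is treated separately from the higher Koszul homology.
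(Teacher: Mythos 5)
Your proof is correct and follows essentially the same route as the paper: both reduce, via local trivialization of $\E$, to the annihilation statement of Lemma~\ref{lem:koszulannihilatedideal}, which forces the Koszul homology to vanish at any point where $\mathcal{I}$ generates the unit ideal. Your write-up is simply a more careful unpacking of the paper's one-line argument, with the added (and welcome) bookkeeping that the homology at the $\mathcal{O}_X$ and $\mathcal{O}_X/\mathcal{I}$ spots vanishes identically.
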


\begin{proof}
Everything boils down to showing that, if a sequence $a_{1},\dots,a_{d}$
in a ring $A$ gives $\left(a_{1},\dots,a_{d}\right)=A$, then the
corresponding Koszul complex is exact, but this follows from Lemma
\ref{lem:koszulannihilatedideal}.
\end{proof}
\newpage

\nocite{*}
\bibliographystyle{cas-model2-names}
\bibliography{referencesRiemannRoch}

\end{document}